\providecommand{\U}[1]{\protect\rule{.1in}{.1in}}
\numberwithin{equation}{section}
\newtheorem{theorem}{Theorem}[section]
\newtheorem{corollary}[theorem]{Corollary}
\newtheorem{lemma}[theorem]{Lemma}
\newtheorem{proposition}[theorem]{Proposition}
\newtheorem{remark}[theorem]{Remark}
\newenvironment{proof}[1][Proof]{\noindent\textbf{#1.} }{\ \rule{0.5em}{0.5em}}
\begin{document}

\title{Second-Order $\Gamma$-Limit for the Cahn--Hilliard Functional with Dirichlet
Boundary Conditions, II}
\author{Irene Fonseca\\Department of Mathematical Sciences,\\Carnegie Mellon University,\\Pittsburgh PA 15213-3890, USA
\and Leonard Kreutz\\Department of Mathematics, Technical University of Munich \\Munich, 80333, Germany
\and Giovanni Leoni\\Department of Mathematical Sciences, \\Carnegie Mellon University, \\Pittsburgh PA 15213-3890, USA}
\maketitle

\begin{abstract}
This paper continues the study of the asymptotic development of order 2 by
$\Gamma$ -convergence of the Cahn--Hilliard \ functional with Dirichlet
boundary conditions initiated in \cite{fonseca-kreutz-leoni2025I}. While in
the first paper, the Dirichlet data are assumed to be well separated from one
of the two wells, here this is no longer the case. In the case where there are
no interfaces, it is shown that there is a transition layer near the boundary
of the domain.

\end{abstract}

\section{Introduction}

In a recent paper \cite{fonseca-kreutz-leoni2025I} we began the study of the
second-order asymptotic development via $\Gamma$-convergence of the Cahn-Hilliard
functional
\begin{equation}
F_{\varepsilon}(u):=\int_{\Omega}(W(u)+\varepsilon^{2}|\nabla u|^{2}%
)\,dx,\quad u\in H^{1}(\Omega), \label{functional cahn-hilliard}%
\end{equation}
subject to the Dirichlet boundary condition%
\begin{equation}
\operatorname*{tr}u=g_{\varepsilon}\quad\text{on }\partial\Omega.
\label{dirichlet boundary conditions}%
\end{equation}
Here $W:\mathbb{R}\rightarrow\lbrack0,\infty)$ is a double-well potential
with
\begin{equation}
W^{-1}(\{0\})=\{a,b\}, \label{wells}%
\end{equation}
$\Omega\subset\mathbb{R}^{N}$ is an open, bounded set with a smooth boundary,
$N\geq2$, and $g_{\varepsilon}\in H^{1/2}(\partial\Omega)$.

We recall that, given a metric space $X$ and a family of functions
$\mathcal{F}_{\varepsilon}:X\rightarrow\lbrack-\infty,\infty]$ for
$\varepsilon>0$, \emph{the asymptotic development of order} $n$ \emph{via
$\Gamma$-convergence}, written as%
\begin{equation}
\mathcal{F}_{\varepsilon}=\mathcal{F}^{(0)}+\varepsilon\mathcal{F}%
^{(1)}+\cdots+\varepsilon^{n}\mathcal{F}^{(n)}+o(\varepsilon^{n}%
),\label{asymptotic development}%
\end{equation}
holds if we can find functions $\mathcal{F}^{(i)}:X\rightarrow\lbrack
-\infty,\infty]$, $i=0,\ldots,n$, such that the functions
\[
\mathcal{F}_{\varepsilon}^{(i)}:=\frac{\mathcal{F}_{\varepsilon}^{(i-1)}%
-\inf_{X}\mathcal{F}^{(i-1)}}{\varepsilon}%
\]
are well-defined and the family $\{\mathcal{F}_{\varepsilon}^{(i)}%
\}_{\varepsilon}$ $\Gamma$-converges to $\mathcal{F}^{(i)}$ as $\varepsilon
\rightarrow0^{+}$ (see \cite{anzellotti-baldo1993} and
\cite{anzellotti-baldo-orlandi1996}). In many cases, the powers
$\varepsilon^{k}$ in the asymptotic development (\ref{asymptotic development})
may be replaced by more general scales, where $\delta_{\varepsilon}^{(i)}>0$
for all $i=1,\ldots,m$ and $\varepsilon>0$, $\delta_{\varepsilon}^{(0)}:=1$
and $\sigma_{\varepsilon}^{(i)}:=\delta_{\varepsilon}^{(i)}/\delta
_{\varepsilon}^{(i-1)}\rightarrow0$ as $\varepsilon\rightarrow0^{+}$ for all
$i=1,\ldots,m$, and the asymptotic expansion takes the form:%
\[
\mathcal{F}_{\varepsilon}=\mathcal{F}^{(0)}+\delta_{\varepsilon}%
^{(1)}\mathcal{F}^{(1)}+\cdots+\delta_{\varepsilon}^{(n)}\mathcal{F}%
^{(n)}+o(\delta_{\varepsilon}^{(n)}),
\]
where the functions $\mathcal{F}_{\varepsilon}^{(i)}$ are defined by
\[
\mathcal{F}_{\varepsilon}^{(i)}:=\frac{\mathcal{F}_{\varepsilon}^{(i-1)}%
-\inf_{X}\mathcal{F}^{(i-1)}}{\sigma_{\varepsilon}^{(i)}}.
\]

The first order asymptotic development of (\ref{functional cahn-hilliard}),
(\ref{dirichlet boundary conditions}) was studied by Owen, Rubinstein, and
Sternberg \cite{owen-rubinstein-sternberg1990} (see also
\cite{cristoferi-gravina2021} and \cite{gazoulis2024}), who proved that the
family of functionals
\[
\mathcal{F}_{\varepsilon}^{(1)}(u)=\left\{
\begin{array}
[c]{ll}%
\int_{\Omega}(\frac{1}{\varepsilon}W(u)+\varepsilon|\nabla u|^{2})\,dx &
\text{if }u\in H^{1}(\Omega),\,\operatorname*{tr}u=g_{\varepsilon}\text{ on
}\partial\Omega,\\
\infty & \text{otherwise in }L^{1}(\Omega),
\end{array}
\right.
\]
$\Gamma$-converges as $\varepsilon\rightarrow0^{+}$ in $L^{1}(\Omega)$ to%
\begin{equation}
\mathcal{F}^{(1)}(u):=\left\{
\begin{array}
[c]{ll}%
C_{W}\operatorname*{P}(\{u=b\};\Omega)+\int_{\partial\Omega}\operatorname*{d}%
\nolimits_{W}(\operatorname*{tr}u,g)\,d\mathcal{H}^{N-1} & \text{if }u\in
BV(\Omega;\{a,b\}),\\
\infty & \text{otherwise in }L^{1}(\Omega),
\end{array}
\right.  \label{firstOrderFormalDefinition}%
\end{equation}
where $\operatorname*{P}(\{u=b\};\Omega)$ is the perimeter of the set
$\{u=b\}$ in $\Omega$, $g_{\varepsilon}\rightarrow g$ in $L^{1}(\partial
\Omega)$, $\operatorname*{d}\nolimits_{W}$ is the geodesic distance determined, to be precise, 
by $W$:
\begin{equation}
\operatorname*{d}\nolimits_{W}(r,s):=\left\{
\begin{array}
[c]{ll}%
2\left\vert \int_{r}^{s}W^{1/2}(\rho)\,d\rho\right\vert  & \text{if }%
r\in\{a,b\}\text{ or }s\in\{a,b\},\\
\infty & \text{otherwise,}%
\end{array}
\right.  \label{distance definition}%
\end{equation}
and
\begin{equation}
C_{W}:=2\int_{a}^{b}W^{1/2}(\rho)\,d\rho. \label{cW definition}%
\end{equation}

In \cite{fonseca-kreutz-leoni2025I}, we studied the second-order asymptotic
expansion of (\ref{functional cahn-hilliard}),
(\ref{dirichlet boundary conditions}) under the hypothesis that the boundary
data $g_{\varepsilon}:\overline{\Omega}\rightarrow\mathbb{R}$ stay away from
one of the two wells $a$, $b$:%
\begin{equation}
a<\alpha_{-}\leq g_{\varepsilon}(x)\leq b \label{bounds g}%
\end{equation}
for all $x\in\overline{\Omega}$, all $\varepsilon\in(0,1)$, and some constant
$\alpha_{-}$. If the constant $\alpha_{-}$ is sufficiently close to $b$, the
only minimizer of $\mathcal{F}^{(1)}$ is the constant function $b$ (see
\cite[Proposition 2.5]{fonseca-kreutz-leoni2025I}). Hence, it is natural to assume that
\begin{equation}
u_{0}\equiv b\quad\text{is the unique minimizer of }\mathcal{F}^{(1)}\text{.}
\label{u0=b}%
\end{equation}
Under this hypothesis, we define%
\begin{align}
&  \mathcal{F}_{\varepsilon}^{(2)}(u):=\frac{\mathcal{F}_{\varepsilon}%
^{(1)}(u)-\min\mathcal{F}^{(1)}}{\varepsilon}\label{F 2 epsilon a<aplha}\\
&  =\int_{\Omega}\left(  \frac{1}{\varepsilon^{2}}W(u)+|\nabla u|^{2}\right)
\,dx-\frac{1}{\varepsilon}\int_{\partial\Omega}\operatorname*{d}%
\nolimits_{W}(b,g)\,d\mathcal{H}^{N-1}\nonumber
\end{align}
if $u\in H^{1}(\Omega)$ and $\operatorname*{tr}u=g_{\varepsilon}$ on
$\partial\Omega$, and $\mathcal{F}_{\varepsilon}^{(2)}(u):=\infty$ otherwise
in $L^{1}(\Omega)$.

The main result in \cite{fonseca-kreutz-leoni2025I} is the following theorem.

\begin{theorem}
Let $\Omega\subset\mathbb{R}^{N}$ be an open, bounded, connected set with a boundary of class $C^{2,d}\ $, $0<d\leq1$. Assume that $W$ satisfies \eqref{W_Smooth}-\eqref{W' three zeroes} and that
$g_{\varepsilon}\in H^{1}(\partial\Omega)$ is such that%
\[
(\varepsilon|\log\varepsilon|)^{1/2}\int_{\partial\Omega}|\nabla_{\tau
}g_{\varepsilon}|^{2}d\mathcal{H}^{N-1}=o(1)\quad\text{as }\varepsilon
\rightarrow0^{+},
\]
and
\[
|g_{\varepsilon}(x)-g(x)|\leq C\varepsilon^{\gamma},\quad x\in\partial\Omega,
\]
for all $\varepsilon\in(0,1)$ and for some constants $C>0$ and $\gamma>1$.
Suppose also that \eqref{u0=b} holds. Then
\begin{equation}
\mathcal{F}^{(2)}(u)=\int_{\partial\Omega}\kappa(y)\int_{0}^{\infty}%
2W^{1/2}(z_{g(y)}(s))z_{g(y)}^{\prime}(s)s\,ds\, d\mathcal{H}^{N-1}(y)
\label{second order}%
\end{equation}
if $u=b$ and $\mathcal{F}^{(2)}(u)=\infty$ otherwise in $L^{1}(\Omega)$, where
$\kappa$ is the mean curvature of $\partial\Omega$ and $z_{\alpha}$ is the
solution to the Cauchy problem
\begin{equation}
\left\{
\begin{array}
[c]{l}%
z_{\alpha}^{\prime}=W^{1/2}(z_{\alpha}),\\
z_{\alpha}(0)=\alpha
\end{array}
\right.  \label{cauchy problem z alpha}%
\end{equation}
with $\alpha=g(y)$.
\end{theorem}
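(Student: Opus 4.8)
The plan is to establish the two $\Gamma$-convergence inequalities, after the preliminary reduction that any sequence $u_{\varepsilon}\to u$ in $L^{1}(\Omega)$ with $\liminf_{\varepsilon}\mathcal{F}^{(2)}_{\varepsilon}(u_{\varepsilon})<\infty$ must satisfy $u=b$: indeed $\mathcal{F}^{(1)}_{\varepsilon}(u_{\varepsilon})-\min\mathcal{F}^{(1)}=\varepsilon\,\mathcal{F}^{(2)}_{\varepsilon}(u_{\varepsilon})\to0$, so $u_{\varepsilon}$ is a minimizing sequence for $\mathcal{F}^{(1)}_{\varepsilon}$, and the first-order $\Gamma$-convergence of \cite{owen-rubinstein-sternberg1990} together with \eqref{u0=b} and Modica--Mortola compactness forces $u_{\varepsilon}\to b$; this makes $\mathcal{F}^{(2)}(u)=+\infty$ for $u\neq b$ automatic for the $\liminf$ (the $\limsup$ being trivial there) and reduces everything to $u=b$. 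It is convenient to rewrite the target constant: integrating by parts and using $2W^{1/2}(z_{\alpha})z_{\alpha}'=-\tfrac{d}{ds}\operatorname{d}_W(b,z_{\alpha}(\cdot))$ together with $\operatorname{d}_W(b,z_{\alpha}(s))\le Ce^{-cs}$ (from $W''(b)>0$), one gets $\int_{0}^{\infty}2W^{1/2}(z_{\alpha}(s))z_{\alpha}'(s)\,s\,ds=\int_{0}^{\infty}\operatorname{d}_W(b,z_{\alpha}(s))\,ds$, so that $\mathcal{F}^{(2)}(b)=\int_{\partial\Omega}\kappa(y)\int_{0}^{\infty}\operatorname{d}_W(b,z_{g(y)}(s))\,ds\,d\mathcal{H}^{N-1}(y)$.

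\emph{Upper bound.} Let $t(x):=\operatorname{dist}(x,\partial\Omega)$ and $\pi(x)$ the nearest point on $\partial\Omega$, both smooth on a tubular neighbourhood $\{t<\delta_{0}\}$; there the volume element is $\bigl(1+\kappa(y)t+O(t^{2})\bigr)\,dt\,d\mathcal{H}^{N-1}(y)$ (its first-order coefficient equals $\Delta t$ on $\partial\Omega$, the curvature in \eqref{second order}) and $|\nabla t|\equiv1$. Fixing $L_{\varepsilon}\to\infty$ with $\varepsilon L_{\varepsilon}\to0$ sufficiently fast and a cutoff $\eta_{\varepsilon}$ equal to $1$ on $[0,\varepsilon L_{\varepsilon}]$ and to $0$ beyond $2\varepsilon L_{\varepsilon}$, with $|\eta_{\varepsilon}'|\le C/(\varepsilon L_{\varepsilon})$, I would take $u_{\varepsilon}(x):=\eta_{\varepsilon}(t(x))\,z_{g_{\varepsilon}(\pi(x))}\!\bigl(t(x)/\varepsilon\bigr)+\bigl(1-\eta_{\varepsilon}(t(x))\bigr)b$ on $\{t<\delta_{0}\}$ and $u_{\varepsilon}\equiv b$ elsewhere, so $u_{\varepsilon}\in H^{1}(\Omega)$, $\operatorname{tr}u_{\varepsilon}=g_{\varepsilon}$, and $u_{\varepsilon}\to b$ in $L^{1}$. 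On $\{t<\varepsilon L_{\varepsilon}\}$, since $\nabla t$ is orthogonal to the level sets, the density splits as $\tfrac1{\varepsilon^{2}}\bigl[W(z_{g_{\varepsilon}(y)})+(z_{g_{\varepsilon}(y)}')^{2}\bigr]+|\partial_{\alpha}z_{g_{\varepsilon}(y)}|^{2}|\nabla_{\tau}g_{\varepsilon}|^{2}=\tfrac2{\varepsilon^{2}}W\bigl(z_{g_{\varepsilon}(y)}(t/\varepsilon)\bigr)+|\partial_{\alpha}z_{g_{\varepsilon}(y)}(t/\varepsilon)|^{2}|\nabla_{\tau}g_{\varepsilon}(y)|^{2}$. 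Rescaling $t=\varepsilon s$ and expanding the volume element: the leading $\tfrac1\varepsilon$-term produces $\tfrac1\varepsilon\int_{\partial\Omega}\int_{0}^{\infty}2W(z_{g_{\varepsilon}(y)}(s))\,ds\,d\mathcal{H}^{N-1}=\tfrac1\varepsilon\int_{\partial\Omega}\operatorname{d}_W(b,g_{\varepsilon})\,d\mathcal{H}^{N-1}$, which cancels the subtracted term up to $O(\varepsilon^{\gamma-1})=o(1)$ by $|g_{\varepsilon}-g|\le C\varepsilon^{\gamma}$; the $O(t)$-term of the volume element yields $\int_{\partial\Omega}\kappa(y)\int_{0}^{\infty}2W(z_{g(y)}(s))\,s\,ds\,d\mathcal{H}^{N-1}(y)=\mathcal{F}^{(2)}(b)$ in the limit (the integral converging by exponential decay); the tangential term is $\le C\varepsilon\int_{\partial\Omega}|\nabla_{\tau}g_{\varepsilon}|^{2}\,d\mathcal{H}^{N-1}=o(1)$ using $\|\partial_{\alpha}z_{\alpha}\|_{L^{2}(0,\infty)}\le C$ (from $W'/2W^{1/2}\to-\sqrt{W''(b)/2}$ near $b$) and the hypothesis on $g_{\varepsilon}$; and the contribution from the cutoff layer, where $|u_{\varepsilon}-b|\le Ce^{-cL_{\varepsilon}}$ and hence the density is $\le C e^{-2cL_{\varepsilon}}\bigl(\varepsilon^{-2}+|\nabla_{\tau}g_{\varepsilon}|^{2}\bigr)$, is $o(1)$ once $L_{\varepsilon}=C|\log\varepsilon|$ with $C$ large. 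Hence $\limsup_{\varepsilon}\mathcal{F}^{(2)}_{\varepsilon}(u_{\varepsilon})\le\mathcal{F}^{(2)}(b)$.

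\emph{Lower bound.} Let $u_{\varepsilon}\to b$ in $L^{1}$ with $\sup_{\varepsilon}\mathcal{F}^{(2)}_{\varepsilon}(u_{\varepsilon})<\infty$; truncating (using that $W$ is monotone outside $[a,b]$) we may assume $u_{\varepsilon}$ is valued in a fixed interval, so $\operatorname{d}_W(b,u_{\varepsilon})$ is bounded and Lipschitz in $u_{\varepsilon}$. Fixing a cutoff $\eta$ on $[0,\delta_{0})$ with $\eta\equiv1$ near $0$ and using the exact identity, valid a.e.\ where $|\nabla t|=1$,
\[
\frac1{\varepsilon^{2}}W(u)+|\nabla u|^{2}=\Bigl|\tfrac1\varepsilon W^{1/2}(u)\nabla t-\nabla u\Bigr|^{2}+\frac1\varepsilon\nabla\bigl(\psi(u)\bigr)\cdot\nabla t,\qquad \psi(r):=2\!\int_{a}^{r}\!W^{1/2},
\]
together with $\psi(u)=C_{W}-\operatorname{d}_W(b,u)$ and an integration by parts (the boundary term over $\{t=\delta_{0}\}$ vanishing since $\eta(\delta_{0})=0$, and $\nabla t\cdot\nu=-1$ on $\partial\Omega$), one obtains, after cancelling the subtracted term against the boundary term $\tfrac1\varepsilon\int_{\partial\Omega}\operatorname{d}_W(b,g_{\varepsilon})\,d\mathcal{H}^{N-1}$ up to $O(\varepsilon^{\gamma-1})$,
\[
\mathcal{F}^{(2)}_{\varepsilon}(u_{\varepsilon})\ \ge\ \int_{\Omega}\eta(t)\Bigl|\tfrac1\varepsilon W^{1/2}(u_{\varepsilon})\nabla t-\nabla u_{\varepsilon}\Bigr|^{2}dx\ +\ \frac1\varepsilon\int_{\Omega}\operatorname{d}_W(b,u_{\varepsilon})\bigl(\eta'(t)+\eta(t)\Delta t\bigr)\,dx\ +\ o(1).
\]
Since $\int_{\Omega}2W^{1/2}(u_{\varepsilon})|\nabla u_{\varepsilon}|\le\varepsilon\,\mathcal{F}^{(2)}_{\varepsilon}(u_{\varepsilon})+\int_{\partial\Omega}\operatorname{d}_W(b,g)=O(1)$, the family $\tfrac1\varepsilon\operatorname{d}_W(b,u_{\varepsilon})$ is bounded in $L^{1}(\Omega)$; combined with the vanishing of the localized first-order energy in the interior (which forces $\operatorname{d}_W(b,u_{\varepsilon})=o(\varepsilon)$ in $L^{1}_{\mathrm{loc}}(\Omega)$, as $\operatorname{d}_W(b,\cdot)\lesssim W(\cdot)$ near $b$ and there are no interior interfaces), a subsequence gives $\tfrac1\varepsilon\operatorname{d}_W(b,u_{\varepsilon})\rightharpoonup\mu$ weakly-$\ast$, $\mu\ge0$ supported on $\partial\Omega$. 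The crucial step is \emph{rigidity}: boundedness of the nonnegative first term forces, after $t=\varepsilon s$, $W^{1/2}\bigl(u_{\varepsilon}(y,\varepsilon\,\cdot)\bigr)-\partial_{s}\bigl(u_{\varepsilon}(y,\varepsilon\,\cdot)\bigr)\to0$ in $L^{2}(\partial\Omega\times(0,\infty))$, and since $u_{\varepsilon}(y,0)=g_{\varepsilon}(y)\to g(y)$ this pins the blow-up profile at $\mathcal{H}^{N-1}$-a.e.\ $y$ to the unique solution $z_{g(y)}$ of \eqref{cauchy problem z alpha}; hence $\mu$ has density $y\mapsto\int_{0}^{\infty}\operatorname{d}_W(b,z_{g(y)}(s))\,ds$, and since $\eta'(0)=0$ and $\Delta t=\kappa$ on $\partial\Omega$ the second term converges to $\int_{\partial\Omega}\kappa(y)\int_{0}^{\infty}\operatorname{d}_W(b,z_{g(y)}(s))\,ds\,d\mathcal{H}^{N-1}(y)$. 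Discarding the nonnegative first term and invoking the identity from the first paragraph gives $\liminf_{\varepsilon}\mathcal{F}^{(2)}_{\varepsilon}(u_{\varepsilon})\ge\mathcal{F}^{(2)}(b)$.

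I expect the rigidity step, intertwined with the $\tfrac1\varepsilon$-order cancellations, to be the main obstacle. It must be made quantitative enough not just to bound but to \emph{identify} the weak-$\ast$ limit $\mu$ — requiring uniform-in-$\varepsilon$ decay of $\operatorname{d}_W(b,u_{\varepsilon})$ in the normal direction so that the rescaled slice integrals $\int_{0}^{\delta_{0}/\varepsilon}\operatorname{d}_W(b,u_{\varepsilon}(y,\varepsilon s))\,ds$ pass to the limit, plus equi-integrability in $y\in\partial\Omega$ — and one must ensure the leading-order terms cancel to exactly $o(1)$, which is precisely where $|g_{\varepsilon}-g|\le C\varepsilon^{\gamma}$ with $\gamma>1$ and $(\varepsilon|\log\varepsilon|)^{1/2}\int_{\partial\Omega}|\nabla_{\tau}g_{\varepsilon}|^{2}\,d\mathcal{H}^{N-1}=o(1)$ are used. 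Secondary technical points are justifying the tubular-coordinate computations and the integration by parts for merely $H^{1}$ competitors (after truncation), the non-interaction of the $O(\varepsilon|\log\varepsilon|)$-wide boundary layers with the cut locus of $\partial\Omega$, and the interior estimate placing $\mu$ on $\partial\Omega$.
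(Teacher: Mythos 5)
Your recovery-sequence construction and the $u=b$ reduction are fine and essentially follow the route used in the paper (profile in normal coordinates, slicing with the Jacobian weight, exponential tails), so the issue is entirely in the lower bound. There the argument has a genuine gap at its logical base: the claim that $\tfrac1\varepsilon\operatorname{d}_W(b,u_\varepsilon)$ is bounded in $L^1(\Omega)$ does not follow from $\int_\Omega 2W^{1/2}(u_\varepsilon)|\nabla u_\varepsilon|\,dx=O(1)$ — that is a total-variation bound on $\operatorname{d}_W(b,u_\varepsilon)$, not an $O(\varepsilon)$ bound on its $L^1$ norm, and the inequality $\operatorname{d}_W(b,u)\lesssim W(u)$ you invoke fails where $u_\varepsilon$ is near $a$, which the energy does not exclude on sets of measure $\gg\varepsilon$ without a separate quantitative argument. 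This bound is needed both to extract the measure $\mu$ and to bound the square term whose boundedness your rigidity step requires (note the circularity: the bulk term $\tfrac1\varepsilon\int\operatorname{d}_W(b,u_\varepsilon)(\eta'+\eta\Delta t)$ can be negative where $\kappa<0$, so the energy inequality alone controls neither term).

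More fundamentally, a.e.-in-$y$ rigidity of the blow-up slices cannot by itself identify $\mu$, and this is exactly where the sharp constant is at stake. A sequence with bounded $\mathcal{F}^{(2)}_\varepsilon$ may, over a boundary patch of area $O(\varepsilon)$, dip from $g_\varepsilon$ down to $a$ and linger near $a$ for a normal length of order one before transitioning to $b$: the excess energy of such a competitor is $O(1)$ (it sits precisely in the square term $\bigl|\tfrac1\varepsilon W^{1/2}(u_\varepsilon)\nabla t-\nabla u_\varepsilon\bigr|^2$, through the down-stroke), the slice profiles still converge to $z_{g(y)}$ for a.e.\ $y$, yet $\tfrac1\varepsilon\operatorname{d}_W(b,u_\varepsilon)$ deposits an extra $O(1)$ mass along a normal segment, not on $\partial\Omega$. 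Where $\Delta t<0$ this extra mass makes your retained bulk term strictly smaller than $\int_{\partial\Omega}\kappa\,\theta\,d\mathcal{H}^{N-1}$, and the compensating positivity lives in the term you discard; so to reach the sharp inequality you must quantify that compensation (or prove equi-integrability/no-concentration in $y$ plus uniform normal decay), none of which is done — you yourself flag it as the "main obstacle". The paper avoids this entirely by a different mechanism: since $\mathcal{F}^{(2)}_\varepsilon(u_\varepsilon)\geq\min\mathcal{F}^{(2)}_\varepsilon$, the liminf is proved along minimizers of $F_\varepsilon$, for which Sternberg--Zumbrun/Caffarelli--Cordoba-type estimates give $0\leq b-u_\varepsilon\leq Ce^{-\mu\delta/\varepsilon}$ away from $\partial\Omega$ (Theorem \ref{theorem boundary estimates}); one then slices along normals and compares each slice with the minimizer of the weighted one-dimensional functional $G_\varepsilon$ with weight $\omega(y,\cdot)=\det J_\Phi(y,\cdot)$ and the same endpoint values, invoking the one-dimensional second-order liminf (Theorem \ref{theorem liminf 1d a<alpha}), whose proof carries the rigidity and tail estimates in a setting where they are provable. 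If you first make the reduction to minimizers and import such decay estimates, your calibration identity could be salvaged; as written, the identification of $\mu$ and hence the lower bound is not established.
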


Here, $\nabla_{\tau}$ denotes the tangential gradient.

In the present paper, we relax the bound from below in (\ref{bounds g}) and
allow $g_{\varepsilon}$ to take the value $a$,%
\begin{equation}
a\leq g_{\varepsilon}(x)\leq b, \label{bounds g a=alpha}%
\end{equation}
while still assuming (\ref{u0=b}). We observe that this scenario can only
happen if $\{g=a\}\subseteq\{\kappa\leq0\}$ (see Theorem
\ref{theorem curvature}). If we assume that
\begin{equation}
\{g=a\}\subseteq\{\kappa<0\}, \label{set g=a}%
\end{equation}
then the rescaling (\ref{F 2 epsilon a<aplha}) should be replaced by
\begin{align}
&  \mathcal{F}_{\varepsilon}^{(2)}(u):=\frac{\mathcal{F}_{\varepsilon}%
^{(1)}(u)-\min\mathcal{F}^{(1)}}{\varepsilon|\log\varepsilon|}%
\label{F 2 epsilon a=aplha}\\
&  =\frac{1}{\varepsilon|\log\varepsilon|}\int_{\Omega}\left(  \frac
{1}{\varepsilon}W(u)+\varepsilon|\nabla u|^{2}\right)  \,dx-\frac
{1}{\varepsilon|\log\varepsilon|}\int_{\partial\Omega}\operatorname*{d}%
\nolimits_{W}(b,g)\,d\mathcal{H}^{N-1}\nonumber
\end{align}
if $u\in H^{1}(\Omega)$ and $\operatorname*{tr}u=g_{\varepsilon}$ on
$\partial\Omega$, and $\mathcal{F}_{\varepsilon}^{(2)}(u):=\infty$ otherwise
in $L^{1}(\Omega)$.

The main result of this paper is the following theorem.

\begin{theorem}
\label{theorem main}Let $\Omega\subset\mathbb{R}^{N}$ be an open, bounded,
connected set with boundary of class $C^{2,d}\ $, $0<d\leq1$. Assume that $W$
satisfies \eqref{W_Smooth}-\eqref{W' three zeroes} and that
$g_{\varepsilon}$ satisfy \eqref{bounds g a=alpha},
\eqref{g epsilon smooth a}-\eqref{g epsilon -g bound a}. Suppose also that
\eqref{u0=b} holds. Then
\[
\mathcal{F}^{(2)}(u)=\frac{C_{W}}{2^{1/2}(W^{\prime\prime}(a))^{1/2}}%
\int_{\partial\Omega\cap\{g=a\}}\kappa(y)\,d\mathcal{H}^{N-1}(y)
\]
if $u=b$ and $\mathcal{F}^{(2)}(u)=\infty$ otherwise in $L^{1}(\Omega)$. Here,
$\mathcal{F}^{(2)}$ is defined in \eqref{F 2 epsilon a=aplha}, $\kappa$ is the
mean curvature of $\partial\Omega$, and $C_{W}$ is the constant defined in \eqref{cW definition}.

In particular, if $u_{\varepsilon}\in H^{1}(\Omega)$ is a minimizer of
\eqref{functional cahn-hilliard} subject to the Dirichlet boundary condition
\eqref{dirichlet boundary conditions}, then
\begin{align}
\int_{\Omega}(W(u_{\varepsilon})  &  +\varepsilon^{2}|\nabla u_{\varepsilon
}|^{2})\,dx=\varepsilon\int_{\partial\Omega}\operatorname*{d}\nolimits_{W}%
(b,g)\,d\mathcal{H}^{N-1}\label{sharp bound}\\
&  +\varepsilon^{2}|\log\varepsilon|\frac{C_{W}}{2^{1/2}(W^{\prime\prime
}(a))^{1/2}}\int_{\partial\Omega\cap\{g=a\}}\kappa(y)\,d\mathcal{H}%
^{N-1}(y)+o(\varepsilon^{2}|\log\varepsilon|).\nonumber
\end{align}
\bigskip
\end{theorem}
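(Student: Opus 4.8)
The plan is to establish the $\Gamma$-convergence of $\mathcal{F}_\varepsilon^{(2)}$ defined in \eqref{F 2 epsilon a=aplha} via the usual two inequalities — a liminf (compactness plus lower bound) and a limsup (recovery sequence) — and then deduce \eqref{sharp bound} from the $\Gamma$-convergence together with equicoercivity of the minimization problem. The crucial new feature compared to \cite{fonseca-kreutz-leoni2025I} is that near the part of the boundary where $g=a$, the minimizer must transition from the well $a$ on $\partial\Omega$ to the well $b$ in the bulk, but since $a$ is a zero of $W$ with $W''(a)>0$, the optimal profile decays exponentially away from $\partial\Omega$ only on a scale $\varepsilon|\log\varepsilon|$ rather than $\varepsilon$ — this is precisely what produces the logarithmic scaling. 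First I would flatten the boundary locally, introducing geodesic coordinates $(y,t)$ with $t=\operatorname{dist}(x,\partial\Omega)$, and write the rescaled functional in these coordinates picking up the curvature correction $1-\kappa(y)t+O(t^2)$ in the volume element.

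For the \textbf{upper bound}, I would construct a recovery sequence $u_\varepsilon$ that equals $b$ except in a boundary layer of width comparable to $\varepsilon|\log\varepsilon|$. On $\{g=a\}$ the one-dimensional optimal profile is built from $z_a$ solving \eqref{cauchy problem z alpha}; but since near $a$ we have $W^{1/2}(z)\approx (W''(a)/2)^{1/2}(z-a)$, the solution $z_a(s)-a$ grows like $e^{(W''(a)/2)^{1/2}s}$, so to reach a neighborhood of $b$ one needs $s$ of order $|\log\varepsilon|$. Matching the optimal-profile energy $2\int W^{1/2}(z_a)z_a' \, ds$ against the curvature weight $\int_0^\infty \kappa(y)\, t \, dt$-type correction, the dominant contribution as $\varepsilon\to 0$ is the logarithmically divergent piece, and dividing by $\varepsilon|\log\varepsilon|$ isolates the coefficient $\tfrac{C_W}{2^{1/2}(W''(a))^{1/2}}$. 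On the complementary part $\{g>a\}$, the boundary layer has width $O(\varepsilon)$ and contributes only $O(\varepsilon^2)=o(\varepsilon^2|\log\varepsilon|)$, hence nothing to $\mathcal{F}^{(2)}$; the boundary condition $\operatorname{tr} u_\varepsilon=g_\varepsilon$ is then fixed by the small perturbation estimates \eqref{g epsilon smooth a}--\eqref{g epsilon -g bound a}.

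For the \textbf{lower bound}, I would first show that any sequence with $\mathcal{F}_\varepsilon^{(2)}(u_\varepsilon)$ bounded converges in $L^1(\Omega)$ to $b$ — this follows since the first-order $\Gamma$-limit forces $u_\varepsilon \to b$ under \eqref{u0=b}. Then, localizing near $\partial\Omega$ and using the elementary inequality $\tfrac1\varepsilon W(u)+\varepsilon|\nabla u|^2 \ge 2 W^{1/2}(u)|\partial_t u|$, together with the curvature expansion of the metric, I would reduce to a one-dimensional slicing argument: on each geodesic normal segment the energy minus the leading-order term $\int \operatorname{d}_W(b,g)\, d\mathcal{H}^{N-1}$ is, after dividing by $\varepsilon|\log\varepsilon|$, bounded below by the curvature integral over $\{g=a\}$ up to $o(1)$. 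The key point is to capture the logarithmic factor correctly: on slices where $g(y)$ is close to $a$, the geodesic from $g_\varepsilon(y)$ to $b$ spends a length $\sim |\log\varepsilon|$ in the region where $W^{1/2}$ is linear, and it is exactly the interaction of this long thin layer with the curvature term $-\kappa(y)t$ that yields the stated constant; a Fatou-type argument combined with uniform control of the layer width (coming from the pointwise bound on $g_\varepsilon-g$) then passes to the limit.

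The \textbf{main obstacle} I anticipate is the sharp two-sided matching of the logarithmic layer: one must show that the width of the transition layer near $\{g=a\}$ is $(1+o(1))\,c\,\varepsilon|\log\varepsilon|$ for the correct constant $c$, uniformly in $y$, and that contributions from the set where $0<g(y)-a$ is small but nonzero (a ``boundary of the boundary'' effect, where the scaling interpolates between $\varepsilon$ and $\varepsilon|\log\varepsilon|$) are genuinely negligible after the rescaling. Handling this will require a careful ODE analysis of the profile $z_\alpha$ as $\alpha\downarrow a$ — in particular an expansion of $\int_0^\infty 2W^{1/2}(z_\alpha(s))z_\alpha'(s)\, s\, ds$ showing it behaves like $\tfrac{C_W}{2^{1/2}(W''(a))^{1/2}}|\log(\alpha-a)|$ as $\alpha\to a$ — combined with the assumption \eqref{set g=a} that $\kappa<0$ on $\{g=a\}$, which guarantees that forming the layer is energetically favorable and prevents degeneracies (such as the layer not forming, or forming on the ``wrong'' side) in both the recovery sequence and the lower bound.
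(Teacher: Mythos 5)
Your overall architecture (normal coordinates $\Phi(y,t)=y+t\nu(y)$ with weight $\omega(y,t)=\det J_\Phi$, $\partial_t\omega(y,0)=\kappa$, reduction to a weighted 1D problem, a recovery sequence concentrated in an $\varepsilon|\log\varepsilon|$-wide layer on $\{g=a\}$, slicing for the lower bound, and Anzellotti--Baldo to get \eqref{sharp bound}) is the same as the paper's. But the quantitative core of your plan has a genuine gap on $\{g=a\}$. The profile $z_{g(y)}$ of \eqref{cauchy problem z alpha} is identically $a$ when $g(y)=a$, so no recovery sequence can be ``built from $z_a$''; and your fallback, an expansion of $\int_0^\infty 2W^{1/2}(z_\alpha)z_\alpha'\,s\,ds$ of order $|\log(\alpha-a)|$ as $\alpha\downarrow a$, keys the layer width to the wrong small parameter. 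Under \eqref{g epsilon -g bound a} one only knows $g_\varepsilon(y)-a=O(\varepsilon^\gamma)$ with $\gamma>1$, possibly exactly $0$; following the unregularized ODE $\varepsilon v'=W^{1/2}(v)$ from such data gives an escape time of order $\gamma\varepsilon|\log\varepsilon|\,2^{1/2}(W''(a))^{-1/2}$ (and is undefined when $g_\varepsilon(y)=a$), i.e.\ a layer $2\gamma$ times too wide and the wrong constant. The correct normalization comes from regularizing at scale $\varepsilon$: the paper's profile is the inverse of $\Psi_\varepsilon(r)=\int_{g_\varepsilon(y)}^r\varepsilon(\varepsilon+W(s))^{-1/2}ds$, and the constant $2^{-1/2}(W''(a))^{-1/2}$ is exactly the limit in Proposition \ref{proposition asymptotic behavior}; equivalently, the width is governed by the crossover $v-a\sim\varepsilon^{1/2}$ where $W(v)\sim\varepsilon$, not by $g_\varepsilon(y)-a$. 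This regularization is the missing idea in your limsup.

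The lower bound has a matching gap: the Modica--Mortola inequality plus the metric expansion and Fatou only recover the first-order term $\operatorname{d}_W(g,b)$ slicewise; the second-order term requires knowing at what depth the transition mass $2W^{1/2}(u)|\partial_t u|\,dt$ sits, i.e.\ a sharp lower bound on the width of the $a$-side of the layer, and the pointwise bound on $g_\varepsilon-g$ gives no such control (one can climb out of the well within width $O(K\varepsilon)$ at excess cost only $O(e^{-cK})$, so a soft argument cannot rule out a too-thin layer). The paper gets this through machinery absent from your plan: (i) the boundary-decay estimate for $N$-dimensional minimizers (Theorem \ref{theorem boundary estimates}, of Sternberg--Zumbrun/Caffarelli--C\'ordoba type) to pin the inner endpoint of each slice at $b$ up to $e^{-\mu\delta/\varepsilon}$; (ii) replacement of each slice by the 1D weighted minimizer with the same endpoints; (iii) for 1D minimizers, the Hamiltonian identity $\varepsilon^2(v_\varepsilon')^2\le W(v_\varepsilon)+C\varepsilon$ yielding the escape-time bound \eqref{1d S epsilon}, together with Corollary \ref{corollary bound from below v prime} and the upper bound \eqref{1d T epsilon}. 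Note also that this route proves the liminf along minimizers (which is what \eqref{sharp bound} needs), not for arbitrary bounded-energy sequences as you claim; and the ``boundary of the boundary'' region $\{0<g-a\ \text{small}\}$ that you rightly flag is handled in the paper not by an ODE expansion in $\alpha-a$ but by the splitting into $K_1$, $K_2$ with the weight conditions \eqref{omega0}, \eqref{omega uc}, \eqref{omega decreasing}, which make the $a<\alpha$ one-dimensional theorems applicable there with uniformly negligible contribution.
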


When the Dirichlet boundary conditions (\ref{dirichlet boundary conditions})
are replaced by the mass constraint
\begin{equation}
\int_{\Omega}u(x)\,dx=m, \label{mass constraint}%
\end{equation}
the first-order asymptotic expansion of the Cahn-Hilliard functional
(\ref{functional cahn-hilliard}) was characterized in \cite{baldo1990},
\cite{fonseca-tartar1989}, \cite{modica-mortola1977}, \cite{modica1987},
\cite{sternberg1988}, while the second order asymptotic expansion was first
proved by the third author and Murray in \cite{leoni-murray2016},
\cite{leoni-murray2019} in dimension $N\geq2$ (see also
\cite{bellettini-nayam-novaga2015}).

As in \cite{leoni-murray2016}, \cite{leoni-murray2019}, our proof relies on
the asymptotic development of order two by $\Gamma$-convergence of the
weighted one-dimensional functional%
\begin{equation}
G_{\varepsilon}(v):=\int_{0}^{T}(W(v(t))+\varepsilon^{2}(v^{\prime}%
(t))^{2})\omega(t)\,dt,\quad v\in H^{1}(I), \label{functional 1d}%
\end{equation}
subject to the Dirichlet boundary conditions
\begin{equation}
v(0)=\alpha_{\varepsilon},\quad v(T)=\beta_{\varepsilon},
\label{dirichlet boundary conditions 1d}%
\end{equation}
where $\omega$ is a smooth positive weight, and
\begin{equation}
a\leq\alpha_{\varepsilon},\,\beta_{\varepsilon}\leq b.
\label{initial values 1d}%
\end{equation}
The key difference in our proof of the  $\Gamma$-liminf inequality is that in
\cite{leoni-murray2016}, \cite{leoni-murray2019}, the authors utilized a
rearrangement technique based on the isoperimetric function to reduce the
functional (\ref{functional cahn-hilliard}) to the one-dimensional weighted
problem. This approach, however, seems to be difficult to implement in this new context except
under trivial boundary conditions. Instead, we adopt techniques from
Sternberg and Zumbrum \cite{sternberg-zumbrun1998} and
Caffarelli and Cordoba \cite{caffarelli-cordoba1995} to analyze the behavior
of minimizers of (\ref{functional cahn-hilliard}) and
(\ref{dirichlet boundary conditions}) near the boundaryy, leveraging slicing
arguments in our study.

This paper is organized as follows. In Section \ref{section 1d functional}, we
characterize the asymptotic development of order two by $\Gamma$-convergence
of the weighted one-dimensional family of functionals $G_{\varepsilon}$
defined in (\ref{functional 1d}). Section \ref{section minimizers} explores
qualitative properties of critical points and minimizers of the functional
\ref{functional cahn-hilliard}. Finally, in Section
\ref{section main theorems}, we prove Theorem \ref{theorem main}.

\section{Preliminaries}

We assume that the double-well potential $W:\mathbb{R}\rightarrow
\lbrack0,\infty)$ satisfies the following hypotheses:%

\begin{align}
 \begin{split} &W\text{ is of class $C^{2,\alpha_{0}}(\mathbb{R})$, }\alpha_{0}%
\in(0,1)\text{, and has precisely two zeros} \\&\text{at } a \text{ and } b, \text{ with } a<b,\end{split}\label{W_Smooth}\\
&  W^{\prime\prime}(a)>0,\quad W^{\prime\prime}(b)>0,\label{WPrime_At_Wells}\\
&  \lim_{s\rightarrow-\infty}W^{\prime}(s)=-\infty,\quad\lim_{s\rightarrow
\infty}W^{\prime}(s)=\infty,\label{WGurtin_Assumption}\\
&  \text{$W^{\prime}$ has exactly 3 zeros at $a,b,c$ with $a<c<b$,}\quad W^{\prime\prime
}(c)<0, \label{W' three zeroes}%
\end{align}
Let
\begin{equation}
a<\alpha_{-}<\min\left\{  c,\frac{a+b}{2}\right\}  \leq\max\left\{
c,\frac{a+b}{2}\right\}  <\beta_{-}<b. \label{alpha and beta minus}%
\end{equation}

\begin{remark}
\label{remark W near b}Since $W\in C^{2}(\mathbb{R})$, $W(a)=W^{\prime}(a)=0$,
$W(b)=W^{\prime}(b)=0$, and $W^{\prime\prime}(a)$, $W^{\prime\prime}(b)>0$,
there exists a constant $\sigma>0$ depending on $\alpha_{-}$ and $\beta_{-}$
such that%
\begin{align}
\sigma^{2}(b-s)^{2}  &  \leq W(s)\leq\frac{1}{\sigma^{2}}(b-s)^{2}%
\quad\text{for all }\alpha_{-}\leq s\leq b+1,\label{W near b}\\
\sigma^{2}(s-a)^{2}  &  \leq W(s)\leq\frac{1}{\sigma^{2}}(s-a)^{2}%
\quad\text{for all }a-1\leq s\leq\beta_{-}. \label{W near a}%
\end{align}

\end{remark}

\begin{proposition}
\label{proposition asymptotic behavior}For $a<\beta<\beta_{-}$, we have
\begin{equation}
\lim_{\varepsilon\rightarrow0^{+}}\frac{\int_{a}^{\beta}\frac{1}%
{(\varepsilon+W(s))^{1/2}}\,ds}{|\log\varepsilon|}=\frac{1}{2^{1/2}%
(W^{\prime\prime}(a))^{1/2}}, \label{limit integral}%
\end{equation}
while for $a<\alpha<b$,%
\[
\lim_{\varepsilon\rightarrow0^{+}}\frac{\int_{\alpha}^{b}\frac{1}%
{(\varepsilon+W(s))^{1/2}}\,ds}{|\log\varepsilon|}=\frac{1}{2^{1/2}%
(W^{\prime\prime}(b))^{1/2}}.
\]
In particular, there exists a constant $C>0$ depending only on $W$ such that%
\begin{equation}
\int_{a}^{b}\frac{1}{(\varepsilon+W(s))^{1/2}}\,ds\leq C|\log\varepsilon|
\label{near b log}%
\end{equation}
for all $0<\varepsilon<\varepsilon_{0}$, where $\varepsilon_{0}>0$ depends
only $W$.
\end{proposition}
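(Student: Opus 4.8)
The plan is to prove Proposition~\ref{proposition asymptotic behavior} by isolating the contribution of the integrand near the well $a$ (respectively $b$), where the quadratic behavior of $W$ forces the logarithmic blow-up, and showing that the remainder of the integral stays uniformly bounded.

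\textbf{Step 1: Reduce to a neighborhood of the well.} Fix $\delta\in(0,\beta-a)$ small. I would split
\[
\int_{a}^{\beta}\frac{ds}{(\varepsilon+W(s))^{1/2}}=\int_{a}^{a+\delta}\frac{ds}{(\varepsilon+W(s))^{1/2}}+\int_{a+\delta}^{\beta}\frac{ds}{(\varepsilon+W(s))^{1/2}}.
\]
On $[a+\delta,\beta]$ the potential $W$ is bounded below by a positive constant $m_{\delta}>0$ (since $W>0$ there and the interval is compact), so the second integral is bounded by $(\beta-a)/m_{\delta}^{1/2}$, independently of $\varepsilon$; dividing by $|\log\varepsilon|$ it vanishes in the limit. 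Hence the limit is governed entirely by $\int_{a}^{a+\delta}(\varepsilon+W(s))^{-1/2}\,ds$.

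\textbf{Step 2: Sandwich using the quadratic bounds.} By Remark~\ref{remark W near b}, on $[a,a+\delta]\subset[a,\beta_{-}]$ we have $\sigma^{2}(s-a)^{2}\le W(s)\le\sigma^{-2}(s-a)^{2}$; moreover, by continuity of $W''$, for any $\eta>0$ one can take $\delta$ small enough that $\tfrac12(W''(a)-\eta)(s-a)^{2}\le W(s)\le \tfrac12(W''(a)+\eta)(s-a)^{2}$ on $[a,a+\delta]$. Writing $\lambda_{\pm}:=\tfrac12(W''(a)\pm\eta)$, this gives
\[
\int_{a}^{a+\delta}\frac{ds}{(\varepsilon+\lambda_{+}(s-a)^{2})^{1/2}}\le\int_{a}^{a+\delta}\frac{ds}{(\varepsilon+W(s))^{1/2}}\le\int_{a}^{a+\delta}\frac{ds}{(\varepsilon+\lambda_{-}(s-a)^{2})^{1/2}}.
\]
The model integral is explicit: substituting $t=s-a$,
\[
\int_{0}^{\delta}\frac{dt}{(\varepsilon+\lambda t^{2})^{1/2}}=\frac{1}{\lambda^{1/2}}\log\!\Big(\sqrt{\lambda}\,\delta+\sqrt{\lambda\delta^{2}+\varepsilon}\Big)-\frac{1}{\lambda^{1/2}}\log\sqrt{\varepsilon}=\frac{1}{2\lambda^{1/2}}|\log\varepsilon|+O_{\delta}(1)
\]
as $\varepsilon\to0^{+}$. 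Dividing by $|\log\varepsilon|$ and letting $\varepsilon\to0^{+}$, the lower and upper bounds converge to $1/(2\lambda_{+}^{1/2})$ and $1/(2\lambda_{-}^{1/2})$, respectively. Since $\lambda_{\pm}\to\tfrac12 W''(a)$ as $\eta\to0$, both bounds tend to $1/(2\cdot(\tfrac12 W''(a))^{1/2})=1/(2^{1/2}(W''(a))^{1/2})$, which yields \eqref{limit integral}. The proof of the second limit is identical, using \eqref{W near b} and the expansion of $W$ near $b$, with $W''(b)$ in place of $W''(a)$ and integrating on $[b-\delta,b]$.

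\textbf{Step 3: The uniform bound \eqref{near b log}.} This follows by combining the two limits: split $\int_{a}^{b}=\int_{a}^{\beta}+\int_{\beta}^{b}$ for a fixed $\beta\in(a,\beta_{-})$. The first piece is $\le C_{1}|\log\varepsilon|$ for small $\varepsilon$ by \eqref{limit integral}, and the second is $\le C_{2}|\log\varepsilon|$ for small $\varepsilon$ by the second limit (again splitting off a compact middle piece on which $W$ is bounded below). Adding these and absorbing the $O(1)$ middle contribution into $C|\log\varepsilon|$ for $\varepsilon$ small gives \eqref{near b log}. There is no serious obstacle here; the only mild care needed is in Step~2, making sure the error terms in the sandwiching are genuinely $O_{\delta}(1)$ uniformly in $\varepsilon$ (so that they disappear after dividing by $|\log\varepsilon|$) and that one can pass $\delta\to0$ and $\eta\to0$ in the right order.
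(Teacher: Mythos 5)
Your proposal is correct and follows essentially the same route as the paper: split off the part of the integral away from the well (bounded uniformly in $\varepsilon$), sandwich $W$ by quadratics $\tfrac12 W''(a)(1\pm\eta)(s-a)^2$ near $a$ via Taylor's formula, compute the model integral $\int_0^\delta(\varepsilon+\lambda t^2)^{-1/2}\,dt$ explicitly to extract $\tfrac{1}{2\lambda^{1/2}}|\log\varepsilon|+O_\delta(1)$, and then pass $\varepsilon\to0^+$ followed by $\eta\to0^+$, with the bound \eqref{near b log} obtained by adding the two one-sided estimates. The only difference is cosmetic (you split $\int_a^b$ at a generic $\beta$ rather than at $c$ as the paper does), so nothing further is needed.
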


\begin{proof}
\textbf{Step 1: }Given $c_{0}>0$, we estimate%
\[
\mathcal{A}:=\int_{a}^{\beta}\frac{1}{(\varepsilon+c_{0}(s-a)^{2})^{1/2}%
}\,ds.
\]
Consider the change of variables $\frac{\varepsilon^{1/2}}{c_{0}^{1/2}}t:= s-a$,
so that $\frac{\varepsilon^{1/2}}{c_{0}^{1/2}}dt=ds$. Then%
\begin{align*}
\mathcal{A}  &  =\frac{1}{c_{0}^{1/2}}\int_{0}^{(\beta-a)c_{0}^{1/2}%
/\varepsilon^{1/2}}\frac{1}{\left(  1+t^{2}\right)  ^{1/2}}\,dt=\frac{1}%
{c_{0}^{1/2}}[\log(t+(t^{2}+1)^{1/2})]_{0}^{(\beta-a)c_{0}^{1/2}%
/\varepsilon^{1/2}}\\
&  =\frac{1}{c_{0}^{1/2}}\log\left(  (\beta-a)c_{0}^{1/2}/\varepsilon
^{1/2}+((\beta-a)^{2}c_{0}/\varepsilon+1)^{1/2}\right) \\
&  =\frac{1}{2c_{0}^{1/2}}|\log\varepsilon|+\frac{1}{c_{0}^{1/2}}\log\left(
(\beta-a)c_{0}^{1/2}+((\beta-a)^{2}c_{0}+\varepsilon)^{1/2}\right)  .
\end{align*}
\textbf{Step 2: }By (\ref{W_Smooth}) and (\ref{WPrime_At_Wells}), given
$0<\eta<<1$, we can find $0<\delta_{\eta}<\alpha_{-}-a$ such that
\[
\frac{1}{2}(1-\eta)W^{\prime\prime}(a)(s-a)^{2}\leq W(s)\leq\frac{1}{2}%
(1+\eta)W^{\prime\prime}(a)(s-a)^{2}%
\]
for all $a\leq s<a+\delta_{\eta}$. Hence,
\begin{equation}
\frac{1}{\left(  \varepsilon+c_{1}(s-a)^{2}\right)  ^{1/2}}\leq\frac
{1}{(\varepsilon+W(s))^{1/2}}\leq\frac{1}{\left(  \varepsilon+c_{2}%
(s-a)^{2}\right)  ^{1/2}} \label{1d 202}%
\end{equation}
for all $a\leq s<a+\delta_{\eta}$, where
\begin{equation}
c_{1}=\frac{1}{2}(1+\eta)W^{\prime\prime}(a),\quad c_{2}=\frac{1}{2}%
(1-\eta)W^{\prime\prime}(a). \label{1d 202a}%
\end{equation}
Write%
\[
\int_{a}^{\beta}\frac{1}{(\varepsilon+W(s))^{1/2}}\,ds=\int_{a}^{a+\delta
_{\eta}}\frac{1}{(\varepsilon+W(s))^{1/2}}\,ds+\int_{a+\delta_{\eta}}^{\beta
}\frac{1}{(\varepsilon+W(s))^{1/2}}\,ds.
\]
Since $\min_{[a+s_{\eta},\beta]}W=w_{0}>0$,%
\begin{equation}
\frac{\int_{a+\delta_{\eta}}^{\beta}\frac{1}{(\varepsilon+W(s))^{1/2}}%
\,ds}{|\log\varepsilon|}\leq\frac{\frac{1}{w_{0}^{1/2}}(b-a)}{|\log
\varepsilon|}\rightarrow0 \label{1d 203}%
\end{equation}
as $\varepsilon\rightarrow0^{+}$.

Using (\ref{1d 202}) with $\beta=a+\delta_{\eta}$ and $c_{0}$ replaced by
$c_{1}$ and $c_{2}$, respectively,
\begin{align*}
&  \frac{1}{2c_{1}^{1/2}}|\log\varepsilon|+\frac{1}{c_{1}^{1/2}}\log\left(
\delta_{\eta}c_{1}^{1/2}+(\delta_{\eta}^{2}c_{1}+\varepsilon)^{1/2}\right) \\
&  \leq\int_{a}^{a+\delta_{\eta}}\frac{1}{(\varepsilon+W(s))^{1/2}}\,ds\\
&  \leq\frac{1}{2c_{2}^{1/2}}|\log\varepsilon|+\frac{1}{c_{2}^{1/2}}%
\log\left(  \delta_{\eta}c_{0}^{1/2}+(\delta_{\eta}^{2}c_{0}+\varepsilon
)^{1/2}\right)
\end{align*}
Dividing by $|\log\varepsilon|$ and letting $\varepsilon\rightarrow0^{+}$, we
get%
\[
\frac{1}{2c_{1}^{1/2}}\leq\liminf_{\varepsilon\rightarrow0^{+}}\frac{\int%
_{a}^{a+\delta_{\eta}}\frac{1}{(\varepsilon+W(s))^{1/2}}\,ds}{|\log
\varepsilon|}\leq\limsup_{\varepsilon\rightarrow0^{+}}\frac{\int_{a}%
^{a+\delta_{\eta}}\frac{1}{(\varepsilon+W(s))^{1/2}}\,ds}{|\log\varepsilon
|}\leq\frac{1}{2c_{2}^{1/2}}.
\]
In turn, by (\ref{1d 203}),%
\[
\frac{1}{2c_{1}^{1/2}}\leq\liminf_{\varepsilon\rightarrow0^{+}}\frac{\int%
_{a}^{\beta}\frac{1}{(\varepsilon+W(s))^{1/2}}\,ds}{|\log\varepsilon|}%
\leq\limsup_{\varepsilon\rightarrow0^{+}}\frac{\int_{a}^{\beta}\frac
{1}{(\varepsilon+W(s))^{1/2}}\,ds}{|\log\varepsilon|}\leq\frac{1}{2c_{2}%
^{1/2}}.
\]
Letting $\eta\rightarrow0^{+}$ gives
\[
\lim_{\varepsilon\rightarrow0^{+}}\frac{\int_{a}^{\beta}\frac{1}%
{(\varepsilon+W(s))^{1/2}}\,ds}{|\log\varepsilon|}=\frac{1}{2^{1/2}%
(W^{\prime\prime}(a))^{1/2}}.
\]
Similarly, one can show that for every $a<\alpha<b$,%
\[
\lim_{\varepsilon\rightarrow0^{+}}\frac{\int_{\alpha}^{b}\frac{1}%
{(\varepsilon+W(s))^{1/2}}\,ds}{|\log\varepsilon|}=\frac{1}{2^{1/2}%
(W^{\prime\prime}(b))^{1/2}}.
\]
Hence,%
\begin{align*}
\frac{\int_{a}^{b}\frac{1}{(\varepsilon+W(s))^{1/2}}\,ds}{|\log\varepsilon|}
&  =\frac{\int_{c}^{b}\frac{1}{(\varepsilon+W(s))^{1/2}}\,ds}{|\log
\varepsilon|}+\frac{\int_{a}^{c}\frac{1}{(\varepsilon+W(s))^{1/2}}\,ds}%
{|\log\varepsilon|}\\
&  \rightarrow\frac{1}{2^{1/2}(W^{\prime\prime}(a))^{1/2}}+\frac{1}%
{2^{1/2}(W^{\prime\prime}(b))^{1/2}}.
\end{align*}
The inequality (\ref{near b log}) now follows. \hfill
\end{proof}

For the proof of the following proposition, we refer to
\cite[Proposition 2.3]{fonseca-kreutz-leoni2025I}.

\begin{proposition}
\label{proposition difference}Let $a\leq\alpha_{\varepsilon}\leq
\beta_{\varepsilon}\leq b$. Then, there exists a constant $C>0$ such that
\begin{equation}
\int_{\alpha_{\varepsilon}}^{\beta_{\varepsilon}}\left[  \frac{2}%
{(\delta+W(s))^{1/2}+W^{1/2}(s)}-\frac{1}{(\delta+W(s))^{1/2}}\right]
\,ds\leq C \label{2d 40}%
\end{equation}
for all $0<\delta<1$.
\end{proposition}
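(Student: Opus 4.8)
The plan is to collapse the bracketed difference to a single nonnegative quantity by an elementary manipulation, and then to estimate it by rescaling near each of the two wells. First I would abbreviate $p := (\delta + W(s))^{1/2}$ and $q := W^{1/2}(s)$, noting that $p \ge q \ge 0$ and $p^{2} - q^{2} = \delta$. The integrand in \eqref{2d 40} then simplifies to
\[
\frac{2}{p+q} - \frac{1}{p} = \frac{2p - (p+q)}{p(p+q)} = \frac{p-q}{p(p+q)} = \frac{p^{2}-q^{2}}{p(p+q)^{2}} = \frac{\delta}{p(p+q)^{2}} \ge 0,
\]
so the bracket is automatically nonnegative (no absolute value is needed) and, since $p+q \ge p$,
\[
0 \le \frac{2}{(\delta+W(s))^{1/2}+W^{1/2}(s)} - \frac{1}{(\delta+W(s))^{1/2}} \le \frac{\delta}{(\delta+W(s))^{3/2}} .
\]
Because $[\alpha_{\varepsilon},\beta_{\varepsilon}] \subseteq [a,b]$, it therefore suffices to show that $\int_{a}^{b} \delta\,(\delta + W(s))^{-3/2}\,ds$ is bounded by a constant independent of $\delta \in (0,1)$.

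For that I would invoke the quadratic lower bounds on $W$ near the wells recorded in Remark \ref{remark W near b}, namely \eqref{W near a}--\eqref{W near b}: $W(s) \ge \sigma^{2}(s-a)^{2}$ for $s \in [a,\beta_{-}]$ and $W(s) \ge \sigma^{2}(b-s)^{2}$ for $s \in [\alpha_{-},b]$. Since $\alpha_{-} < \beta_{-}$ by \eqref{alpha and beta minus}, the two intervals $[a,\beta_{-}]$ and $[\alpha_{-},b]$ cover $[a,b]$, so, the integrand being nonnegative,
\[
\int_{a}^{b} \frac{\delta}{(\delta + W(s))^{3/2}}\,ds \le \int_{a}^{\beta_{-}} \frac{\delta}{(\delta + \sigma^{2}(s-a)^{2})^{3/2}}\,ds + \int_{\alpha_{-}}^{b} \frac{\delta}{(\delta + \sigma^{2}(b-s)^{2})^{3/2}}\,ds .
\]
In the first integral the substitution $s = a + \sigma^{-1}\delta^{1/2} t$ gives
\[
\int_{a}^{\beta_{-}} \frac{\delta}{(\delta + \sigma^{2}(s-a)^{2})^{3/2}}\,ds = \frac{1}{\sigma}\int_{0}^{\sigma\delta^{-1/2}(\beta_{-}-a)} \frac{dt}{(1+t^{2})^{3/2}} \le \frac{1}{\sigma}\int_{0}^{\infty} \frac{dt}{(1+t^{2})^{3/2}} = \frac{1}{\sigma},
\]
where the last equality follows from $\frac{d}{dt}\bigl( t(1+t^{2})^{-1/2}\bigr) = (1+t^{2})^{-3/2}$; the second integral is bounded by $1/\sigma$ in exactly the same way after the substitution $s = b - \sigma^{-1}\delta^{1/2} t$. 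This yields \eqref{2d 40} with $C = 2/\sigma$.

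The computation is short, and its substance is entirely in the first step. If one tried to bound the two terms in \eqref{2d 40} separately, then $\int_{a}^{b}(\delta + W(s))^{-1/2}\,ds$ already grows like $|\log\delta|$ by Proposition \ref{proposition asymptotic behavior} (with $\varepsilon$ replaced by $\delta$), and the same is true for the other term; so the uniform bound hinges precisely on the exact cancellation that reduces the difference to $\delta\, p^{-1}(p+q)^{-2}$. Once that identity is spotted, the remaining estimate is the familiar rescaling near a nondegenerate zero of $W$, and there is no genuine obstacle.
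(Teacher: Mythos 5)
Your proof is correct and complete: the algebraic identity reducing the bracket to $\delta/\bigl(p(p+q)^{2}\bigr)\le\delta(\delta+W(s))^{-3/2}$, followed by the rescaling near the two nondegenerate wells using \eqref{W near a}--\eqref{W near b}, gives \eqref{2d 40} with a constant depending only on $W$ (through $\sigma$), uniformly in $\delta\in(0,1)$ and in $\alpha_{\varepsilon},\beta_{\varepsilon}$. The present paper does not reproduce a proof (it cites Proposition 2.3 of \cite{fonseca-kreutz-leoni2025I}), and your argument is exactly the natural one that result rests on, so nothing further is needed.
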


We assume that $g_{\varepsilon}:\partial\Omega\rightarrow\mathbb{R}$ and
$g:\partial\Omega\rightarrow\mathbb{R}$ satisfy the following hypotheses:
\begin{align}
g_{\varepsilon}  &  \in H^{1}(\partial\Omega),\label{g epsilon smooth a}\\
\varepsilon\int_{\partial\Omega}|\nabla_{\tau}g_{\varepsilon}|^{2}%
d\mathcal{H}^{N-1}  &  =o(1)\quad\text{as }\varepsilon\rightarrow
0^{+},\label{g epsilon bound derivatives a}\\
|g_{\varepsilon}(x)-g(x)|  &  \leq C\varepsilon^{\gamma},\quad x\in
\partial\Omega,\quad\gamma>1 \label{g epsilon -g bound a}%
\end{align}
for all $\varepsilon\in(0,1)$ and for some constant $C>0$. Here, $\nabla
_{\tau}$ denotes the tangential gradient.

\bigskip

In what follows, given $z\in\mathbb{R}^{N}$, with a slight abuse of notation,
we write%
\begin{equation}
z=(z^{\prime},z_{N})\in\mathbb{R}^{N-1}\times\mathbb{R}, \label{z' notation}%
\end{equation}
where $z^{\prime}:=(z_{1},\ldots,z_{N-1})$. We also write
\begin{equation}
\nabla^{\prime}:=\left(  \frac{\partial}{\partial z_{1}},\ldots,\frac
{\partial}{\partial z_{N-1}}\right)  . \label{grad' notation}%
\end{equation}
In what follows, given $\delta>0$ we define%
\begin{equation}
\Omega_{\delta}:=\{x\in\Omega:\,\operatorname*{dist}(x,\partial\Omega
)<\delta\}. \label{Omega delta}%
\end{equation}
For the proof of the following lemma, we refer to 
\cite[Lemma 2.6]{fonseca-kreutz-leoni2025I}. 

\begin{lemma}
\label{lemma diffeomorphism}Assume that $\Omega\subset\mathbb{R}^{N}$ is an
open, bounded, connected set and that its boundary $\partial\Omega$ is of
class $C^{2,d}$, $0<d\leq1$. If $\delta>0$ is sufficiently small, then  the mapping
\[
\Phi:\partial\Omega\times\lbrack0,\delta]\rightarrow\overline{\Omega}_{\delta}%
\]
given by%
\[
\Phi(y,t):= y+t\nu(y),
\]
where $\nu(y)$ is the unit inward normal vector to $\partial\Omega$ at $y$ and
$\Omega_{\delta}$ is defined in \eqref{Omega delta}, is a diffeomorphism of
class $C^{1,d}$. Moreover, $\Omega\setminus\Omega_{\delta}$ is connected for
all $\delta>0$ sufficiently small. Finally,%
\begin{equation}
\det J_{\Phi}(y,0)=1\quad\text{for all }y\in\partial\Omega\label{det=1}%
\end{equation}
and%
\begin{equation}
\frac{\partial}{\partial t}\left.  \det J_{\Phi}(y,t)\right\vert _{t=0}%
=\kappa(y)\quad\text{for all }y\in\partial\Omega, \label{curvature}%
\end{equation}
where $\kappa(y)$ is the mean curvature of $\partial\Omega$ at $y$.
\end{lemma}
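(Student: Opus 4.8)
The plan is to prove Lemma \ref{lemma diffeomorphism} by the classical tubular--neighborhood argument, keeping track of the limited $C^{2,d}$ regularity. Since $\partial\Omega$ is of class $C^{2,d}$, the inward unit normal field $\nu:\partial\Omega\to S^{N-1}$ is of class $C^{1,d}$, being built from first derivatives of the local graph maps, and hence $\Phi(y,t)=y+t\nu(y)$ is $C^{1,d}$ on $\partial\Omega\times\mathbb{R}$. To compute its Jacobian I would work in a boundary chart: after a rigid motion, write $\partial\Omega$ near a fixed point $y$ as the graph $x_N=\phi(x')$ with $\phi(0)=0$, $\nabla'\phi(0)=0$, and $\Omega$ lying locally on the side $x_N>\phi(x')$, so that in these coordinates $\nu=(-\nabla'\phi,1)/(1+|\nabla'\phi|^2)^{1/2}$ and $\nu(y)=e_N$. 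Composing $\Phi$ with the chart $x'\mapsto(x',\phi(x'))$, the columns of the Jacobian at $(x',t)=(0,0)$ are $e_1,\dots,e_{N-1}$ (from the tangential derivatives, using $\nabla'\phi(0)=0$) together with $e_N$ (from $\partial_t$), so $D\Phi(y,0)$ is the identity and \eqref{det=1} follows. In particular, extending $t$ to a slightly larger interval, the inverse function theorem makes $\Phi$ a local $C^{1,d}$ diffeomorphism near each point of $\partial\Omega\times\{0\}$, and by compactness of $\partial\Omega$ the local inverses exist on a uniform slab $\partial\Omega\times[0,\delta_0]$.

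Next I would upgrade this to a global diffeomorphism for $\delta$ small. For injectivity, if it failed for every $\delta$, there would be $y_k\neq z_k$ in $\partial\Omega$ and $t_k,s_k\to0$ with $y_k+t_k\nu(y_k)=z_k+s_k\nu(z_k)$; passing to subsequences $y_k\to y$, $z_k\to z$, the relation forces $y=z$, so for large $k$ the points $y_k,z_k$ lie in one chart, and the uniform local injectivity above (combined with $|y_k-z_k|\le t_k+s_k\to0$ and the $C^1$ bound on $\nu$) gives a contradiction. Surjectivity onto $\overline{\Omega}_\delta$ is the standard nearest--point argument: for $\delta$ small every $x\in\overline\Omega$ with $\operatorname{dist}(x,\partial\Omega)=:t\le\delta$ has a unique nearest boundary point $y$, the segment $[y,x]$ is normal to $\partial\Omega$ at $y$, and hence $x=y+t\nu(y)=\Phi(y,t)$. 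Since $\partial\Omega\times[0,\delta]$ is compact and $\Phi$ is a continuous bijection onto the Hausdorff space $\overline{\Omega}_\delta$ which is a local $C^{1,d}$ diffeomorphism everywhere, it is a global $C^{1,d}$ diffeomorphism.

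For the connectedness of $\Omega\setminus\Omega_\delta$, with $\Omega_\delta$ as in \eqref{Omega delta}, I would use these coordinates to define the retraction $r:\overline\Omega\to\Omega\setminus\Omega_\delta$ by $r=\mathrm{id}$ on $\Omega\setminus\Omega_\delta$ and $r(\Phi(y,t))=\Phi(y,\delta)$ for $0\le t\le\delta$; the two formulas agree on $\{\operatorname{dist}(\cdot,\partial\Omega)=\delta\}=\Phi(\partial\Omega\times\{\delta\})$, so $r$ is continuous, and $\Omega\setminus\Omega_\delta$ is closed. Given $p,q\in\Omega\setminus\Omega_\delta$, path--connectedness of the open connected set $\Omega$ furnishes a path in $\Omega$ joining them, and applying $r$ yields a path in $\Omega\setminus\Omega_\delta$ from $p$ to $q$; thus $\Omega\setminus\Omega_\delta$ is connected for all $\delta$ small.

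Finally, for \eqref{curvature} I would differentiate the determinant in the normalized chart above: the tangential columns of the Jacobian of $\Phi\circ(\text{chart})$ at $x'=0$ are $e_i+t\,\partial_{x_i}\nu(0)$ and the last column is $\nu(0)=e_N$; since $\nabla'\phi(0)=0$ one gets $\partial_{x_i}\nu(0)=-\sum_{j=1}^{N-1}\partial^2_{ij}\phi(0)\,e_j\perp e_N$, whence $\det J_\Phi(y,t)=\det\!\big(I_{N-1}-t\,D^2\phi(0)\big)$ near $t=0$ and $\partial_t\det J_\Phi(y,t)|_{t=0}=-\operatorname{tr}D^2\phi(0)$, which is exactly (minus the trace of the second fundamental form, i.e.) the mean curvature $\kappa(y)$ in the sign convention used throughout the paper, e.g. $\kappa\equiv-(N-1)/R$ on a ball of radius $R$. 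The main obstacle is the passage from local to global, namely producing a \emph{single} $\delta>0$ that makes $\Phi$ simultaneously injective and surjective onto $\overline{\Omega}_\delta$, which is where compactness of $\partial\Omega$ and the uniform $C^{1,d}$ control of $\nu$ are essential; the Jacobian identities are then routine linear algebra.
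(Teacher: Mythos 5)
Your proposal is correct and is the classical tubular--neighbourhood argument (normal field $\nu\in C^{1,d}$, inverse function theorem plus a compactness argument for global injectivity, nearest--point projection for surjectivity, retraction onto $\{\operatorname{dist}(\cdot,\partial\Omega)\geq\delta\}$ for connectedness, and the block computation of the Jacobian in a normalized graph chart); the present paper gives no proof of this lemma, deferring to \cite[Lemma 2.6]{fonseca-kreutz-leoni2025I}, which follows the same standard route, so your argument is essentially the intended one. One caveat: your computation yields $\left.\frac{\partial}{\partial t}\det J_{\Phi}(y,t)\right\vert_{t=0}=-\operatorname{tr}D^{2}\phi(0)$, i.e.\ the mean curvature signed with respect to the \emph{inward} normal (negative on a ball), which is precisely what \eqref{curvature} asserts and is the convention under which $\kappa$ enters the limsup/liminf proofs through $\omega(y,t)=\det J_{\Phi}(y,t)$; just note that the divergence-form expression identified with $\kappa$ in the proof of Theorem \ref{theorem curvature} carries the opposite (outward) sign, so your claim that this is ``the convention used throughout the paper'' is best justified by \eqref{curvature} itself rather than by that theorem.
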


\section{A 1D Functional Problem}

\label{section 1d functional}Let
\[
I:=(0,T)
\]
for some $T>0$ and consider a weight function%
\begin{equation}
\omega\in C^{1,d}([0,T]),\quad\min_{\lbrack0,T]}\omega>0. \label{etaSmooth}%
\end{equation}
The prototype we have in mind is given by
\[
\omega(t):= 1+t\kappa(t).
\]
In this section, we study the second-order $\Gamma$-convergence of the family of
functionals
\[
G_{\varepsilon}(v):=\int_{I}(W(v(t))+\varepsilon^{2}(v^{\prime}(t))^{2}%
)\omega(t)\,dt,\quad v\in\,H^{1}(I),
\]
subject to the Dirichlet boundary condition
\begin{equation}
v(0)=\alpha_{\varepsilon},\quad v(T)=\beta_{\varepsilon}. \label{1d dirichlet}%
\end{equation}

In what follows, we will need the weighted BV space $BV_{\omega}(I)$ given by
all functions $v\in BV_{\operatorname*{loc}}(I)$ for which the norm
\[
\Vert v\Vert_{BV_{\omega}}:=\int_{I}|v(t)|\omega(t)\,dt+\int_{I}%
\omega(t)\,d|Dv|(t)
\]
is finite. For $v\in BV_{\omega}(I)$ we will also write the weighted total
variation of the derivative in the following manner
\[
|Dv|_{\omega}(E):=\int_{E}\omega(t)\,d|Dv|(t).
\]
For a more detailed introduction to weighted BV spaces and their applications to phase field models, we refer to \cite{baldi2001,fonseca-liu2017}.

We will study the second-order $\Gamma$-convergence with respect to
the metric in $L^{1}(I)$. This choice is motivated by the following
compactness result.

\begin{theorem}
[Compactness]\label{theorem 1d compactness}Assume that $W$ satisfies \eqref{W_Smooth}-\eqref{W' three zeroes}, that $\omega$
satisfies \eqref{etaSmooth}, and that $\alpha_{\varepsilon
}\rightarrow\alpha$ and $\beta_{\varepsilon}\rightarrow\beta$ as
$\varepsilon\rightarrow0^{+}$ for some $\alpha,\beta\in\mathbb{R}$. Let
$\varepsilon_{n}\rightarrow0^{+}$ and $v_{n}\in$\thinspace$H^{1}(I)$ be such
that%
\[
\sup_{n}\int_{I}\left(  \frac{1}{\varepsilon_{n}}W(v_{n}(t))+\varepsilon
_{n}(v_{n}^{\prime}(t))^{2}\right)  \omega(t)\,dt<\infty.
\]
Then there exist a subsequence $\{v_{n_{k}}\}_{k}$ of $\{v_{n}\}_{n}$ and
$v\in BV_{\omega}(I;\{a,b\})$ such that $v_{n_{k}}\rightarrow v$ in
\thinspace$L^{1}(I)$.
\end{theorem}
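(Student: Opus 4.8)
The plan is to follow the classical Modica--Mortola compactness strategy, adapted to the weighted functional and to the fact that the boundary data converge to values that may coincide with a well. First I would reduce to an $\varepsilon$-independent bound on a genuine total variation: for each $\varepsilon_n$, define $\phi(s):=\int_a^s W^{1/2}(\rho)\,d\rho$ and set $w_n:=\phi\circ v_n$. Since $\omega$ is bounded below by $m_0:=\min_{[0,T]}\omega>0$, Young's inequality gives
\[
\int_I |w_n'(t)|\,dt = \int_I W^{1/2}(v_n(t))\,|v_n'(t)|\,dt \leq \frac{1}{2}\int_I\Big(\frac{1}{\varepsilon_n}W(v_n) + \varepsilon_n (v_n')^2\Big)\,dt \leq \frac{C}{2m_0},
\]
so $\{w_n\}$ is bounded in $W^{1,1}(I)$. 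Because $v_n(0)=\alpha_{\varepsilon_n}\to\alpha$ is bounded, $\{w_n(0)\}$ is bounded, hence $\{w_n\}$ is bounded in $BV(I)$ (indeed in $W^{1,1}(I)$); extract a subsequence $w_{n_k}\to w$ in $L^1(I)$ and a.e. Since $\phi$ is continuous and strictly increasing on $[a,b]$ (and we may assume $a\le v_n\le b$, or truncate using \eqref{WGurtin_Assumption}-type coercivity to push values into $[a,b]$ at no greater energy, cf. the standard truncation lemma), $\phi$ has a continuous inverse on its range, and $v_{n_k}=\phi^{-1}\circ w_{n_k}\to \phi^{-1}\circ w=:v$ in $L^1(I)$ by dominated convergence.

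Next I would show $v\in BV_\omega(I;\{a,b\})$. The inclusion $v(t)\in\{a,b\}$ for a.e.\ $t$ follows from the potential-energy bound: $\int_I W(v_n)\,\omega\,dt \le C\varepsilon_n\to 0$, so $W(v)=0$ a.e.\ by Fatou, i.e.\ $v(t)\in W^{-1}(\{0\})=\{a,b\}$ a.e. For the weighted BV bound, note $v=\psi\circ w$ where $\psi=\phi^{-1}$ is Lipschitz on compact subsets of $(\phi(a),\phi(b))$ but only continuous up to the endpoints; however $v$ takes only the two values $a,b$, so $|Dv|$ is a sum of Dirac masses at the jump set $S_v$ with weight $(b-a)$ each, and $|Dv|_\omega(I)=(b-a)\sum_{t\in S_v}\omega(t)\le (b-a)\max_{[0,T]}\omega\cdot \#S_v$. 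Finiteness of $\#S_v$ is what needs justification: I would derive it from the lower bound on the energy needed to cross from a neighborhood of $a$ to a neighborhood of $b$. Concretely, fix $\beta_-$ as in \eqref{alpha and beta minus}; each time $v_n$ crosses the band $[\alpha_-,\beta_-]$ it contributes at least a fixed positive amount $c_0:=m_0\,\mathrm{d}_W(\alpha_-,\beta_-)/2>0$ to $\int_I(\frac{1}{\varepsilon_n}W(v_n)+\varepsilon_n(v_n')^2)\omega\,dt$ (again by Young's inequality applied on the crossing interval, using $\omega\ge m_0$), so the number of such crossings is bounded by $C/c_0$ uniformly in $n$; passing to the limit, $w$ (hence $v$) has finitely many jumps, all of size $\phi(b)-\phi(a)$ in $w$, i.e.\ $b-a$ in $v$. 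Lower semicontinuity of $|D\cdot|_\omega$ under $L^1$ convergence (which holds since $\omega$ is continuous and positive, so $|D\cdot|_\omega$ is the total variation measure of a continuous positive weight) then gives $|Dv|_\omega(I)\le\liminf_k|Dw_{n_k}|\cdot(\text{const})<\infty$, and together with the trivial $L^1_\omega$ bound on $v$ (it is bounded) we conclude $v\in BV_\omega(I;\{a,b\})$.

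The main obstacle I anticipate is the endpoint degeneracy: $\psi=\phi^{-1}$ fails to be Lipschitz at $s=\phi(a)$ and $s=\phi(b)$, because $W^{1/2}$ vanishes there, so one cannot naively pull back BV bounds through $\psi$, and the $L^1$ convergence $v_{n_k}\to v$ needs the uniform $L^\infty$ bound $a\le v_n\le b$ (or a truncation argument) to invoke dominated convergence — this is precisely the point where the hypothesis \eqref{bounds g a=alpha}/\eqref{initial values 1d} that the data lie in $[a,b]$, or alternatively the superlinear growth \eqref{WGurtin_Assumption} enabling an energy-decreasing truncation, is essential. A secondary technical point is the counting-of-crossings argument: one must be careful to define a ``crossing'' as an interval where $v_n$ goes from $\alpha_-$ to $\beta_-$ (or back), handle the two boundary-adjacent partial crossings separately (they only add a bounded contribution), and verify the per-crossing energy lower bound is independent of $\varepsilon_n$ — which it is, since $\int(\frac{1}{\varepsilon}W(v)+\varepsilon(v')^2)\ge 2\int W^{1/2}(v)|v'| = 2|\phi(v(\text{end}))-\phi(v(\text{start}))|$ regardless of $\varepsilon$. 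Everything else is routine: the truncation lemma and the inequalities \eqref{W near b}-\eqref{W near a} are available, and lower semicontinuity of weighted total variation under $L^1$ convergence with continuous weight is standard.
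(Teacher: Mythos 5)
Your proposal is correct and follows essentially the same classical Modica--Mortola argument as the proof the paper relies on (the paper omits it, citing \cite[Proposition 4.3]{leoni-murray2016}): Young's inequality applied to $\phi(s)=\int_a^s W^{1/2}$ gives a uniform $BV$ bound on $\phi\circ v_n$, compactness in $L^1$ follows, and the vanishing potential energy forces the limit into $\{a,b\}$, with the weighted $BV$ bound coming for free since $\omega$ is bounded and the limit is two-valued. The only point to tighten is the phrase ``we may assume $a\le v_n\le b$'': for compactness an energy-decreasing truncation alone is not enough, and one must also check that the truncation error vanishes in $L^1$ (using $\int_I W(v_n)\,dt\le C\varepsilon_n$ together with the growth of $W$ from \eqref{WGurtin_Assumption} and \eqref{W near b}--\eqref{W near a}) so that $L^1$-convergence of the truncated subsequence transfers to the original sequence $v_{n_k}$; likewise, the bound on $w_n(0)$ via the boundary datum is not available from the theorem's hypotheses, but it is unnecessary once you truncate.
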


The proof is identical to the one of \cite[Proposition 4.3]{leoni-murray2016}
and so we omit it. In view of the previous theorem, we extend $G_{\varepsilon
}$ to \thinspace$L^{1}(I)$ by setting
\begin{equation}
G_{\varepsilon}(v):=\left\{
\begin{array}
[c]{ll}%
\int_{I}(W(v(t))+\varepsilon^{2}(v^{\prime}(t))^{2})\omega(t)\,dt & \text{if
}v\in H^{1}(I)\text{ satisfies \eqref{1d dirichlet} }\\
\infty & \text{otherwise in }L^{1}(I).
\end{array}
\right.  \label{1d functional}%
\end{equation}

\subsection{Zeroth and First-Order $\Gamma$-limit of $G_{\varepsilon}$}

For the proof of the results in this subsection, we refer to
\cite{fonseca-kreutz-leoni2025I}. We begin by establishing the zeroth order
$\Gamma$-limit of the functional $G_{\varepsilon}$.

\begin{theorem}
\label{theorem 1d zero gamma}Assume that $W$ satisfies
\eqref{W_Smooth}-\eqref{W' three zeroes}, that $\omega$ satisfies \eqref{etaSmooth}, and that $\alpha_{\varepsilon}\rightarrow\alpha$
and $\beta_{\varepsilon}\rightarrow\beta$ as $\varepsilon\rightarrow0^{+}$ for
some $\alpha,\beta\in\mathbb{R}$. Then the family $\{G_{\varepsilon
}\}_{\varepsilon}$ $\Gamma$-converges to $G^{(0)}$ in \thinspace$L^{1}(I)$ as
$\varepsilon\rightarrow0^{+}$, where
\[
G^{(0)}(v):=\int_{I}W(v(t))\omega(t)\,dt.
\]

\end{theorem}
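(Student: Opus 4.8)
The plan is to establish the two inequalities of $\Gamma$-convergence separately, exploiting the fact that the zeroth-order functional $G^{(0)}$ is a standard continuous (with respect to $L^1$ convergence along bounded sequences) integral functional. Since $W \ge 0$ and $\omega > 0$, we always have $G_\varepsilon(v) \ge 0$, and for $v \in H^1(I)$ satisfying the boundary conditions, $G_\varepsilon(v) = \int_I W(v)\omega\,dt + \varepsilon^2 \int_I (v')^2 \omega\,dt \ge \int_I W(v)\omega\,dt = G^{(0)}(v)$.

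For the $\Gamma$-$\liminf$ inequality, I would take any sequence $v_\varepsilon \to v$ in $L^1(I)$ with $\liminf_\varepsilon G_\varepsilon(v_\varepsilon) < \infty$ (otherwise there is nothing to prove). Passing to a subsequence achieving the liminf, we may assume each $v_\varepsilon \in H^1(I)$ satisfies \eqref{1d dirichlet} and that the energies are uniformly bounded. Then $\int_I W(v_\varepsilon)\omega\,dt$ is bounded, and by the compactness argument (or simply because $W$ is continuous and nonnegative with superlinear growth away from $[a,b]$, using \eqref{WGurtin_Assumption}) the sequence $v_\varepsilon$ is equi-integrable; extracting a further subsequence with $v_\varepsilon \to v$ a.e. and invoking Fatou's lemma gives $\int_I W(v)\omega\,dt \le \liminf_\varepsilon \int_I W(v_\varepsilon)\omega\,dt \le \liminf_\varepsilon G_\varepsilon(v_\varepsilon)$, which is $G^{(0)}(v) \le \liminf_\varepsilon G_\varepsilon(v_\varepsilon)$. (Note $G^{(0)}$ as written is finite for all $v \in L^1$ with, say, $v$ bounded; one should check the definition of $G^{(0)}$ intended domain—here it is just the integral, possibly $+\infty$.)

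For the $\Gamma$-$\limsup$ inequality, given a target $v \in L^1(I)$ with $G^{(0)}(v) < \infty$, I would construct a recovery sequence. The natural first attempt is the constant-in-$\varepsilon$ sequence $v_\varepsilon := v$; the obstruction is that $v$ need not lie in $H^1(I)$ and need not satisfy the boundary conditions $v(0) = \alpha_\varepsilon$, $v(T) = \beta_\varepsilon$. So the construction proceeds in two reductions: first approximate $v$ in $L^1(I)$ by smooth functions $v_k$ with $G^{(0)}(v_k) \to G^{(0)}(v)$ (possible by density and continuity of $W$, truncating if necessary to control $W$); second, modify each $v_k$ on small boundary layers of width $\lambda_\varepsilon \to 0$ to match $\alpha_\varepsilon$ and $\beta_\varepsilon$, using affine interpolation on $[0,\lambda_\varepsilon]$ and $[T-\lambda_\varepsilon, T]$. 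On these layers, $W(\cdot)\omega(\cdot)$ is bounded (the interpolant stays in a bounded interval since $\alpha_\varepsilon, \beta_\varepsilon \in [a,b]$ by \eqref{initial values 1d} and $v_k$ is bounded), so the contribution to $\int_I W\omega\,dt$ is $O(\lambda_\varepsilon) \to 0$, while $\varepsilon^2 \int (v'_\varepsilon)^2\omega\,dt$ on the layer is $O(\varepsilon^2/\lambda_\varepsilon)$ plus $\varepsilon^2 \int_I (v_k')^2 \omega\,dt = O(\varepsilon^2)$. Choosing $\lambda_\varepsilon = \varepsilon$ (or $\lambda_\varepsilon \to 0$ slower than $\varepsilon^2$, say $\lambda_\varepsilon = \varepsilon^{1/2}$, to be safe) makes both error terms vanish, so $\limsup_\varepsilon G_\varepsilon(v_{k,\varepsilon}) \le G^{(0)}(v_k)$; a diagonal argument over $k$ then yields a recovery sequence for $v$.

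The main obstacle, such as it is, is purely bookkeeping: ensuring the boundary-layer interpolant does not inflate the potential term—handled because $\alpha_\varepsilon,\beta_\varepsilon$ are trapped in $[a,b]$ and $v_k$ can be taken bounded—and choosing the layer width to balance the $\varepsilon^2 (v_\varepsilon')^2$ cost against the width of the layer. Since this is a zeroth-order statement, there is no delicate interplay between the two terms as in the higher-order expansions; everything is a routine direct-method plus mollification argument, and the proof can be compressed accordingly (which is presumably why the authors defer it to \cite{fonseca-kreutz-leoni2025I}).
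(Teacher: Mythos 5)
Your proof is correct: Fatou's lemma along an a.e.\ convergent subsequence gives the liminf inequality, and the limsup follows from truncation at large levels (where monotonicity of $W$ outside $[a,b]$, guaranteed by \eqref{WGurtin_Assumption}--\eqref{W' three zeroes}, ensures the truncation does not increase the energy), mollification, affine boundary layers of width $\lambda_\varepsilon=\varepsilon$ to match $\alpha_\varepsilon,\beta_\varepsilon$, and a diagonal argument. The paper itself defers this proof to the companion paper \cite{fonseca-kreutz-leoni2025I}, and your argument is the standard one expected there, so there is nothing substantive to contrast.
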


Since $W^{-1}(\{0\})=\{a,b\}$, it follows that
\[
\inf_{v\in\,L^{1}(I)}G^{(0)}(v)=0.
\]
Therefore,%
\begin{align}
G_{\varepsilon}^{(1)}(v)  &  :=\frac{G_{\varepsilon}(v)-\inf_{\,L^{1}%
(I)}G^{(0)}}{\varepsilon}\label{G1 epsilon}\\
&  =\int_{I}\left(  \frac{1}{\varepsilon}W(v(t))+\varepsilon(v^{\prime
}(t))^{2}\right)  \omega(t)\,dt\nonumber
\end{align}
if $v\in$\thinspace$H^{1}(I)$ satisfies \eqref{1d dirichlet} and
$G_{\varepsilon}^{(1)}(v):=\infty$ if $v\in$\thinspace$L^{1}(I)\backslash
$\thinspace$H^{1}(I)$ or if the boundary condition \eqref{1d dirichlet} fails.

We now characterize the first-order Gamma limit of the family
$\{G_{\varepsilon}\}_{\varepsilon}$.

\begin{theorem}
\label{theorem 1d first gamma}Assume that $W$ satisfies hypotheses
\eqref{W_Smooth}-\eqref{W' three zeroes}, that $\omega$ satisfies
hypothesis \eqref{etaSmooth}, and that $\alpha_{\varepsilon}\rightarrow\alpha$
and $\beta_{\varepsilon}\rightarrow\beta$ as $\varepsilon\rightarrow0^{+}$ for
some $\alpha,\beta\in\mathbb{R}$. Then the family $\{G_{\varepsilon}%
^{(1)}\}_{\varepsilon}$ $\Gamma$-converges to $G^{(1)}$ in \thinspace
$L^{1}(I)$ as $\varepsilon\rightarrow0^{+}$, where%
\[
G^{(1)}(v):=\left\{
\begin{array}
[c]{ll}%
\begin{split}\frac{C_{W}}{b-a}|Dv|_{\omega}(I)&+\operatorname*{d}\nolimits_{W}%
(v(0),\alpha)\omega(0)\\&+\operatorname*{d}\nolimits_{W}(v(T),\beta)\omega(T)\end{split} &
\text{if }v\in BV_{\omega}(I;\{a,b\}),\\
\infty & \text{otherwise in }\,L^{1}(I),
\end{array}
\right.
\]
where $\operatorname*{d}\nolimits_{W}$ and $C_{W}$ are defined in
\eqref{distance definition} and \eqref{cW definition}, respectively.
\end{theorem}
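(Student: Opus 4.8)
\textbf{Proof strategy for Theorem \ref{theorem 1d first gamma}.}

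The plan is to establish the $\Gamma$-liminf and $\Gamma$-limsup inequalities separately, adapting the classical Modica--Mortola argument to the weighted setting and, crucially, keeping careful track of the boundary contributions coming from the gap between the prescribed traces $\alpha_\varepsilon,\beta_\varepsilon$ and the target well values. First I would recall the fundamental pointwise inequality $\frac1\varepsilon W(v)+\varepsilon(v')^2\ge 2W^{1/2}(v)|v'|$, multiply by $\omega$, and write $2W^{1/2}(v)|v'|\omega = 2\,\omega\,|\,(\Psi\circ v)'|$ where $\Psi(s):=\int_a^s W^{1/2}(\rho)\,d\rho$, so that $\Psi\circ v$ has bounded variation on $I$ uniformly along a recovery-energy sequence.

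For the liminf inequality: given $v_n\to v$ in $L^1(I)$ with $\sup_n G_\varepsilon^{(1)}(v_n)<\infty$, Theorem \ref{theorem 1d compactness} already forces $v\in BV_\omega(I;\{a,b\})$. I would localize: on any compact subinterval $[s,t]\Subset I$ the standard lower semicontinuity of weighted length functionals (treating $\omega$ as essentially constant on small intervals and using a partition-of-unity/covering argument, or directly a Reshetnyak-type semicontinuity for $\int \omega\,d|D(\Psi\circ v)|$) gives $\liminf_n \int_s^t(\frac1\varepsilon W(v_n)+\varepsilon(v_n')^2)\omega\,dt \ge \frac{2}{?}$—more precisely $\ge \frac{C_W}{b-a}|Dv|_\omega((s,t))$, since each jump of $v$ between $a$ and $b$ costs $2|\Psi(b)-\Psi(a)|\,\omega = C_W\,\omega$ and $|Dv|$ charges $b-a$ per jump. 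For the boundary terms, I would work on a small interval $[0,\eta]$: since $v_n(0)=\alpha_\varepsilon\to\alpha$ and $v_n$ must get close to $v(0)\in\{a,b\}$ on most of $[0,\eta]$, the one-sided trace argument (choosing a good point $s_n\in[0,\eta]$ where $v_n(s_n)$ is near $v(0)$ and $\omega(s_n)$ near $\omega(0)$, then using $\int_0^{s_n}2W^{1/2}(v_n)|v_n'|\,dt\ge |\,2\int_{\alpha_\varepsilon}^{v_n(s_n)}W^{1/2}|$ which tends to $\operatorname{d}_W(v(0),\alpha)$) yields the term $\operatorname{d}_W(v(0),\alpha)\omega(0)$; symmetrically at $T$. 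Summing the interior and the two boundary contributions and letting $\eta\to0$ closes the liminf.

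For the limsup inequality: given a target $v\in BV_\omega(I;\{a,b\})$ with finitely many jump points $0<t_1<\dots<t_k<T$, I would build a recovery sequence $v_\varepsilon$ that equals the appropriate optimal profile (solution of $\varepsilon z'=\pm W^{1/2}(z)$, i.e.\ a rescaling of $z_\alpha$ from \eqref{cauchy problem z alpha}) in $\varepsilon$-neighborhoods of each $t_j$ and near the endpoints $0,T$, and is locally constant ($=a$ or $=b$) elsewhere, matching $v_\varepsilon(0)=\alpha_\varepsilon$, $v_\varepsilon(T)=\beta_\varepsilon$. Near an interior jump the energy of the transition layer converges to $C_W\,\omega(t_j)$ (the weight is continuous, and the layer width is $O(\varepsilon|\log\varepsilon|)$, negligible against $\omega$'s modulus of continuity); near the left endpoint the layer connecting $\alpha_\varepsilon$ to $v(0)$ has energy converging to $\operatorname{d}_W(v(0),\alpha)\,\omega(0)$, using $\alpha_\varepsilon\to\alpha$ and the explicit ODE energy identity; similarly at $T$. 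Care is needed so that these pieces have disjoint supports for $\varepsilon$ small and the $L^1$ convergence $v_\varepsilon\to v$ holds, both of which follow from the layers being thin. Since each jump of $v$ between $a$ and $b$ contributes $|b-a|$ to $|Dv|$, we have $\sum_j C_W\,\omega(t_j) = \frac{C_W}{b-a}|Dv|_\omega(I)$, so $\limsup_\varepsilon G_\varepsilon^{(1)}(v_\varepsilon)\le G^{(1)}(v)$.

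The main obstacle I anticipate is not the interior Modica--Mortola part—that is by now routine, even with a weight—but the delicate handling of the boundary layers when $\alpha$ or $\beta$ is allowed to coincide with a well value (as permitted by \eqref{initial values 1d}): the optimal connecting profile from $\alpha_\varepsilon$ to the well degenerates, its width blows up logarithmically, and one must show the boundary energy still converges to the finite quantity $\operatorname{d}_W(v(0),\alpha)\omega(0)$ rather than overshooting. Here the precise asymptotics of $\int_a^\beta (\varepsilon+W(s))^{-1/2}\,ds$ from Proposition \ref{proposition asymptotic behavior}, together with the comparison estimate in Proposition \ref{proposition difference}, are exactly what control the discrepancy between the $\varepsilon$-regularized length $\int 2(\varepsilon+W)^{1/2}$ and the geodesic length $\int 2W^{1/2}$, and I would lean on them to make both the recovery construction and the lower-bound trace argument rigorous uniformly in the boundary data.
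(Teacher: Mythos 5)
Your strategy is correct and is essentially the approach the paper relies on: the proof of Theorem \ref{theorem 1d first gamma} is not reproduced here but deferred to the companion paper \cite{fonseca-kreutz-leoni2025I}, where the liminf is obtained via the weighted Modica--Mortola chain-rule bound together with an endpoint trace-capture argument, and the limsup via thin interior/boundary layers built from the optimal (regularized) profile. In particular, your identification of the degenerate boundary datum $\alpha\in\{a,b\}$ as the only delicate point, handled through the $(\varepsilon+W(\cdot))^{1/2}$-regularized profile and the estimates of Propositions \ref{proposition asymptotic behavior} and \ref{proposition difference}, is exactly how that construction is controlled.
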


Next we show that if $\omega$ is sufficiently close to $\omega(0)$ or strictly
increasing, then the unique minimizer of $G^{(1)}$ is the constant function
$b$.

\begin{corollary}
\label{corollary 1d minimizer}Assume that $W$ satisfies
\eqref{W_Smooth}-\eqref{W' three zeroes} and let $a<\alpha<b$
and\ $\beta=b$. Suppose that $\omega$ satisfies \eqref{etaSmooth}
and that
\begin{equation}
\omega(t)>\omega(0)-\omega_{0}\quad\text{for all }t\in(0,T],
\label{eta close to eta0}%
\end{equation}
where
\begin{equation}
0\leq\omega_{0}<\frac{1}{2}\frac{C_{W}-\operatorname*{d}\nolimits_{W}%
(\alpha,b)}{C_{W}}\omega(0) \label{eta0}%
\end{equation}
if $a<\alpha$, while $\omega$ is strictly increasing if $\alpha=a$. Then the
unique minimizer of $G^{(1)}$ is the constant function $b$, with%
\[
\min_{L_{\omega}^{1}(I)}G^{(1)}(v)=G^{(1)}(b)=\operatorname*{d}\nolimits_{W}%
(\alpha,b)\omega(0).
\]

\end{corollary}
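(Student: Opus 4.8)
\noindent\emph{Proof plan.} The plan is to turn the minimization of $G^{(1)}$ into a finite, elementary computation. First I would use that $\min_{[0,T]}\omega>0$ to note that every $v\in BV_{\omega}(I;\{a,b\})$ has finite ordinary total variation, so its good representative is a step function that alternates between the values $a$ and $b$ at finitely many jump points $0<t_{1}<\cdots<t_{k}<T$, and has well-defined endpoint traces $v(0),v(T)\in\{a,b\}$. Then $|Dv|_{\omega}(I)=(b-a)\sum_{i=1}^{k}\omega(t_{i})$, so by Theorem \ref{theorem 1d first gamma} (recalling $\beta=b$)
\[
G^{(1)}(v)=C_{W}\sum_{i=1}^{k}\omega(t_{i})+\operatorname*{d}\nolimits_{W}(v(0),\alpha)\,\omega(0)+\operatorname*{d}\nolimits_{W}(v(T),b)\,\omega(T),
\]
and in particular $G^{(1)}(b)=\operatorname*{d}\nolimits_{W}(\alpha,b)\,\omega(0)$, which is finite and is the claimed minimal value.

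Next I would record the elementary identities $\operatorname*{d}\nolimits_{W}(\alpha,a)+\operatorname*{d}\nolimits_{W}(\alpha,b)=C_{W}$ for every $\alpha\in[a,b]$, together with $\operatorname*{d}\nolimits_{W}(a,b)=C_{W}$ and $\operatorname*{d}\nolimits_{W}(a,a)=\operatorname*{d}\nolimits_{W}(b,b)=0$, all immediate from \eqref{distance definition} and \eqref{cW definition}. The heart of the proof is then a short case analysis showing $G^{(1)}(v)>G^{(1)}(b)$ for every admissible $v\not\equiv b$. If $v(0)=b$, the boundary term at $0$ equals that of the constant $b$, while $v\not\equiv b$ forces $k\geq1$, hence $G^{(1)}(v)\geq C_{W}\omega(t_{1})+\operatorname*{d}\nolimits_{W}(\alpha,b)\omega(0)>\operatorname*{d}\nolimits_{W}(\alpha,b)\omega(0)$. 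If $v(0)=a$, set $s:=t_{1}$ when $k\geq1$ and $s:=T$ when $v\equiv a$; in either case $s\in(0,T]$ and, discarding nonnegative terms,
\[
G^{(1)}(v)\geq C_{W}\,\omega(s)+\operatorname*{d}\nolimits_{W}(a,\alpha)\,\omega(0).
\]
When $a<\alpha$, I would insert \eqref{eta close to eta0} in the form $\omega(s)>\omega(0)-\omega_{0}$ and use $\operatorname*{d}\nolimits_{W}(\alpha,b)=C_{W}-\operatorname*{d}\nolimits_{W}(a,\alpha)$ to get $G^{(1)}(v)>(C_{W}+\operatorname*{d}\nolimits_{W}(a,\alpha))\omega(0)-C_{W}\omega_{0}$, which is $\geq\operatorname*{d}\nolimits_{W}(\alpha,b)\omega(0)$ as soon as $\omega_{0}\leq\tfrac{2\operatorname*{d}\nolimits_{W}(a,\alpha)}{C_{W}}\omega(0)$ --- guaranteed, with room to spare, by \eqref{eta0}. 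When $\alpha=a$, then $\operatorname*{d}\nolimits_{W}(a,\alpha)=0$, and strict monotonicity of $\omega$ together with $s>0$ gives $\omega(s)>\omega(0)$, so $G^{(1)}(v)\geq C_{W}\omega(s)>C_{W}\omega(0)=\operatorname*{d}\nolimits_{W}(\alpha,b)\omega(0)$. In all cases $G^{(1)}(v)>G^{(1)}(b)$, so $b$ is the unique minimizer with $\min G^{(1)}=\operatorname*{d}\nolimits_{W}(\alpha,b)\omega(0)$.

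There is no genuine analytic difficulty here; the only delicate point is the bookkeeping in the case $v(0)=a$: one must notice that the weight is always sampled at a point $s\in(0,T]$ rather than at $0$, so that the hypothesis on $\omega$ is applicable, and one must keep careful track of the distance identities so that the threshold \eqref{eta0} is exactly what makes the comparison close. A secondary subtlety is the degenerate case $\alpha=a$, where quantitative closeness of $\omega$ to $\omega(0)$ no longer suffices and must be replaced by the strict-monotonicity assumption.
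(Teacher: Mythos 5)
Your proof is correct and takes essentially the same route as the paper's: reduce to the explicit form of $G^{(1)}$ on $BV(I;\{a,b\})$ from Theorem \ref{theorem 1d first gamma}, then a case analysis (jump present, $v\equiv a$, $v\equiv b$) using \eqref{eta close to eta0}--\eqref{eta0} when $a<\alpha$ and strict monotonicity of $\omega$ when $\alpha=a$. The only difference is bookkeeping: the paper discards all boundary terms whenever a jump occurs and compares $C_W(\omega(0)-\omega_0)$ directly with $\operatorname*{d}\nolimits_{W}(\alpha,b)\omega(0)$, while you split according to $v(0)$ and retain the $t=0$ term, which yields the same conclusion with extra margin.
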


\begin{proof}
\textbf{Step 1:} Assume that $a<\alpha<b$. Let $v\in BV_{\omega}(I;\{a,b\})$.
If $v$ has at least one jump point at $t_{0}\in I$, then by
(\ref{eta close to eta0}) and (\ref{eta0}),%
\[
G^{(1)}(v)\geq\frac{C_{W}}{b-a}|Dv|_{\omega}(I)\geq C_{W}\omega(t_{0}%
)>C_{W}(\omega(0)-\omega_{0})\geq\operatorname*{d}\nolimits_{W}(\alpha
,b)\omega(0).
\]
Hence, either $v\equiv b$ or $v\equiv a$. If $v\equiv  a$, then again by
(\ref{eta close to eta0}) and (\ref{eta0})
\[
G^{(1)}(a)=\operatorname*{d}\nolimits_{W}(a,\alpha)\omega(0)+C_{W}%
\omega(T)>C_{W}(\omega(0)-\omega_{0})\geq\operatorname*{d}\nolimits_{W}%
(\alpha,b)\omega(0).
\]
\textbf{Step 2:} Assume that $\alpha=a$ and $\beta=b$. Let $v\in BV_{\omega
}(I;\{a,b\})$. If $v$ has at least one jump point at $t_{0}\in I$, then since
$\omega$ is strictly increasing
\[
G^{(1)}(v)\geq\frac{C_{W}}{b-a}|Dv|_{\omega}(I)\geq C_{W}\omega(t_{0}%
)>C_{W}\omega(0).
\]
Hence, either $v\equiv b$ or $v\equiv a$. If $v \equiv a$, then again by
(\ref{eta close to eta0}) and (\ref{eta0})
\[
G^{(1)}(a)=C_{W}\omega(T)>C_{W}\omega(0).
\]
This completes the proof. \hfill
\end{proof}

\begin{remark}
Note that condition \eqref{eta close to eta0} holds if either $\omega$ is
strictly increasing, with $\omega_{0}=0$, or if $T$ is sufficiently small, by
continuity of $\omega$.
\end{remark}

\subsection{Second-Order $\Gamma$-limsup}

The scaling of the second-order asymptotic development via $\Gamma$-convergence
of $G_{\varepsilon}$ changes depending on whether $a<\alpha$ and $a=a$. When
$a<\alpha$, under the hypotheses of Corollary \ref{corollary 1d minimizer}, we
have
\[
\min_{\,L^{1}(I)}G^{(1)}(v)=G^{(1)}(b)=\operatorname*{d}\nolimits_{W}%
(\alpha,b)\omega(0).
\]
In this case, we define%
\begin{align}
G_{\varepsilon}^{(2)}(v)  &  :=\frac{G_{\varepsilon}^{(1)}(v)-\inf
_{\,L^{1}(I)}G^{(1)}}{\varepsilon}\label{G 2 epsilon a<alpha}\\
&  =\int_{I}\left(  \frac{1}{\varepsilon^{2}}W(v(t))+(v^{\prime}%
(t))^{2}\right)  \omega(t)\,dt-\operatorname*{d}\nolimits_{W}(\alpha
,b)\omega(0)\frac{1}{\varepsilon}\nonumber
\end{align}
if $v\in$\thinspace$H^{1}(I)$ satisfies \eqref{1d dirichlet} and
$G_{\varepsilon}^{(2)}(v):=\infty$ if $v\in$\thinspace$L^{1}(I)\backslash
$\thinspace$H^{1}(I)$ or if the boundary condition \eqref{1d dirichlet} fails.
For the proof of the following theorem, we refer to
\cite{fonseca-kreutz-leoni2025I}.

\begin{theorem}
[Second-Order Limsup, $a<\alpha$]\label{theorem 1d limsup a<alpha}Assume that
$W$ satisfies \eqref{W_Smooth}-\eqref{W' three zeroes}, that
$\alpha_{-}$ satisfies \eqref{alpha and beta minus}, and that
$\omega$ satisfies \eqref{etaSmooth}, \eqref{eta close to eta0},
where%
\begin{equation}
0\leq\omega_{0}<\frac{1}{2}\frac{\operatorname*{d}\nolimits_{W}(a,\alpha_{-}%
)}{C_{W}}\omega(0). \label{eta 0 alpha-}%
\end{equation}
Let
\[
\alpha_{-}\leq\alpha_{\varepsilon},\,\beta_{\varepsilon}\leq b,
\]
with
\begin{equation}
|\alpha_{\varepsilon}-\alpha|\leq A_{0}\varepsilon^{\gamma},\quad
|\beta_{\varepsilon}-b|\leq B_{0}\varepsilon^{\gamma}
\label{alpha epsilon and beta epsilon a<alpha}%
\end{equation}
for some $\alpha$,$\,\beta$ and where $A_{0}$, $B_{0}>0$, and $\gamma>1$. Then
there exist constants $0<\varepsilon_{0}<1$, $C,C_{0}>0$, and $\gamma
_{0},\gamma_{1}>0$, depending only on $\alpha_{-}$, $A_{0}$, $B_{0}$, $T$,
$\omega$, and $W$, and functions $v_{\varepsilon}\in$\thinspace$H^{1}(I)$
satisfying \eqref{1d dirichlet}, $a\leq v_{\varepsilon}\leq b$, and
$v_{\varepsilon}\rightarrow b$ in \thinspace$L^{1}(I)$, such that
\begin{equation}
G_{\varepsilon}^{(2)}(v_{\varepsilon})\leq\int_{0}^{l}2W(p_{\varepsilon
}(t))t\,dt\,\omega^{\prime}(0)+Ce^{-2\sigma l}\left(  2\sigma l+1\right)
+C\varepsilon^{2\gamma}l+C\varepsilon^{\gamma_{1}}|\log\varepsilon
|^{\gamma_{0}} \label{1d 000}%
\end{equation}
for all $0<\varepsilon<\varepsilon_{0}$ and all $l>0$, where $p_{\varepsilon
}(t):= v_{\varepsilon}(\varepsilon t)$ is such that $p_{\varepsilon}\rightarrow
z_{\alpha}$ pointwise in $[0,\infty)$, where $z_{\alpha}$ solves the Cauchy
problem \eqref{cauchy problem z alpha} and $G_{\varepsilon}^{(2)}$ is defined
in \eqref{G 2 epsilon a<alpha}. In particular,%
\[
\limsup_{\varepsilon\rightarrow0^{+}}G_{\varepsilon}^{(2)}(v_{\varepsilon
})\leq\int_{0}^{\infty}2W^{1/2}(z_{\alpha}(t))z_{\alpha}^{\prime
}(t)t\,dt\,\omega^{\prime}(0).
\]

\end{theorem}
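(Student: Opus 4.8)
The plan is to build the recovery sequence as a rescaled optimal boundary profile near $t=0$, glued to the constant $b$ in the bulk, plus a negligible matching layer near $t=T$, and then to read off \eqref{1d 000} from the energy of this competitor after the change of variables $x=\varepsilon t$.

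\textbf{Step 1: the competitor.} For each $\varepsilon$ I take $z_{\alpha_{\varepsilon}}$ to be the solution of the Cauchy problem \eqref{cauchy problem z alpha} with initial value $\alpha_{\varepsilon}$. Since $\alpha_{-}\le\alpha_{\varepsilon}\le b$ and $W^{1/2}>0$ on $(a,b)$ with $W^{1/2}(b)=0$, the function $z_{\alpha_{\varepsilon}}$ is nondecreasing, valued in $[\alpha_{\varepsilon},b)$, satisfies $(z_{\alpha_{\varepsilon}}^{\prime})^{2}=W(z_{\alpha_{\varepsilon}})$, and, using \eqref{W near b} and Gronwall's inequality, it obeys the uniform bounds
\[
b-z_{\alpha_{\varepsilon}}(t)\le(b-a)e^{-\sigma t},\qquad W(z_{\alpha_{\varepsilon}}(t))\le\sigma^{-2}(b-a)^{2}e^{-2\sigma t};
\]
moreover, by continuous dependence on the initial datum and \eqref{alpha epsilon and beta epsilon a<alpha}, $z_{\alpha_{\varepsilon}}\to z_{\alpha}$ uniformly on compact subsets of $[0,\infty)$. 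I then set $\ell_{\varepsilon}:=\varepsilon$, define $v_{\varepsilon}(x):=z_{\alpha_{\varepsilon}}(x/\varepsilon)$ on $[0,T-\ell_{\varepsilon}]$, and on $[T-\ell_{\varepsilon},T]$ let $v_{\varepsilon}$ be the affine function joining $z_{\alpha_{\varepsilon}}((T-\ell_{\varepsilon})/\varepsilon)$ to $\beta_{\varepsilon}$. Then $v_{\varepsilon}\in H^{1}(I)$ satisfies \eqref{1d dirichlet}, $a\le v_{\varepsilon}\le b$ for $\varepsilon$ small, $v_{\varepsilon}\to b$ in $L^{1}(I)$ by dominated convergence, and $p_{\varepsilon}(t)=v_{\varepsilon}(\varepsilon t)=z_{\alpha_{\varepsilon}}(t)$ for $t\le(T-\ell_{\varepsilon})/\varepsilon$, so $p_{\varepsilon}\to z_{\alpha}$ pointwise on $[0,\infty)$.

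\textbf{Step 2: the energy.} On $[0,T-\ell_{\varepsilon}]$ I change variables $x=\varepsilon t$ and use $(z_{\alpha_{\varepsilon}}^{\prime})^{2}=W(z_{\alpha_{\varepsilon}})$ to rewrite that portion of $G_{\varepsilon}^{(1)}(v_{\varepsilon})$ as $\int_{0}^{L_{\varepsilon}}2W(z_{\alpha_{\varepsilon}}(t))\omega(\varepsilon t)\,dt$ with $L_{\varepsilon}:=(T-\ell_{\varepsilon})/\varepsilon$. Since $\omega\in C^{1,d}$ by \eqref{etaSmooth}, I write $\omega(\varepsilon t)=\omega(0)+\omega^{\prime}(0)\varepsilon t+r(\varepsilon t)$ with $|r(s)|\le Cs^{1+d}$. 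The constant piece is $\omega(0)\int_{\alpha_{\varepsilon}}^{z_{\alpha_{\varepsilon}}(L_{\varepsilon})}2W^{1/2}(s)\,ds$, which by \eqref{distance definition}, \eqref{cW definition}, the bound $|\alpha_{\varepsilon}-\alpha|\le A_{0}\varepsilon^{\gamma}$, and $b-z_{\alpha_{\varepsilon}}(L_{\varepsilon})\le(b-a)e^{-\sigma L_{\varepsilon}}$, equals $\omega(0)\operatorname{d}_{W}(\alpha,b)$ up to $O(\varepsilon^{\gamma})+O(e^{-2\sigma L_{\varepsilon}})$. The linear piece is $\omega^{\prime}(0)\varepsilon\int_{0}^{L_{\varepsilon}}2W(z_{\alpha_{\varepsilon}})t\,dt$; splitting it at $t=l$ and using $W(z_{\alpha_{\varepsilon}}(t))\le Ce^{-2\sigma t}$ on the tail gives $\omega^{\prime}(0)\varepsilon\int_{0}^{l}2W(p_{\varepsilon})t\,dt+\varepsilon\,O\!\bigl(e^{-2\sigma l}(2\sigma l+1)\bigr)$. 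The remainder piece is $O(\varepsilon^{1+d})$ because $\int_{0}^{\infty}e^{-2\sigma t}t^{1+d}\,dt<\infty$. Finally, the matching layer on $[T-\ell_{\varepsilon},T]$ has amplitude $\le|b-z_{\alpha_{\varepsilon}}(L_{\varepsilon})|+|b-\beta_{\varepsilon}|\le C\varepsilon^{\gamma}$, so by \eqref{W near b} and $\ell_{\varepsilon}=\varepsilon$ it contributes $O(\varepsilon^{2\gamma})$ to $G_{\varepsilon}^{(1)}(v_{\varepsilon})$. Summing the three contributions, subtracting $\operatorname{d}_{W}(\alpha,b)\omega(0)$ and dividing by $\varepsilon$ as in \eqref{G 2 epsilon a<alpha} then yields \eqref{1d 000} with $\gamma_{1}=\min\{\gamma-1,d\}$ (the far tail $O(e^{-2\sigma L_{\varepsilon}}/\varepsilon)$ being negligible). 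For the final assertion I take $l=l_{\varepsilon}:=|\log\varepsilon|$, so every error term vanishes as $\varepsilon\to0^{+}$, and from $p_{\varepsilon}\to z_{\alpha}$ pointwise together with the dominating bound $2W(p_{\varepsilon}(t))t\le Ce^{-2\sigma t}t\in L^{1}(0,\infty)$, dominated convergence gives $\int_{0}^{l_{\varepsilon}}2W(p_{\varepsilon})t\,dt\to\int_{0}^{\infty}2W(z_{\alpha})t\,dt=\int_{0}^{\infty}2W^{1/2}(z_{\alpha})z_{\alpha}^{\prime}t\,dt$, which is the claimed $\limsup$ bound.

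\textbf{Main difficulty.} The delicate part will be to make every estimate uniform in $\varepsilon$ and in the free parameter $l$ simultaneously, while keeping $v_{\varepsilon}$ admissible (in $H^{1}(I)$, valued in $[a,b]$, with the exact Dirichlet data \eqref{1d dirichlet}) and $p_{\varepsilon}$ convergent to $z_{\alpha}$. In particular, controlling the regime in which $\varepsilon l$ is not small, where the Taylor expansion of $\omega$ on $[0,\varepsilon l]$ and the truncation of the boundary layer interact, and carrying the comparison between $z_{\alpha_{\varepsilon}}$ and $z_{\alpha}$ through the weighted integrals, is what forces the extra terms $C\varepsilon^{2\gamma}l$ and $C\varepsilon^{\gamma_{1}}|\log\varepsilon|^{\gamma_{0}}$ in \eqref{1d 000}.
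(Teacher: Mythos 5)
Your construction is correct in substance, but it is a genuinely different recovery sequence from the one the paper (via \cite{fonseca-kreutz-leoni2025I} and Remark \ref{remark limsup a<alpha}) uses. You take the exact profile $v_{\varepsilon}(x)=z_{\alpha_{\varepsilon}}(x/\varepsilon)$, for which equipartition $(v_{\varepsilon}')^{2}=W(v_{\varepsilon})/\varepsilon^{2}$ is exact, and pay for it with an affine matching layer of width $\varepsilon$ near $t=T$ (an $O(\varepsilon^{2\gamma})$ energy contribution) plus an exponentially small tail correction; the expansion $\omega(\varepsilon t)=\omega(0)+\omega'(0)\varepsilon t+O((\varepsilon t)^{1+d})$, the identification of the constant piece with $\operatorname{d}_{W}(\alpha,b)\omega(0)$ up to $O(\varepsilon^{\gamma})$, and the tail bound via \eqref{W near b} then deliver \eqref{1d 000} and the final limsup exactly as you say, with $\gamma_{1}=\min\{\gamma-1,d\}$. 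The paper instead takes $v_{\varepsilon}$ to be the inverse of $\Psi_{\varepsilon}(r)=\int_{\alpha_{\varepsilon}}^{r}\varepsilon(\delta_{\varepsilon}+W(s))^{-1/2}\,ds$ with $\delta_{\varepsilon}\to0$ faster than $\varepsilon$ (the same device used with $\delta_{\varepsilon}=\varepsilon$ in the proof of Theorem \ref{theorem 1d limsup alpha=a}): the regularization makes the profile reach $\beta_{\varepsilon}$ at a finite time $T_{\varepsilon}=O(\varepsilon|\log\delta_{\varepsilon}|)$, so no gluing is needed, at the cost of the correction integrals handled by Proposition \ref{proposition difference}. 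Beyond taste, the paper's choice buys something you would lose later: the explicit inverse-function formula gives the clean tangential-derivative bound \eqref{partial v epsilon y} when the boundary datum varies along $\partial\Omega$, which is what makes the same construction usable verbatim in the $N$-dimensional limsup (Theorem \ref{theorem limsup}); your exact-profile-plus-gluing would need extra work there, though for the one-dimensional statement it is perfectly adequate.

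One point you flagged but did not close: \eqref{1d 000} is asserted for all $l>0$ with $\varepsilon_{0}$ independent of $l$, and your splitting of the linear term at $t=l$ only covers $l\leq L_{\varepsilon}\sim T/\varepsilon$. For $l>L_{\varepsilon}$ (and $\omega'(0)$ possibly negative, which the hypotheses \eqref{eta close to eta0}, \eqref{eta 0 alpha-} allow) you must compare $\int_{0}^{L_{\varepsilon}}2W(p_{\varepsilon})t\,dt$ with $\int_{0}^{l}2W(p_{\varepsilon})t\,dt$ using that $W(p_{\varepsilon})\leq C\varepsilon^{2\gamma}$ past the profile; this is precisely the regime the term $C\varepsilon^{2\gamma}l$ in \eqref{1d 000} is there to absorb, and it also requires saying how $v_{\varepsilon}$ (hence $p_{\varepsilon}$) is extended beyond $T$. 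This is a routine completion, not a flaw in the approach, and it does not affect the final limsup, where you correctly take $l=|\log\varepsilon|$.
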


\begin{remark}
\label{remark limsup a<alpha}The function $v_{\varepsilon}$ is constructed as
the inverse function of the function
\[
\Psi_{\varepsilon}(r):=\int_{\alpha_{\varepsilon}}^{r}\frac{\varepsilon
}{(\delta_{\varepsilon}+W(s))^{1/2}}\,ds,
\]
where $\delta_{\varepsilon}\rightarrow0^{+}$ goes to zero faster than
$\varepsilon$. Observe that if we take $\delta_{\varepsilon}=\varepsilon$,
then \eqref{1d 000} should be replaced by%
\begin{align*}
G_{\varepsilon}^{(2)}(v_{\varepsilon})  &  \leq C+\int_{0}^{l}%
2W(p_{\varepsilon}(s))s\,ds\omega^{\prime}(0)+Ce^{-2\sigma l}\left(  2\sigma
l+1\right) \\
&  \quad+C\varepsilon^{2\gamma}l+C\varepsilon\log^{2}\varepsilon
+C\varepsilon^{d}|\log\varepsilon|^{1+d}+C\varepsilon^{2\gamma-2}.
\end{align*}

\end{remark}

On the other hand, when $\alpha=a$, again under the hypotheses of Corollary
\ref{corollary 1d minimizer}, we have
\[
\min_{\,L^{1}(I)}G^{(1)}(v)=G^{(1)}(b)=C_{W}\omega(0).
\]
In this case, we define%
\begin{align}
G_{\varepsilon}^{(2)}(v)  &  :=\frac{G_{\varepsilon}^{(1)}(v)-\inf
_{\,L^{1}(I)}G^{(1)}}{\varepsilon|\log\varepsilon|}\label{G 2 epsilon a=alpha}%
\\
&  =\frac{1}{\varepsilon|\log\varepsilon|}\int_{I}\left(  \frac{1}%
{\varepsilon}W(v(t))+\varepsilon(v^{\prime}(t))^{2}\right)  \omega
(t)\,dt-C_{W}\omega(0)\frac{1}{\varepsilon|\log\varepsilon|}\nonumber
\end{align}
if $v\in$\thinspace$H^{1}(I)$ satisfies \eqref{1d dirichlet} and
$G_{\varepsilon}^{(2)}(v):=\infty$ if $v\in$\thinspace$L^{1}(I)\backslash
$\thinspace$H^{1}(I)$ or if the boundary condition \eqref{1d dirichlet} fails.

We study the second-order $\Gamma$-limsup of the family $\{G_{\varepsilon
}\}_{\varepsilon}$.

\begin{theorem}
[Second-Order Limsup, $\alpha=a$]\label{theorem 1d limsup alpha=a}Assume that
$W$ satisfies \eqref{W_Smooth}-\eqref{W' three zeroes}, that
$\alpha_{-},\beta_{-}$ satisfy \eqref{alpha and beta minus}, and
that $\omega$ satisfies \eqref{etaSmooth} and is strictly
increasing with $\omega^{\prime}(0)>0$. Let $\alpha_{-}\leq\alpha
_{\varepsilon}<\beta_{-}\leq\beta_{\varepsilon}<b$ with
\begin{equation}
|\alpha_{\varepsilon}-a|\leq A_{0}\varepsilon^{\gamma},\quad|\beta
_{\varepsilon}-b|\leq B_{0}\varepsilon^{\gamma},
\label{alpha epsilon and beta epsilon}%
\end{equation}
where $A_{0}$, $B_{0}>0$, and $\gamma>1$. There exist $v_{\varepsilon}\in
H_{\omega}^{1}(I)$ satisfying \eqref{1d dirichlet}, such that $v_{\varepsilon
}\rightarrow b$ in $L_{\omega}^{1}(I)$ and, for every $0<\eta<1$,%
\begin{equation}
G_{\varepsilon}^{(2)}(v_{\varepsilon})\leq(1+\eta)\frac{C_{W}}{2^{1/2}%
(W^{\prime\prime}(a))^{1/2}}\omega^{\prime}(0)+\frac{C}{|\log\varepsilon|}
\label{1d limsup alpha=a}%
\end{equation}
for all $0<\varepsilon<\varepsilon_{\eta}$, for some $0<\varepsilon_{\eta}<1$
depending on $\eta$, $A_{0}$, $B_{0}$, $T$, $\omega$, and $W$, and for some
constant $C>0$, depending on $A_{0}$, $B_{0}$, $T$, $\omega$, and $W$, and
where $G_{\varepsilon}^{(2)}$ is defined in \eqref{G 2 epsilon a=alpha}. In
particular,%
\begin{equation}
\limsup_{\varepsilon\rightarrow0^{+}}G_{\varepsilon}^{(2)}(v_{\varepsilon
})\leq\frac{C_{W}}{2^{1/2}(W^{\prime\prime}(a))^{1/2}}\omega^{\prime}(0).
\label{1d limsup alpha=a no eta}%
\end{equation}

\end{theorem}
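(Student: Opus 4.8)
The plan is to construct an explicit competitor $v_\varepsilon$ for the rescaled functional \eqref{G 2 epsilon a=alpha} by gluing a boundary-layer profile near $t=0$ (where $v(0)=\alpha_\varepsilon\to a$) to the constant $b$ on the bulk, mimicking the construction of Remark~\ref{remark limsup a<alpha} but now with $\delta_\varepsilon$ comparable to $\varepsilon$ rather than much smaller. Concretely, I would define $v_\varepsilon$ as the inverse of
\[
\Psi_\varepsilon(r):=\int_{\alpha_\varepsilon}^{r}\frac{\varepsilon}{(\delta_\varepsilon+W(s))^{1/2}}\,ds
\]
on the interval $[\alpha_\varepsilon,\beta_{\varepsilon,*}]$ for a suitable cutoff level $\beta_{\varepsilon,*}$ slightly below $b$, chosen so that $\Psi_\varepsilon(\beta_{\varepsilon,*})=:t_\varepsilon\to 0^+$, and then connect $\beta_{\varepsilon,*}$ to the actual boundary datum $\beta_\varepsilon$ over a short interval near $t=T$ using a standard optimal-profile connection (the term this contributes is lower order because $|\beta_\varepsilon-b|\le B_0\varepsilon^\gamma$ with $\gamma>1$, exactly as in Theorem~\ref{theorem 1d limsup a<alpha}). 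On $(t_\varepsilon, T-\cdots)$ we set $v_\varepsilon\equiv b$. Since $t_\varepsilon\to 0$, the $L^1_\omega$-convergence $v_\varepsilon\to b$ is immediate.

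The core computation is to evaluate $G_\varepsilon^{(1)}(v_\varepsilon)-C_W\omega(0)$ and divide by $\varepsilon|\log\varepsilon|$. On the boundary layer, the equipartition $\frac1\varepsilon W(v_\varepsilon)+\varepsilon(v_\varepsilon')^2$ is handled by the change of variables $s=v_\varepsilon(t)$: one gets, up to the weight, the integrand
\[
\frac{2W(s)+\delta_\varepsilon}{(\delta_\varepsilon+W(s))^{1/2}}
=2(\delta_\varepsilon+W(s))^{1/2}-\frac{\delta_\varepsilon}{(\delta_\varepsilon+W(s))^{1/2}}
\]
integrated in $s$ from $\alpha_\varepsilon$ to $\beta_{\varepsilon,*}$. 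The leading piece $\int 2W^{1/2}(s)\,ds$ over $[a,b]$ produces exactly $C_W$; the discrepancy between $\int 2(\delta_\varepsilon+W)^{1/2}$ and $\int 2W^{1/2}$ is $O(1)$ by Proposition~\ref{proposition difference} together with $\delta_\varepsilon\to0$, hence negligible after dividing by $\varepsilon|\log\varepsilon|$ once one also accounts for the $\varepsilon$ prefactor hidden in $\Psi_\varepsilon$. The genuinely new term comes from the weight: expanding $\omega(t)=\omega(0)+\omega'(0)t+O(t^2)$ along the layer and using $t=\Psi_\varepsilon(s)\approx \int_{\alpha_\varepsilon}^s \varepsilon(\delta_\varepsilon+W)^{-1/2}$, the $\omega'(0)$-contribution is
\[
\omega'(0)\int_{\alpha_\varepsilon}^{\beta_{\varepsilon,*}} \Psi_\varepsilon(s)\,\frac{2W(s)+\delta_\varepsilon}{(\delta_\varepsilon+W(s))^{1/2}}\,ds,
\]
and the dominant behaviour of this integral, after inserting $\Psi_\varepsilon(s)\sim \varepsilon\int_{\alpha_\varepsilon}^s(\delta_\varepsilon+W)^{-1/2}$, is governed by the region near $s=a$, where $W(s)\sim \tfrac12 W''(a)(s-a)^2$. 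There the inner integral $\int(\delta_\varepsilon+W)^{-1/2}$ behaves like $\frac{1}{2^{1/2}(W''(a))^{1/2}}|\log\delta_\varepsilon|$ by (the proof of) Proposition~\ref{proposition asymptotic behavior}, and one checks that the whole weighted term is asymptotically $\varepsilon\,|\log\delta_\varepsilon|\,\frac{C_W}{2^{1/2}(W''(a))^{1/2}}\omega'(0)(1+o(1))$. Choosing $\delta_\varepsilon$ so that $|\log\delta_\varepsilon|\sim|\log\varepsilon|$ (e.g. $\delta_\varepsilon=\varepsilon$ works up to the $(1+\eta)$ loss, or $\delta_\varepsilon=\varepsilon^{1+o(1)}$), division by $\varepsilon|\log\varepsilon|$ yields the claimed bound $(1+\eta)\frac{C_W}{2^{1/2}(W''(a))^{1/2}}\omega'(0)+C/|\log\varepsilon|$.

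I expect the main obstacle to be the careful bookkeeping that shows the $O(t^2)$ Taylor remainder of $\omega$, the endpoint correction near $\beta_\varepsilon$, the $O(1)$ error from Proposition~\ref{proposition difference}, and the error in replacing $\Psi_\varepsilon(s)$ by its leading approximation are all $o(\varepsilon|\log\varepsilon|)$ — i.e. that none of the lower-order terms in the unrescaled energy secretly contributes at the scale $\varepsilon|\log\varepsilon|$. In particular one must track that the ``$-\frac{\delta_\varepsilon}{(\delta_\varepsilon+W)^{1/2}}$'' correction, whose $s$-integral over $[a,b]$ is itself of size $\delta_\varepsilon^{1/2}\cdot O(|\log\delta_\varepsilon|)\to 0$, does not interact badly with the $\omega'(0)t$ factor; and that the factor $(1+\eta)$ absorbs the imprecision in localizing to a neighbourhood $[a,a+\delta_\eta]$ of $a$ where the quadratic lower/upper bounds $c_1(s-a)^2\le W(s)\le c_2(s-a)^2$ from the proof of Proposition~\ref{proposition asymptotic behavior} hold with constants within $\eta$ of $\tfrac12 W''(a)$. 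Once these estimates are organized exactly as in the $a<\alpha$ case of Theorem~\ref{theorem 1d limsup a<alpha}, with $|\log\varepsilon|$ now appearing from the logarithmically divergent length of the boundary layer rather than being absent, the result follows.
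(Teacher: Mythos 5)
Your construction is essentially the paper's own: the paper also takes $v_\varepsilon$ to be the inverse of $\Psi_\varepsilon(r)=\int_{\alpha_\varepsilon}^r \varepsilon(\varepsilon+W(s))^{-1/2}\,ds$ (i.e.\ $\delta_\varepsilon=\varepsilon$), expands $\omega(t)=\omega(0)+\omega'(0)t+R_1(t)$, extracts $C_W$ from the $\omega(0)$-part via the equipartition identity and Proposition \ref{proposition difference}, and obtains the leading term $(1+\eta)\frac{C_W\omega'(0)}{2^{1/2}(W''(a))^{1/2}}$ from the $\omega'(0)t$-part through the logarithmic divergence in Proposition \ref{proposition asymptotic behavior}, exactly as you propose. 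The only differences are cosmetic bookkeeping: the paper runs the profile all the way to $\beta_\varepsilon$ and holds it constant (rather than cutting at $\beta_{\varepsilon,*}$, setting $v\equiv b$, and reconnecting near $t=T$), organizes the $\omega'(0)t$-estimate via a time shift by $L_\varepsilon=\Psi_\varepsilon(c)$ plus exponential decay of the profile instead of your direct substitution, and uses the $C^{1,d}$ remainder $O(t^{1+d})$ rather than $O(t^{2})$ — none of which changes the argument.
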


\begin{proof}
In this proof, $\varepsilon_{0}$ and $C$ depend only on $A_{0}$, $B_{0}$, $T$, $\omega$, $W$. In what follows, we will
take $\varepsilon_{0}$ smaller and $C$ larger, if necessary, preserving the
same dependence on the parameters.

Define
\[
\Psi_{\varepsilon}(r):=\int_{\alpha_{\varepsilon}}^{r}\frac{\varepsilon
}{(\varepsilon+W(s))^{1/2}}\,ds.
\]
Let
\begin{equation}
0\leq L_{\varepsilon}:=\Psi_{\varepsilon}(c)<T_{\varepsilon}:=\Psi
_{\varepsilon}(\beta_{\varepsilon}). \label{1d xi epsilon a}%
\end{equation}
By (\ref{near b log}) and the fact that $a\leq\alpha_{\varepsilon}%
,\beta_{\varepsilon}\leq b$, we have
\begin{equation}
L_{\varepsilon}\leq T_{\varepsilon}\leq\int_{a}^{b}\frac{\varepsilon
}{(\varepsilon+W(s))^{1/2}}\,ds\leq C\varepsilon|\log\varepsilon|
\label{1d L epsilon}%
\end{equation}
for all $0<\varepsilon<\varepsilon_{0}$.

Let $v_{\varepsilon}:[0,T_{\varepsilon}]\rightarrow\lbrack\alpha_{\varepsilon
},\beta_{\varepsilon}]$ be the inverse of $\Psi_{\varepsilon}$. Then
$v_{\varepsilon}\left(  0\right)  =\alpha_{\varepsilon}$, $v_{\varepsilon
}(T_{\varepsilon})=\beta_{\varepsilon}$, and
\begin{equation}
v_{\varepsilon}^{\prime}(t)=\frac{(\varepsilon+W(v_{\varepsilon}\left(
t\right)  ))^{1/2}}{\varepsilon}. \label{1d 556a}%
\end{equation}
Extend $v_{\varepsilon}$ to be equal to $\beta_{\varepsilon}$ for
$t>T_{\varepsilon}$.

Since $\omega\in C^{1,d}(I)$, by Taylor's formula, for $t\in\lbrack0,T]$,%
\[
\omega(t)=\omega(0)+\omega^{\prime}(0)t+R_{1}(t),
\]
where%
\begin{equation}
|R_{1}(t)|=|\omega^{\prime}(\theta t)-\omega^{\prime}(0)|t\leq|\omega^{\prime
}|_{C^{0,d}}t^{1+d}. \label{1d 103aa}%
\end{equation}
Write%
\begin{align}
G_{\varepsilon}^{(2)}(v_{\varepsilon})  &  =\left[  \int_{0}^{T_{\varepsilon}%
}\left(  \frac{1}{\varepsilon}W(v_{\varepsilon})+\varepsilon(v_{\varepsilon
}^{\prime})^{2}\right)  \,dt-C_{W}\right]  \frac{\omega(0)}{\varepsilon
|\log\varepsilon|}\nonumber\\
&  \quad+\int_{0}^{T_{\varepsilon}}\left(  \frac{1}{\varepsilon}%
W(v_{\varepsilon})+\varepsilon(v_{\varepsilon}^{\prime})^{2}\right)
t\,dt\frac{\omega^{\prime}(0)}{\varepsilon|\log\varepsilon|}\label{1d 557a}\\
&  \quad+\int_{0}^{T_{\varepsilon}}\left(  \frac{1}{\varepsilon}%
W(v_{\varepsilon})+\varepsilon(v_{\varepsilon}^{\prime})^{2}\right)
R_{1}\,dt\frac{1}{\varepsilon|\log\varepsilon|}\nonumber\\
&  \quad+\int_{T_{\varepsilon}}^{T}\left(  \frac{1}{\varepsilon}%
W(v_{\varepsilon})+\varepsilon(v_{\varepsilon}^{\prime})^{2}\right)
\omega\,dt\frac{1}{\varepsilon|\log\varepsilon|}=:\mathcal{A}+\mathcal{B}%
+\mathcal{C}+\mathcal{D}.\nonumber
\end{align}
\textbf{Step 1. }We estimate $\mathcal{A}$. By (\ref{1d 556a}), the change of
variables $s=v_{\varepsilon}(t)$, and the equality
\[
(A+B)^{1/2}-B^{1/2}=\frac{A}{(A+B)^{1/2}+B^{1/2}},
\]
we have%
\begin{align*}
\int_{0}^{T_{\varepsilon}}  &  \left(  \frac{1}{\varepsilon}W(v_{\varepsilon
})+\varepsilon(v_{\varepsilon}^{\prime})^{2}\right)  \,dt=\int_{0}%
^{T_{\varepsilon}}\left(  \frac{1}{\varepsilon}(\varepsilon+W(v_{\varepsilon
}))+\varepsilon(v_{\varepsilon}^{\prime})^{2}\right)  \,dt-T_{\varepsilon}\\
&  =\int_{0}^{T_{\varepsilon}}2(\varepsilon+W(v_{\varepsilon}))^{1/2}%
v_{\varepsilon}^{\prime}\,dt-T_{\varepsilon}\\
&  =\int_{\alpha_{\varepsilon}}^{\beta_{\varepsilon}}2(\varepsilon
+W(s))^{1/2}\,ds-\int_{\alpha_{\varepsilon}}^{\beta_{\varepsilon}}%
\frac{\varepsilon}{(\varepsilon+W(s))^{1/2}}\,ds\\
&  =\int_{\alpha_{\varepsilon}}^{\beta_{\varepsilon}}2W^{1/2}(s)\,ds+\int%
_{\alpha_{\varepsilon}}^{\beta_{\varepsilon}}\left[  \frac{2\varepsilon
}{(\varepsilon+W(s))^{1/2}+W^{1/2}(s)}-\frac{\varepsilon}{(\varepsilon
+W(s))^{1/2}}\right]  \,ds.
\end{align*}
By Proposition \ref{proposition difference},%
\[
\int_{\alpha_{\varepsilon}}^{\beta_{\varepsilon}}\left[  \frac{2\varepsilon
}{(\varepsilon+W(s))^{1/2}+W^{1/2}(s)}-\frac{\varepsilon}{(\varepsilon
+W(s))^{1/2}}\right]  \,ds\leq C\varepsilon
\]
for all $0<\varepsilon<\varepsilon_{0}$. Hence, using also the fact that
$a<\alpha_{\varepsilon}<\beta_{\varepsilon}<b$, we obtain%
\begin{equation}
\int_{0}^{T_{\varepsilon}}\left(  \frac{1}{\varepsilon}W(v_{\varepsilon
})+\varepsilon(v_{\varepsilon}^{\prime})^{2}\right)  \,dt\leq C_{W}%
+C\varepsilon, \label{1d 200a}%
\end{equation}
and so%
\[
\mathcal{A}\leq C\frac{1}{|\log\varepsilon|}%
\]
for all $0<\varepsilon<\varepsilon_{0}$.

\textbf{Step 2. }We estimate $\mathcal{B}$ in (\ref{1d 557a}). By
(\ref{1d 556a}) and the change of variables $t:= r+L_{\varepsilon}$,%
\begin{align*}
\mathcal{B}  &  =\int_{-L_{\varepsilon}}^{T_{\varepsilon}-L_{\varepsilon}%
}\left(  \frac{1}{\varepsilon}W(\bar{v}_{\varepsilon})+\varepsilon(\bar
{v}_{\varepsilon}^{\prime})^{2}\right)  \,dr\frac{\omega^{\prime
}(0)L_{\varepsilon}}{\varepsilon|\log\varepsilon|}+\int_{-L_{\varepsilon}%
}^{T_{\varepsilon}-L_{\varepsilon}}\left(  \frac{1}{\varepsilon}W(\bar
{v}_{\varepsilon})+\varepsilon(\bar{v}_{\varepsilon}^{\prime})^{2}\right)
r\,dr\frac{\omega^{\prime}(0)}{\varepsilon|\log\varepsilon|}\\
&  =:\mathcal{B}_{1}+\mathcal{B}_{2},
\end{align*}
where $\bar{v}_{\varepsilon}(r):=v_{\varepsilon}(r+L_{\varepsilon})$. By
(\ref{1d 200a}), (\ref{1d L epsilon}) and the fact that $\omega^{\prime}%
(0)>0$,%
\begin{align*}
\mathcal{B}_{1}  &  \leq C_{W}\frac{\omega^{\prime}(0)L_{\varepsilon}%
}{\varepsilon|\log\varepsilon|}+C\frac{L_{\varepsilon}}{|\log\varepsilon|}\\
&  \leq C_{W}\frac{\omega^{\prime}(0)}{|\log\varepsilon|}\int_{a}^{c}\frac
{1}{(\varepsilon+W(\rho))^{1/2}}\,d\rho+C\varepsilon
\end{align*}
for all $0<\varepsilon<\varepsilon_{0}$. By \ref{limit integral}, given
$0<\eta<1$, there exists $0<\varepsilon_{\eta}<1$ such that%
\[
C_{W}\frac{\omega^{\prime}(0)}{|\log\varepsilon|}\int_{a}^{a+\eta}\frac
{1}{(\varepsilon+W(\rho))^{1/2}}\,d\rho\leq(1+\eta)\frac{C_{W}\omega^{\prime
}(0)}{2^{1/2}(W^{\prime\prime}(a))^{1/2}}%
\]
for all $0<\varepsilon<\varepsilon_{\eta}$.

On the other hand, by the change of variables $r:=\varepsilon s$,
\begin{align*}
\mathcal{B}_{2}  &  =\int_{-L_{\varepsilon}}^{T_{\varepsilon}-L_{\varepsilon}%
}2W(\bar{v}_{\varepsilon})r\,dr\frac{\omega^{\prime}(0)}{\varepsilon^{2}%
|\log\varepsilon|}+\int_{-L_{\varepsilon}}^{T_{\varepsilon}-L_{\varepsilon}%
}r\,dr\frac{\varepsilon\omega^{\prime}(0)}{\varepsilon^{2}|\log\varepsilon|}\\
&  =\int_{-L_{\varepsilon}\varepsilon^{-1}}^{(T_{\varepsilon}-L_{\varepsilon
})\varepsilon^{-1}}2W(p_{\varepsilon}(s))s\,ds\frac{\omega^{\prime}(0)}%
{|\log\varepsilon|}+\frac{\varepsilon\omega^{\prime}(0)[(T_{\varepsilon
}-L_{\varepsilon})^{2}-L_{\varepsilon}^{2}]}{2\varepsilon^{2}|\log
\varepsilon|}\\
&  :=\mathcal{B}_{2,1}+\mathcal{B}_{2,2},
\end{align*}
where $p_{\varepsilon}(s):=\bar{v}_{\varepsilon}(\varepsilon s)=v_{\varepsilon
}(\varepsilon s+L_{\varepsilon})$ solves the Cauchy problem%
\[
\left\{
\begin{array}
[c]{l}%
p_{\varepsilon}^{\prime}(s)=(\varepsilon+W(p_{\varepsilon}\left(  s\right)
))^{1/2},\\
p_{\varepsilon}(0)=c,
\end{array}
\right.
\]
in $[-L_{\varepsilon}\varepsilon^{-1},(T_{\varepsilon}-L_{\varepsilon
})\varepsilon^{-1}]$. Since $c\leq p_{\varepsilon}(s)\leq\beta_{\varepsilon
}<b$ for $0\leq s\leq(T_{\varepsilon}-L_{\varepsilon})\varepsilon^{-1}$, by
(\ref{W near b}) we have that%
\[
p_{\varepsilon}^{\prime}(s)\geq(W(p_{\varepsilon}\left(  s\right)
))^{1/2}\geq\sigma(b-p_{\varepsilon}(s))>0,
\]
and so
\[
-\sigma\geq\frac{(b-p_{\varepsilon}(s))^{\prime}}{b-p_{\varepsilon}(s)}%
=(\log(b-p_{\varepsilon}(s)))^{\prime}.
\]
Upon integration, we get%
\[
0\leq b-p_{\varepsilon}(s)\leq(b-c)e^{-\sigma s}\leq(b-c)e^{-\sigma s}.
\]
In turn, again by (\ref{W near b}), for $s\in\lbrack0,(T_{\varepsilon
}-L_{\varepsilon})\varepsilon^{-1}]$,%
\begin{equation}
W(p_{\varepsilon}(s))\leq\sigma^{-2}(b-p_{\varepsilon}(s))^{2}\leq\sigma
^{-2}(b-c)^{2}e^{-2\sigma s}. \label{1d 204}%
\end{equation}
On the other hand,  we claim that there exists $C>0$ such that 
\begin{align}
-C\int_{-L_{\varepsilon}\varepsilon^{-1}}^{0}e^{2\sigma s}|s|\,ds\leq \int_{-L_{\varepsilon}\varepsilon^{-1}}^{0}2W(p_{\varepsilon}(s))s\,ds\leq0. \label{ineqWs}
\end{align}
As $W\geq 0$ and $s \leq 0$ it is immediate that 
\[
\int_{-L_{\varepsilon}\varepsilon^{-1}}^{0}2W(p_{\varepsilon}(s))s\,ds\leq0.
\] 
Additionally, by (\ref{W near a}) for $-L_{\varepsilon}\varepsilon^{-1}\leq s\leq0$, we have
that%
\[
p_{\varepsilon}^{\prime}(s)\geq(W(p_{\varepsilon}\left(  s\right)
))^{1/2}\geq\sigma(p_{\varepsilon}(s)-a)>0,
\]
and so
\[
(\log(p_{\varepsilon}(s)-a))^{\prime}=\frac{(p_{\varepsilon}(s)-a)^{\prime}%
}{p_{\varepsilon}(s)-a}\geq\sigma.
\]
Upon integration, we get%
\[
\log\frac{c-a}{p_{\varepsilon}(s)-a}\geq\sigma(0-s)
\]
and so%
\[
c-a\geq(p_{\varepsilon}(s)-a)e^{-s\sigma},
\]
which gives%
\[
0\leq p_{\varepsilon}(s)-a\leq(c-a)e^{\sigma s}.
\]
In turn, again by (\ref{W near a}), for $s\in\lbrack-L_{\varepsilon
}\varepsilon^{-1},0]$,%
\[
W(p_{\varepsilon}(s))\leq\sigma^{2}(p_{\varepsilon}(s)-a)^{2}\leq\sigma
^{2}(c-a)^{2}e^{2\sigma s}.
\]
This implies \eqref{ineqWs} and therefore, using \eqref{1d 204}, we obtain
\[
\mathcal{B}_{2,1}\leq C\int_{0}^{\infty}e^{-2\sigma s}s\,ds\frac
{\omega^{\prime}(0)}{|\log\varepsilon|}\leq\frac{C}{|\log\varepsilon|}%
\] 
for all $0<\varepsilon<\varepsilon_{0}$. By (\ref{1d L epsilon}) and the fact that $\omega^{\prime}(0)>0$,
\[
\mathcal{B}_{2,2}\leq C\frac{\varepsilon T_{\varepsilon}^{2}}{\varepsilon
^{2}|\log\varepsilon|}\leq C\varepsilon^{2}|\log^{2}\varepsilon|
\]
for all $0<\varepsilon<\varepsilon_{0}$.

\textbf{Step 3. }We estimate $\mathcal{C}$ in (\ref{1d 557a}). Observe that by
(\ref{1d 200a}), \eqref{1d 103aa}, and (\ref{1d L epsilon}),
\begin{align*}
\mathcal{C}  &  \leq\int_{0}^{T_{\varepsilon}}\left(  \frac{1}{\varepsilon
}W(v_{\varepsilon})+\varepsilon(v_{\varepsilon}^{\prime})^{2}\right)
\,dt\frac{|\omega^{\prime}|_{C^{0,d}}T_{\varepsilon}^{1+d}}{\varepsilon
|\log\varepsilon|}\\
&  \leq C\varepsilon^{d}|\log\varepsilon|^{d}\left(  C_{W}+C\varepsilon
|\log\varepsilon|\right)  \leq C\varepsilon^{d}|\log\varepsilon|^{d}%
\end{align*}
for all $0<\varepsilon<\varepsilon_{0}$.

\textbf{Step 4. }We estimate $\mathcal{D}$ in (\ref{1d 557a}). By
(\ref{W near b}) and (\ref{alpha epsilon and beta epsilon}), for $t\geq
T_{\varepsilon}$,
\[
\mathcal{D}=W(\beta_{\varepsilon})\int_{T_{\varepsilon}}^{T}\omega\,dt\frac
{1}{\varepsilon^{2}|\log\varepsilon|}\leq\sigma^{-2}(b-\beta_{\varepsilon
})^{2}\int_{0}^{T}\omega\,dt\frac{1}{\varepsilon^{2}|\log\varepsilon|}\leq
C\frac{\varepsilon^{2\gamma-2}}{|\log\varepsilon|}%
\]
for all $0<\varepsilon<\varepsilon_{0}$.

Combining the estimates for $\mathcal{A}$, $\mathcal{B}$, $\mathcal{C}$, and
$\mathcal{D}$ gives (\ref{1d limsup alpha=a}). In turn, letting first
$\varepsilon\rightarrow0^{+}$ and then $\eta\rightarrow0^{+}$ in
(\ref{1d limsup alpha=a}) proves (\ref{1d limsup alpha=a no eta}).\hfill
\end{proof}

\subsection{Properties of Minimizers of $G_{\varepsilon}$}

In this subsection, we study some qualitative properties of the minimizers of the
functional $G_{\varepsilon}$ defined in \eqref{1d functional}:%
\begin{equation}
G_{\varepsilon}(v):=\int_{I}(W(v(t))+\varepsilon^{2}(v^{\prime}(t))^{2}%
)\omega(t)\,dt,\quad v\in\,H^{1}(I), \label{1d functional G}%
\end{equation}
subject to the Dirichlet boundary conditions
\begin{equation}
v_{\varepsilon}(0)=\alpha_{\varepsilon},\quad v_{\varepsilon}(T)=\beta
_{\varepsilon}. \label{1d dirichlet 1}%
\end{equation}

Theorem \ref{theorem 1d EL}, Corollary \ref{corollary bounded derivative}, Theorem \ref{theorem monotonicity}, and Theorem \ref{theorem barrier 1d} have been proven in \cite{fonseca-kreutz-leoni2025I}, cf. \cite[Theorem 3.8]{fonseca-kreutz-leoni2025I}, \cite[Corollary 3.9]{fonseca-kreutz-leoni2025I}, \cite[Theorem 3.12]{fonseca-kreutz-leoni2025I},  and \cite[Theorem 3.10]{fonseca-kreutz-leoni2025I}. We state them for the convenience of the reader.

\begin{theorem}
\label{theorem 1d EL}Assume that $W$ satisfies
\eqref{W_Smooth}-\eqref{W' three zeroes}, that $\omega$ satisfies \eqref{etaSmooth}, and that $a\leq\alpha_{\varepsilon},$
$\beta_{\varepsilon}\leq b$. Then the functional $G_{\varepsilon}$ admits a
minimizer $v_{\varepsilon}\in$\thinspace$H^{1}(I)$. Moreover, $v_{\varepsilon
}\in C^{2}([0,T])$, $v_{\varepsilon}$ satisfies the Euler--Lagrange equations
\begin{equation}
2\varepsilon^{2}(v_{\varepsilon}^{\prime}(t)\omega(t))^{\prime}-W^{\prime
}(v_{\varepsilon}(t))\omega(t)=0, \label{1d euler lagrange}%
\end{equation}
and $v_{\varepsilon}\equiv a$, or $v_{\varepsilon}\equiv b$, or
\begin{equation}
a<v_{\varepsilon}(t)<b\quad\text{for all }t\in(0,T). \label{1d truncated}%
\end{equation}

\end{theorem}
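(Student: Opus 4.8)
The plan is to combine the direct method with elliptic regularity and a maximum‑principle argument. First I would establish existence of a minimizer $v_\varepsilon\in H^1(I)$: the admissible class $\{v\in H^1(I):v(0)=\alpha_\varepsilon,\ v(T)=\beta_\varepsilon\}$ is nonempty (it contains the affine function) and weakly closed in $H^1(I)$, and the functional $G_\varepsilon$ is coercive on it because $\varepsilon^2(\min\omega)\int_I(v')^2\,dt\le G_\varepsilon(v)$ controls the $\dot H^1$‑seminorm and the boundary condition pins down the mean; lower semicontinuity follows from convexity of $\xi\mapsto\xi^2$ (for the gradient term, using $\omega>0$) together with strong $L^1$‑convergence along a minimizing sequence (after passing to a subsequence by Rellich) for the $W$‑term via Fatou, noting $W\ge0$. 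Hence a minimizer exists.

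Next I would derive the Euler--Lagrange equation. For $\varphi\in C_c^\infty(I)$ the map $s\mapsto G_\varepsilon(v_\varepsilon+s\varphi)$ is differentiable (the integrand is $C^1$ in $v$ since $W\in C^{2,\alpha_0}$), and setting the derivative at $s=0$ to zero gives the weak form $\int_I\bigl(2\varepsilon^2 v_\varepsilon'\varphi'+W'(v_\varepsilon)\varphi\bigr)\omega\,dt=0$. Thus $\varepsilon^2 v_\varepsilon'\omega\in H^1(I)$ with distributional derivative $\tfrac12 W'(v_\varepsilon)\omega\in L^\infty(I)\subset C^0(I)$ (since $v_\varepsilon$ is bounded by the trace bounds—see below—and $W'$ is continuous), so $v_\varepsilon'\omega\in C^1$; dividing by $\omega\in C^{1,d}$, which is bounded below, yields $v_\varepsilon\in C^1$, and then bootstrapping $(v_\varepsilon'\omega)'=\tfrac1{2\varepsilon^2}W'(v_\varepsilon)\omega\in C^1$ gives $v_\varepsilon\in C^2([0,T])$ and \eqref{1d euler lagrange} classically.

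Finally I would prove the trichotomy $v_\varepsilon\equiv a$, $v_\varepsilon\equiv b$, or $a<v_\varepsilon<b$ on $(0,T)$. The key is a comparison/truncation argument: since $W'(s)<0$ for $s<a$ and $W'(s)>0$ for $s>b$ by \eqref{WGurtin_Assumption}–\eqref{W' three zeroes}, truncating $v_\varepsilon$ to $\max\{a,\min\{b,v_\varepsilon\}\}$ does not increase $G_\varepsilon$ and is still admissible because $a\le\alpha_\varepsilon,\beta_\varepsilon\le b$; by uniqueness of the $C^2$ critical point structure (or by a strict‑inequality argument on the set where truncation is active) one concludes $a\le v_\varepsilon\le b$ on $[0,T]$. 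Then one applies the strong maximum principle to \eqref{1d euler lagrange}: if $v_\varepsilon(t_0)=b$ for some interior $t_0$, write the equation near $t_0$ as a linear ODE $2\varepsilon^2(w'\omega)'=q(t)w\,\omega$ for $w=b-v_\varepsilon\ge0$, where $q(t)=\int_0^1 W''(b-\theta w(t))\,d\theta$ is continuous; since $w\ge0$, $w(t_0)=0$, $w'(t_0)=0$, uniqueness for this linear Cauchy problem forces $w\equiv0$, i.e. $v_\varepsilon\equiv b$, and symmetrically for $a$. I expect this last step—the rigorous passage from "truncation doesn't increase energy" to the pointwise bounds, and then the clean application of the strong maximum principle at the wells—to be the main technical point, though since the result is quoted verbatim from \cite[Theorem 3.8]{fonseca-kreutz-leoni2025I} the details can be imported from there.
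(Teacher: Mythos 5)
Your proposal is correct and follows the standard route one would expect: direct method (using the 1D embedding $H^1(I)\subset C([0,T])$ for lower semicontinuity of the $W$-term), bootstrap of the weak Euler--Lagrange equation to $C^2([0,T])$ via $\omega\in C^{1,d}$ bounded below, the truncation-with-strict-energy-decrease argument giving $a\le v_\varepsilon\le b$, and linearization of \eqref{1d euler lagrange} at a well plus Cauchy uniqueness for the resulting linear ODE to obtain the trichotomy. Note that the present paper does not prove this theorem at all but imports it from \cite[Theorem 3.8]{fonseca-kreutz-leoni2025I}, so there is no in-paper argument to compare against; your sketch is a sound reconstruction (only the phrase ``uniqueness of the $C^2$ critical point structure'' is vague, but the strict-inequality alternative you give is the correct justification).
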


\begin{corollary}
\label{corollary bounded derivative}Assume that $W$ satisfies
\eqref{W_Smooth}-\eqref{W' three zeroes}, that $\omega$ satisfies \eqref{etaSmooth}, and that $a\leq\alpha_{\varepsilon},$
$\beta_{\varepsilon}\leq b$. Let $v_{\varepsilon}$ be the minimizer of
$G_{\varepsilon}$ obtained in Theorem \ref{theorem 1d EL}. Then there exists a
constant $C_{0}>0$, depending only on $\omega$, $T$, $a$, $b$, and $W$, such
that%
\[
|v_{\varepsilon}^{\prime}(t)|\leq\frac{C_{0}}{\varepsilon}\quad\text{for all
}t\in I
\]
and for every $0<\varepsilon<1$.
\end{corollary}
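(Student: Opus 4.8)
\emph{Proof proposal.} The plan is to build an almost-conserved quantity along the Euler--Lagrange flow and to pin it down at one point using an energy bound. By Theorem \ref{theorem 1d EL}, either $v_{\varepsilon}\equiv a$ or $v_{\varepsilon}\equiv b$ — in which case $v_{\varepsilon}'\equiv0$ and there is nothing to prove — or $a<v_{\varepsilon}<b$ on $(0,T)$ with $v_{\varepsilon}\in C^{2}([0,T])$ solving \eqref{1d euler lagrange}; assume the latter. Set
\[
P(t):=\varepsilon^{2}(v_{\varepsilon}'(t))^{2}-W(v_{\varepsilon}(t)),\qquad t\in[0,T].
\]
Differentiating, $P'=v_{\varepsilon}'\bigl(2\varepsilon^{2}v_{\varepsilon}''-W'(v_{\varepsilon})\bigr)$, and rewriting \eqref{1d euler lagrange} as $2\varepsilon^{2}v_{\varepsilon}''=W'(v_{\varepsilon})-2\varepsilon^{2}v_{\varepsilon}'\,\omega'/\omega$ gives $P'=-2(\omega'/\omega)\varepsilon^{2}(v_{\varepsilon}')^{2}=-2(\omega'/\omega)(P+W(v_{\varepsilon}))$, i.e.
\[
(\omega^{2}P)'=-2\,\omega\,\omega'\,W(v_{\varepsilon}).
\]
Since $0\le W(v_{\varepsilon})\le M:=\max_{[a,b]}W$ and $\omega,\omega'$ are bounded on $[0,T]$ by \eqref{etaSmooth}, the right-hand side is bounded by a constant depending only on $\omega,T,a,b,W$; hence $t\mapsto\omega^{2}(t)P(t)$ varies by at most such a constant over $[0,T]$.

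It remains to control $P$ at one point. For this I would use minimality: the affine interpolant $\ell(t):=\alpha_{\varepsilon}+\tfrac{t}{T}(\beta_{\varepsilon}-\alpha_{\varepsilon})$ satisfies \eqref{1d dirichlet 1} and $a\le\ell\le b$, so
\[
\int_{I}\varepsilon^{2}(v_{\varepsilon}')^{2}\omega\,dt\le G_{\varepsilon}(v_{\varepsilon})\le G_{\varepsilon}(\ell)\le\|\omega\|_{\infty}\Bigl(MT+\tfrac{(b-a)^{2}}{T}\varepsilon^{2}\Bigr)\le C,
\]
and since $\omega\ge\min_{[0,T]}\omega>0$ this yields $\int_{I}(v_{\varepsilon}')^{2}\,dt\le C/\varepsilon^{2}$. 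Consequently there is $t_{0}\in[0,T]$ with $(v_{\varepsilon}'(t_{0}))^{2}\le\tfrac1T\int_{I}(v_{\varepsilon}')^{2}\,dt\le C/(T\varepsilon^{2})$, so that $|P(t_{0})|\le\varepsilon^{2}(v_{\varepsilon}'(t_{0}))^{2}+W(v_{\varepsilon}(t_{0}))\le C$.

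Combining the two steps, integrating $(\omega^{2}P)'=-2\omega\omega'W(v_{\varepsilon})$ from $t_{0}$ to an arbitrary $t\in[0,T]$ gives $|\omega^{2}(t)P(t)|\le\omega^{2}(t_{0})|P(t_{0})|+C\le C$, and dividing by $\omega^{2}\ge(\min_{[0,T]}\omega)^{2}>0$ yields $|P(t)|\le C$ for all $t\in[0,T]$. Therefore $\varepsilon^{2}(v_{\varepsilon}'(t))^{2}=P(t)+W(v_{\varepsilon}(t))\le C+M$, which is the claimed estimate with $C_{0}:=(C+M)^{1/2}$. I do not anticipate a real obstacle; the only points requiring a little care are the derivation of the identity $(\omega^{2}P)'=-2\omega\omega'W(v_{\varepsilon})$ from the \emph{weighted} Euler--Lagrange equation (the genuine first integral is destroyed by the non-constant $\omega$) and the observation that the resulting defect is nevertheless controlled by the $\varepsilon$-independent bound $0\le W(v_{\varepsilon})\le M$ available from $a\le v_{\varepsilon}\le b$.
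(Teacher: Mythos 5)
Your proof is correct, and all the steps check out: the identity $(\omega^{2}P)'=-2\,\omega\,\omega'\,W(v_{\varepsilon})$ follows from \eqref{1d euler lagrange} exactly as you compute, the affine competitor gives the $\varepsilon$-uniform energy bound, and the mean-value point $t_{0}$ pins down $P$ so that the bounded-variation-of-$\omega^{2}P$ argument yields $|P|\leq C$ on all of $[0,T]$, hence $\varepsilon^{2}(v_{\varepsilon}')^{2}\leq C+\max_{[a,b]}W$; the constants depend only on $\omega$, $T$, $a$, $b$, $W$, as required. Note that the present paper does not prove this corollary at all — it is quoted from \cite[Corollary 3.9]{fonseca-kreutz-leoni2025I} — so a literal comparison is not possible; still, your argument is essentially the same weighted ``almost first integral'' machinery that this paper deploys elsewhere for the same minimizer: multiplying the Euler--Lagrange equation by $v_{\varepsilon}'$ gives exactly \eqref{1d 71}, i.e.\ your $P'=-2(\omega'/\omega)\varepsilon^{2}(v_{\varepsilon}')^{2}$, and the integrated form \eqref{2d 71} together with the endpoint bound \eqref{ineq:initial_value_veps} is how the authors control $\varepsilon(v_{\varepsilon}')^{2}-\tfrac{1}{\varepsilon}W(v_{\varepsilon})$ in Theorem \ref{theorem 1d properties minimizers}. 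Your variant has two small advantages worth keeping in mind: the exact identity for $\omega^{2}P$ absorbs the weight without any sign or monotonicity assumption on $\omega'$ (only \eqref{etaSmooth} is used), and anchoring $P$ at an interior mean-value point via the affine competitor avoids having to estimate $v_{\varepsilon}'(0)$ separately, which is the more delicate step in the cited approach.
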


Next, we recall some differential inequalities for $v_{\varepsilon}$.

\begin{theorem}
\label{theorem monotonicity}Assume that $W$ satisfies
\eqref{W_Smooth}-\eqref{W' three zeroes}, that $\omega$ satisfies \eqref{etaSmooth}, and that $a\leq\alpha_{\varepsilon},$
$\beta_{\varepsilon}\leq b$. Let $v_{\varepsilon}$ be the minimizer of
$G_{\varepsilon}$ obtained in Theorem \ref{theorem 1d EL} and let $\alpha
_{-},\beta_{-}$ be given as in \eqref{alpha and beta minus}. Then there exists
a constant $C>0$ such that
\begin{equation}
\varepsilon(v_{\varepsilon}^{\prime}(0))^{2}-\frac{1}{\varepsilon}%
W(\alpha_{\varepsilon})\leq C \label{ineq:initial_value_veps}
\end{equation}
for all $0<\varepsilon<1$. Moreover, there exist a constant $\tau_{0}>0$,
depending only on $\omega$, $T$, $a$, $b$, $\alpha_{-}$, $\beta_{-}$ and $W$,
such that
\begin{equation}
\frac{1}{2}\sigma^{2}(v_{\varepsilon}(t)-a)^{2}\leq\varepsilon^{2}%
(v_{\varepsilon}^{\prime}(t))^{2}\leq\frac{3}{2}\sigma^{-2}(v_{\varepsilon
}(t)-a)^{2} \label{1d v' near a}%
\end{equation}
whenever $a+\tau_{0}\varepsilon^{1/2}\leq v_{\varepsilon}(t)\leq\beta_{-}$
and
\begin{equation}
\frac{1}{2}\sigma^{2}(b-v_{\varepsilon}(t))^{2}\leq\varepsilon^{2}%
(v_{\varepsilon}^{\prime}(t))^{2}\leq\frac{3}{2}\sigma^{-2}(b-v_{\varepsilon
}(t))^{2} \label{1d v' near b}%
\end{equation}
whenever $\alpha_{-}\leq v_{\varepsilon}(t)\leq b-\tau_{0}\varepsilon^{1/2}$,
where $\sigma>0$ is the constant given in Remark \ref{remark W near b}.
\end{theorem}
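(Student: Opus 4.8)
The plan is to analyze the weighted Hamiltonian attached to the Euler--Lagrange equation \eqref{1d euler lagrange}. If $v_\varepsilon$ is the minimizer from Theorem \ref{theorem 1d EL}, set
\[
H_\varepsilon(t):=\varepsilon^{2}(v_\varepsilon'(t))^{2}-W(v_\varepsilon(t)),
\]
which belongs to $C^{1}([0,T])$ since $v_\varepsilon\in C^{2}([0,T])$ and $\omega\in C^{1}$. Multiplying \eqref{1d euler lagrange} by $v_\varepsilon'$ and dividing by $\omega$ yields the pointwise identity
\[
H_\varepsilon'(t)=-2\,\frac{\omega'(t)}{\omega(t)}\,\varepsilon^{2}(v_\varepsilon'(t))^{2}.
\]
Since $\varepsilon(v_\varepsilon'(0))^{2}-\tfrac{1}{\varepsilon}W(\alpha_\varepsilon)=\tfrac{1}{\varepsilon}H_\varepsilon(0)$ and $\varepsilon^{2}(v_\varepsilon'(t))^{2}=W(v_\varepsilon(t))+H_\varepsilon(t)$, both conclusions of the theorem reduce to the single uniform estimate $\sup_{[0,T]}|H_\varepsilon|\le C\varepsilon$ for all $0<\varepsilon<\varepsilon_{0}$. (When $v_\varepsilon\equiv a$ or $v_\varepsilon\equiv b$ every assertion is trivial or vacuous, so by Theorem \ref{theorem 1d EL} we may assume \eqref{1d truncated}.)

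To prove this estimate I would combine two facts. First, comparing $v_\varepsilon$ with the Lipschitz competitor that is affine from $\alpha_\varepsilon$ to $b$ on $[0,\varepsilon]$, equals $b$ on $[\varepsilon,T-\varepsilon]$, and is affine from $b$ to $\beta_\varepsilon$ on $[T-\varepsilon,T]$: since $a\le\alpha_\varepsilon,\beta_\varepsilon\le b$ and $W(b)=0$, its energy is $O(\varepsilon)$, so minimality gives $G_\varepsilon(v_\varepsilon)\le C\varepsilon$ for $\varepsilon<\varepsilon_{0}:=T/2$. As $\min_{[0,T]}\omega>0$, this forces
\[
0\le\int_{I}\varepsilon^{2}(v_\varepsilon')^{2}\,dt\le C\varepsilon,\qquad 0\le\int_{I}W(v_\varepsilon)\,dt\le C\varepsilon,
\]
hence $\bigl|\int_{I}H_\varepsilon\,\omega\,dt\bigr|\le C\varepsilon$. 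Second, from the formula for $H_\varepsilon'$ and $\min\omega>0$,
\[
\max_{[0,T]}H_\varepsilon-\min_{[0,T]}H_\varepsilon\le\int_{I}|H_\varepsilon'|\,dt\le C\int_{I}\varepsilon^{2}(v_\varepsilon')^{2}\,dt\le C\varepsilon.
\]
Writing $H_\varepsilon(t)\int_{I}\omega\,ds=\int_{I}H_\varepsilon\,\omega\,ds+\int_{I}(H_\varepsilon(t)-H_\varepsilon(s))\,\omega\,ds$ and inserting the two preceding displays yields $|H_\varepsilon(t)|\le C\varepsilon$ for every $t\in[0,T]$; evaluating at $t=0$ is exactly \eqref{ineq:initial_value_veps}.

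For the differential inequalities, use $\varepsilon^{2}(v_\varepsilon'(t))^{2}=W(v_\varepsilon(t))+H_\varepsilon(t)$ with $|H_\varepsilon(t)|\le C_{2}\varepsilon$. When $a+\tau_{0}\varepsilon^{1/2}\le v_\varepsilon(t)\le\beta_{-}$, the quadratic bounds $\sigma^{2}(v_\varepsilon-a)^{2}\le W(v_\varepsilon)\le\sigma^{-2}(v_\varepsilon-a)^{2}$ from \eqref{W near a} together with $(v_\varepsilon-a)^{2}\ge\tau_{0}^{2}\varepsilon$ give
\[
\Bigl(\sigma^{2}-\tfrac{C_{2}}{\tau_{0}^{2}}\Bigr)(v_\varepsilon-a)^{2}\le\varepsilon^{2}(v_\varepsilon')^{2}\le\Bigl(\sigma^{-2}+\tfrac{C_{2}}{\tau_{0}^{2}}\Bigr)(v_\varepsilon-a)^{2},
\]
so taking $\tau_{0}$ large (in terms of $\sigma$, hence of $\alpha_{-},\beta_{-},W$, and of $C_{2}$) produces the coefficients $\tfrac{1}{2}\sigma^{2}$ and $\tfrac{3}{2}\sigma^{-2}$ of \eqref{1d v' near a}; \eqref{1d v' near b} is identical using \eqref{W near b}. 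For $\varepsilon\in[\varepsilon_{0},1)$ the competitor only gives $G_\varepsilon(v_\varepsilon)\le C$ and hence $|H_\varepsilon|\le C$; enlarging $\tau_{0}$ so that $a+\tau_{0}\varepsilon^{1/2}>\beta_{-}$ and $b-\tau_{0}\varepsilon^{1/2}<\alpha_{-}$ there renders the admissible ranges empty, while \eqref{ineq:initial_value_veps} reads $\tfrac{1}{\varepsilon}H_\varepsilon(0)\le C/\varepsilon_{0}$ in that regime.

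The main obstacle is precisely the uniform bound on $H_\varepsilon$: because $\omega$ is not constant, $H_\varepsilon$ is not conserved, but its total variation is controlled by the Dirichlet energy $\int_{I}\varepsilon^{2}(v_\varepsilon')^{2}$, which the energy comparison pins at $O(\varepsilon)$; pairing this oscillation bound with the $L^{1}_{\omega}$-smallness of $H_\varepsilon$ upgrades it to pointwise $O(\varepsilon)$ control. Everything afterward is the quadratic behavior of $W$ at the wells from Remark \ref{remark W near b} together with the bookkeeping choice of $\tau_{0}$.
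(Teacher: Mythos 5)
Your argument is correct, and I can only compare it in spirit: this paper does not prove Theorem \ref{theorem monotonicity} but cites the companion paper \cite[Theorem 3.12]{fonseca-kreutz-leoni2025I} for it. Your route — the weighted Hamiltonian $H_\varepsilon=\varepsilon^{2}(v_\varepsilon')^{2}-W(v_\varepsilon)$ with $H_\varepsilon'=-2\tfrac{\omega'}{\omega}\varepsilon^{2}(v_\varepsilon')^{2}$ — is exactly the identity the paper itself uses later (see \eqref{1d 71} and \eqref{2d 71} in Step 5 of Theorem \ref{theorem 1d properties minimizers}), and your three-step upgrade (energy comparison with a piecewise-affine competitor giving $G_\varepsilon(v_\varepsilon)\le C\varepsilon$, hence $L^1_\omega$-smallness of $H_\varepsilon$; total-variation control of $H_\varepsilon$ by the Dirichlet energy; averaging against $\omega$ to get the pointwise bound $|H_\varepsilon|\le C\varepsilon$) is a clean and correct way to obtain both \eqref{ineq:initial_value_veps} and, after absorbing $C\varepsilon\le \tfrac{C}{\tau_0^{2}}(v_\varepsilon-a)^{2}$ (resp. $(b-v_\varepsilon)^{2}$), the two-sided bounds \eqref{1d v' near a}--\eqref{1d v' near b} with the stated dependence of $\tau_0$. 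The reduction of all three conclusions to $\sup|H_\varepsilon|\le C\varepsilon$, the vacuity of the constrained ranges for $\varepsilon$ bounded away from $0$ after enlarging $\tau_0$, and the treatment of the constant minimizers are all handled correctly.

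Two minor points, neither a gap. First, for $\varepsilon\in[T/2,1)$ your specific competitor (affine on $[0,\varepsilon]$ and $[T-\varepsilon,T]$) may not fit inside $[0,T]$; replace it by the single affine interpolation of $\alpha_\varepsilon,\beta_\varepsilon$ on all of $[0,T]$, which still yields $G_\varepsilon(v_\varepsilon)\le C$ and hence $|H_\varepsilon|\le C$ as you use. Second, it is worth stating explicitly that $\int_I\omega\,dt\ge T\min_{[0,T]}\omega>0$ when you divide in the averaging step, so the final constant retains the claimed dependence on $T$ and $\omega$ only.
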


\begin{theorem}
\label{theorem barrier 1d}Assume that $W$ satisfies
\eqref{W_Smooth}-\eqref{W' three zeroes}, that $\omega$ satisfies \eqref{etaSmooth}, and that that $a\leq\alpha_{\varepsilon},$
$\beta_{\varepsilon}\leq b$. Let $v_{\varepsilon}$ be the minimizer of
$G_{\varepsilon}$ obtained in Theorem \ref{theorem 1d EL} and let
\begin{align}
A_{\varepsilon}  &  :=\{t\in\lbrack0,T]:~\alpha_{\varepsilon}+\varepsilon
^{k}\leq v_{\varepsilon}(t)\leq\alpha_{-}\},\label{1d set A epsilon}\\
B_{\varepsilon}  &  :=\{t\in\lbrack0,T]:~\beta_{-}\leq v_{\varepsilon}%
(t)\leq\beta_{\varepsilon}-\varepsilon^{k}\}. \label{1d set B epsilon}%
\end{align}
Then there exist $C>0$ and $0<\varepsilon_{0}<1$ depending only on $a_{-}$,
$\beta_{-}$, $T$, $\omega$, $W$, such that if $I_{\varepsilon}$ is a maximal
subinterval of $A_{\varepsilon}$ or $B_{\varepsilon}$, then
\begin{equation}
\operatorname*{diam}I_{\varepsilon}\leq C\varepsilon|\log\varepsilon|
\label{1d diam I epsilon}%
\end{equation}
for all $0<\varepsilon<\varepsilon_{0}$.
\end{theorem}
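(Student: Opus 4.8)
The plan is to argue by comparison with an explicit $\cosh$-type barrier built from the linearization of $W$ at the wells. I treat the set $A_{\varepsilon}$ in \eqref{1d set A epsilon}; the set $B_{\varepsilon}$ in \eqref{1d set B epsilon} is handled in the same way, linearizing at $b$ and using \eqref{W near b} in place of \eqref{W near a}. By Theorem \ref{theorem 1d EL} we may assume $a<v_{\varepsilon}<b$ on $(0,T)$ and $v_{\varepsilon}\in C^{2}([0,T])$, so that \eqref{1d euler lagrange} holds; we may also assume $\alpha_{\varepsilon}+\varepsilon^{k}<\alpha_{-}$, since otherwise $A_{\varepsilon}$ contains no nondegenerate interval (an interval on which $v_{\varepsilon}\equiv\alpha_{-}$ is impossible by \eqref{1d euler lagrange}, as $W'(\alpha_{-})\neq0$).

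First I would record the structure of $v_{\varepsilon}$ on $A_{\varepsilon}$. There $a<\alpha_{\varepsilon}+\varepsilon^{k}\le v_{\varepsilon}\le\alpha_{-}<c$, so $W'(v_{\varepsilon})>0$ by \eqref{W' three zeroes} and \eqref{alpha and beta minus}; moreover, since $W'(a)=0$, $W''(a)>0$ and $W'>0$ on $(a,\alpha_{-}]$, there is a constant $\lambda>0$, depending only on $W$ and $\alpha_{-}$, with $W'(s)\ge\lambda(s-a)$ for all $s\in(a,\alpha_{-}]$. From \eqref{1d euler lagrange}, $(v_{\varepsilon}'\omega)'=\tfrac{1}{2\varepsilon^{2}}W'(v_{\varepsilon})\omega>0$ on $A_{\varepsilon}$, so on any maximal subinterval $I_{\varepsilon}=[t_{1},t_{2}]$ of $A_{\varepsilon}$ the function $v_{\varepsilon}'\omega$ is strictly increasing; since $\omega>0$, $v_{\varepsilon}'$ then vanishes at most once in $I_{\varepsilon}$, at a strict minimum $t_{0}$ of $v_{\varepsilon}$, and $I_{\varepsilon}$ is the union of at most two intervals on each of which $v_{\varepsilon}$ is strictly monotone. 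It suffices to bound the length of each such monotone piece by $C\varepsilon|\log\varepsilon|$.

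On a monotone piece write $w:=v_{\varepsilon}-a$, so that $\varepsilon^{2}(w'\omega)'\ge\tfrac{\lambda}{2}w\,\omega$ with $\varepsilon^{k}\le w\le\alpha_{-}-a$, let $s_{0}$ be the endpoint at which $w$ is smallest, and put $w_{0}:=w(s_{0})\ge\varepsilon^{k}$. I would compare $w$ with $\phi(t):=\tfrac12 w_{0}\cosh\!\big(\mu(t-s_{0})/\varepsilon\big)$, where $\mu>0$ is a small constant depending only on $\lambda$ and $\omega$. Since $|\phi'|\le(\mu/\varepsilon)\phi$, one gets $\varepsilon^{2}(\phi'\omega)'=\mu^{2}\phi\,\omega+\varepsilon^{2}\phi'\omega'\le\big(\mu^{2}\max\omega+\varepsilon\mu\|\omega'\|_{\infty}\big)\phi\le\tfrac{\lambda}{2}\phi\,\omega$ once $\mu^{2}\max\omega\le\tfrac{\lambda}{4}\min\omega$ and $\varepsilon$ is small; the term $\varepsilon^{2}\phi'\omega'$ is harmless because it carries an extra power of $\varepsilon$. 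Hence $\psi:=w-\phi$ satisfies $\varepsilon^{2}(\psi'\omega)'\ge\tfrac{\lambda}{2}\psi\,\omega$ on the piece, with $\psi(s_{0})=\tfrac12 w_{0}>0$ and $\psi'(s_{0})$ either zero (when $s_{0}=t_{0}$) or of the sign needed to run a comparison away from $s_{0}$. A one-sided maximum principle for $\zeta\mapsto\varepsilon^{2}(\zeta'\omega)'-\tfrac{\lambda}{2}\zeta\omega$ then gives $\psi>0$: if $\psi$ first touched zero at some $\bar t$, then $\psi'\omega$ would be strictly monotone between $s_{0}$ and $\bar t$, forcing $\psi(\bar t)>\psi(s_{0})>0$. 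Therefore $w(t)\ge\tfrac14\varepsilon^{k}e^{\mu|t-s_{0}|/\varepsilon}$ on the piece, and evaluating at the endpoint $t_{\ast}$ furthest from $s_{0}$, together with $w(t_{\ast})\le\alpha_{-}-a$, yields $|t_{\ast}-s_{0}|\le C\varepsilon|\log\varepsilon|$. Summing over the at most two monotone pieces of $I_{\varepsilon}$ gives \eqref{1d diam I epsilon}; for $B_{\varepsilon}$ the same computation applies to $b-v_{\varepsilon}$, using that $v_{\varepsilon}\in[\beta_{-},\beta_{\varepsilon}-\varepsilon^{k}]\subseteq(c,b)$ there and hence $-W'(v_{\varepsilon})\ge\lambda'(b-v_{\varepsilon})$.

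The hard part is not any individual estimate but assembling the geometry: one must use $\alpha_{-}<c$ to get $W'(v_{\varepsilon})>0$ on $A_{\varepsilon}$ and hence monotonicity of $v_{\varepsilon}'\omega$, so that there is a single turning point; one must center the even barrier $\cosh$ exactly at the point $s_{0}$ where $w$ is smallest, so the Neumann-type sign condition on $\psi'(s_{0})$ holds on both monotone halves; and one must use the cutoff $v_{\varepsilon}\ge\alpha_{\varepsilon}+\varepsilon^{k}$ from the definition of $A_{\varepsilon}$ precisely as the lower bound $w_{0}\ge\varepsilon^{k}$ feeding the logarithm. An alternative avoiding barriers would integrate the differential inequalities \eqref{1d v' near a}--\eqref{1d v' near b} of Theorem \ref{theorem monotonicity} where $|v_{\varepsilon}-a|\ge\tau_{0}\varepsilon^{1/2}$ and control the remaining very-near region via the almost-conserved quantity $\varepsilon^{2}(v_{\varepsilon}')^{2}-W(v_{\varepsilon})$, whose derivative equals $-2\varepsilon^{2}(v_{\varepsilon}')^{2}\omega'/\omega$, together with Corollary \ref{corollary bounded derivative}; the barrier argument is cleaner since it treats the whole strip at once.
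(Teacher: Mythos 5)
Your barrier argument is correct, and it cannot be compared line by line with a proof in this paper, because the paper does not reprove Theorem \ref{theorem barrier 1d}: it is imported from \cite[Theorem 3.10]{fonseca-kreutz-leoni2025I}. On its own merits your proof goes through: on a maximal subinterval of $A_{\varepsilon}$ one has $v_{\varepsilon}\in[\alpha_{\varepsilon}+\varepsilon^{k},\alpha_{-}]\subset(a,c)$, so \eqref{1d euler lagrange} gives $(v_{\varepsilon}'\omega)'>0$, hence at most one turning point and at most two monotone pieces; the linearization $W'(s)\ge\lambda(s-a)$ on $(a,\alpha_{-}]$ is legitimate since $W''(a)>0$ and $W'>0$ there; and the $\cosh$ supersolution centered at the minimizing endpoint $s_{0}$ works because $\phi'(s_{0})=0$ while $w'(s_{0})$ points uphill away from $s_{0}$, so $\psi'\omega$ starts with the favorable sign and, being strictly monotone up to a putative first touching point (from $\varepsilon^{2}(\psi'\omega)'\ge\tfrac{\lambda}{2}\psi\omega>0$ there), forces $\psi$ to stay positive — you should state this sign condition explicitly for the left-sided piece as well, but it is exactly symmetric, and the $\varepsilon^{2}\phi'\omega'$ term is indeed absorbed for $\varepsilon$ small since $\omega\in C^{1,d}$. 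The bound $w\ge\tfrac14\varepsilon^{k}e^{\mu|t-s_{0}|/\varepsilon}$ together with $w\le\alpha_{-}-a$ then caps each monotone piece at length of order $\tfrac{k}{\mu}\varepsilon|\log\varepsilon|$, and the $B_{\varepsilon}$ case is symmetric via $b-v_{\varepsilon}$ and $\beta_{-}>c$. Two minor remarks: your constant necessarily depends on $k$ (through $\log\varepsilon^{-k}=k|\log\varepsilon|$), a dependence not listed in the statement but harmless and consistent with how the estimate is used later (constants in Theorem \ref{theorem 1d properties minimizers} are allowed to depend on $k$); and the alternative you sketch at the end — integrating the first-integral inequalities of Theorem \ref{theorem monotonicity} away from the wells and handling the $O(\varepsilon^{1/2})$-neighborhood separately with Corollary \ref{corollary bounded derivative} — is closer in spirit to the Sternberg--Zumbrun/Caffarelli--Cordoba machinery the authors lean on elsewhere, whereas your comparison-function proof treats the whole strip at once and avoids the near-well threshold $\tau_{0}\varepsilon^{1/2}$ entirely, which is a genuine simplification.
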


Next, we strengthen the hypotheses on the Dirichlet data $\alpha_{\varepsilon
}$ and $\beta_{\varepsilon}$ and derive additional properties of minimizers.

Given $0<\eta<\frac{1}{4}$, by Taylor's formula and the fact that
$W^{\prime\prime}(a)>0$, we can find $\delta_{\eta}>0$ such such that%
\begin{equation}
\frac{1}{2}W^{\prime\prime}(a)(1-\eta)(s-a)^{2}\leq W(s)\leq\frac{1}%
{2}W^{\prime\prime}(a)(1+\eta)(s-a)^{2} \label{2d taylor}%
\end{equation}
for all $a\leq s\leq a+\delta_{\eta}$.

\begin{theorem}
\label{theorem 1d properties minimizers}Assume that $W$ satisfies
\eqref{W_Smooth}-\eqref{W' three zeroes}, that $\alpha_{-},\beta_{-}$ satisfy \eqref{alpha and beta minus}, and that $\omega$ satisfies \eqref{etaSmooth} and is strictly increasing with $\omega^{\prime
}(0)>0$. Let $a\leq\alpha_{\varepsilon},\,\beta_{\varepsilon}\leq b$ satisfy
\eqref{alpha epsilon and beta epsilon} and let $v_{\varepsilon}$ be the
minimizer of $G_{\varepsilon}$ obtained in Theorem \ref{theorem 1d EL}. Given
$k\in\mathbb{N}$ with $k\geq\gamma$, there exist $0<\varepsilon_{0}<1$, $C>0$
depending only on $\alpha_{-}$, $\beta_{-}$, $k$, $A_{0}$, $B_{0}$, $T$,
$\omega$, $W$, such that, for all $0<\varepsilon<\varepsilon_{0}$, the
following properties hold:

\begin{enumerate}
\item[(i)] If $T_{\varepsilon}$ is the first time such that $v_{\varepsilon
}=\beta_{\varepsilon}-\varepsilon^{k}$, then
\begin{equation}
T_{\varepsilon}\leq C\varepsilon|\log\varepsilon|. \label{1d T epsilon}%
\end{equation}

\item[(ii)] Let $0<\eta<\frac{1}{4}$, let $\delta_{\eta}$ be as in
\eqref{2d taylor}, and let $S_{\varepsilon,\eta}$ be the first time such that
$v_{\varepsilon}=a+\delta_{\eta}$. Then there exists a constant $C_{\eta}>0$,
depending on $\eta$, $\alpha_{-}$, $\beta_{-}$, $k$, $A_{0}$, $B_{0}$, $T$,
$\omega$, $W$, such that%
\begin{equation}
S_{\varepsilon,\eta}\geq\frac{1}{2^{1/2}(W^{\prime\prime}(a))^{1/2}}\left(
\varepsilon|\log\varepsilon|-\eta\varepsilon|\log\varepsilon|\right)
-C_{\eta}\varepsilon. \label{1d S epsilon}%
\end{equation}

\end{enumerate}
\end{theorem}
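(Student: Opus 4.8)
The plan is to treat the two parts by different means: (ii) follows from a one–dimensional comparison built on a first–integral inequality, while (i) is more delicate and requires extracting an ``essential monotonicity'' of $v_\varepsilon$ from the energy bound supplied by the $\Gamma$–limsup construction. For part (ii), set $H_\varepsilon(t):=\varepsilon^2(v_\varepsilon'(t))^2-W(v_\varepsilon(t))$. Multiplying the Euler--Lagrange equation \eqref{1d euler lagrange} by $v_\varepsilon'$ and rearranging gives $\omega H_\varepsilon'=-2\varepsilon^2(v_\varepsilon')^2\omega'\le0$ since $\omega$ is increasing, so $H_\varepsilon$ is non‑increasing; together with \eqref{ineq:initial_value_veps}, which reads $H_\varepsilon(0)\le C\varepsilon$, this yields $\varepsilon^2(v_\varepsilon'(t))^2\le W(v_\varepsilon(t))+C\varepsilon$ for all $t$. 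On $[0,S_{\varepsilon,\eta}]$ one has $a\le v_\varepsilon\le a+\delta_\eta$ (the lower bound by Theorem \ref{theorem 1d EL}, since $v_\varepsilon\not\equiv a$ for $\varepsilon$ small; the upper bound by definition of the first hitting time), so \eqref{2d taylor} gives $\varepsilon^2(v_\varepsilon')^2\le c_0(v_\varepsilon-a)^2+C\varepsilon$ with $c_0:=\tfrac12W''(a)(1+\eta)$. Dividing by $(c_0(v_\varepsilon-a)^2+C\varepsilon)^{1/2}$, integrating over $[0,S_{\varepsilon,\eta}]$ and using $\int|\cdot|\ge|\int\cdot|$ gives
\[
\frac{S_{\varepsilon,\eta}}{\varepsilon}\ \ge\ \int_{\alpha_\varepsilon}^{a+\delta_\eta}\frac{ds}{(c_0(s-a)^2+C\varepsilon)^{1/2}}\ \ge\ \frac{1}{2c_0^{1/2}}\,|\log\varepsilon|-C_\eta ,
\]
the last inequality being the computation in Step~1 of Proposition \ref{proposition asymptotic behavior} (with $\varepsilon$ there replaced by $C\varepsilon$), the shift of the lower endpoint from $a$ to $\alpha_\varepsilon$ costing only $(\alpha_\varepsilon-a)/(C\varepsilon)^{1/2}=O(\varepsilon^{\gamma-1/2})$. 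Since $0<\eta<\tfrac14$ forces $(1-\eta)^2(1+\eta)<1$, i.e.\ $\tfrac{1}{2c_0^{1/2}}=(2W''(a)(1+\eta))^{-1/2}\ge(1-\eta)(2W''(a))^{-1/2}$, multiplying by $\varepsilon$ yields \eqref{1d S epsilon}. Note that no monotonicity of $v_\varepsilon$ is needed, only the upper bound on $\varepsilon^2(v_\varepsilon')^2$.

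For part (i), first record that by minimality and the competitor of Theorem \ref{theorem 1d limsup alpha=a}, $G_\varepsilon(v_\varepsilon)\le\varepsilon C_W\omega(0)+C\varepsilon^2|\log\varepsilon|$. Fix a cutoff $\delta_\varepsilon$ with $\tau_0\varepsilon^{1/2}\ll\delta_\varepsilon\ll(\varepsilon|\log\varepsilon|)^{1/2}$ (for instance $\delta_\varepsilon:=\varepsilon^{1/2}|\log\varepsilon|^{1/3}$). By \eqref{1d v' near a}--\eqref{1d v' near b}, $v_\varepsilon'$ does not vanish on $\{t:\,a+\delta_\varepsilon\le v_\varepsilon(t)\le b-\delta_\varepsilon\}$ for $\varepsilon$ small, so every connected component of this set is a monotone traversal of $[a+\delta_\varepsilon,b-\delta_\varepsilon]$; by $W+\varepsilon^2(v')^2\ge2\varepsilon W^{1/2}|v'|$ and $\omega\ge\omega(0)$, each such traversal carries energy at least $2\varepsilon\omega(0)\int_{a+\delta_\varepsilon}^{b-\delta_\varepsilon}W^{1/2}\,ds=\varepsilon\omega(0)C_W-O(\varepsilon\delta_\varepsilon^2)$, and since the traversals occupy disjoint time intervals, the energy bound forces exactly one of them, say on $[t_1,t_2]$, with $v_\varepsilon<a+\delta_\varepsilon$ on $[0,t_1)$, $v_\varepsilon$ increasing on $[t_1,t_2]$, and $v_\varepsilon>b-\delta_\varepsilon$ on $(t_2,T]$. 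As $\beta_\varepsilon-\varepsilon^k>b-\delta_\varepsilon$ for $\varepsilon$ small, $T_\varepsilon\in(t_2,T]$.

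Now write $T_\varepsilon=t_1+(t_2-t_1)+(T_\varepsilon-t_2)$. (a)~Since $v_\varepsilon$ increases on $[t_1,t_2]$, the change of variables $s=v_\varepsilon(t)$ with the lower bounds $\varepsilon|v_\varepsilon'|\ge\tfrac{\sigma}{\sqrt2}(v_\varepsilon-a)$ (for $v_\varepsilon\le\beta_-$) and $\varepsilon|v_\varepsilon'|\ge\tfrac{\sigma}{\sqrt2}(b-v_\varepsilon)$ (for $v_\varepsilon\ge\alpha_-$) from \eqref{1d v' near a}--\eqref{1d v' near b} gives $t_2-t_1\le C\varepsilon\log(1/\delta_\varepsilon)=O(\varepsilon|\log\varepsilon|)$. (b)~On $(t_2,T_\varepsilon)$ we have $\beta_-\le b-\delta_\varepsilon<v_\varepsilon<\beta_\varepsilon-\varepsilon^k$, so $(t_2,T_\varepsilon)$ is a subinterval of $B_\varepsilon$ from \eqref{1d set B epsilon}, whence Theorem \ref{theorem barrier 1d} gives $T_\varepsilon-t_2\le C\varepsilon|\log\varepsilon|$. (c)~For $t_1$, retain the contributions of $[0,t_1]$, $[t_1,t_2]$, $[t_2,T]$, use $\omega\ge\omega(t_1)$ on $[t_1,T]$ and $\omega\ge\omega(0)$ on $[0,t_1]$, together with $W+\varepsilon^2(v')^2\ge2\varepsilon W^{1/2}|v'|$, $\int|\cdot|\ge|\int\cdot|$, $C_W=2\int_a^bW^{1/2}$ and $\int_a^{\alpha_\varepsilon}W^{1/2}+\int_{\beta_\varepsilon}^bW^{1/2}=O(\varepsilon^{2\gamma})$, to obtain
\[
G_\varepsilon(v_\varepsilon)\ \ge\ \varepsilon\,\omega(t_1)\,C_W-\varepsilon\Big(2\!\int_a^{a+\delta_\varepsilon}\!W^{1/2}\Big)\big(\omega(t_1)-\omega(0)\big)-O(\varepsilon^{1+2\gamma}).
\]
Since $2\int_a^{a+\delta_\varepsilon}W^{1/2}\le\sigma^{-1}\delta_\varepsilon^2$ by \eqref{W near a}, the middle term is $o(\varepsilon^2|\log\varepsilon|)$, so comparison with the energy upper bound yields $\omega(t_1)-\omega(0)\le C\varepsilon|\log\varepsilon|$; as $\omega$ is increasing with $\omega'(0)>0$ (hence $\omega(t)-\omega(0)\ge c_*t$ for $t$ near $0$), this forces $t_1\le C\varepsilon|\log\varepsilon|$ for $\varepsilon$ small. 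Adding (a), (b), (c) proves \eqref{1d T epsilon}.

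The main obstacle is step (c): a priori $v_\varepsilon$ can leave a neighbourhood of $a$ arbitrarily slowly — its slope at the initial point is not controlled pointwise — and the differential inequalities alone cannot bound the time spent there. Only the global comparison with the near‑optimal boundary layer (the energy upper bound together with the one‑traversal property) rules this out, and to reach the sharp rate $O(\varepsilon|\log\varepsilon|)$ the cutoff $\delta_\varepsilon$ must be tuned to sit above $\tau_0\varepsilon^{1/2}$ — so that \eqref{1d v' near a}--\eqref{1d v' near b} apply on $[a+\delta_\varepsilon,b-\delta_\varepsilon]$ — while keeping $\delta_\varepsilon^2=o(\varepsilon|\log\varepsilon|)$, so that the slack $\int_a^{a+\delta_\varepsilon}W^{1/2}$ in the energy estimate is negligible. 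Part (ii) avoids all of this, since there only an upper bound on $\varepsilon^2(v_\varepsilon')^2$ is used.
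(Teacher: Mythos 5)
Your proposal is correct, and while part (ii) is essentially the paper's own argument in streamlined form, part (i) follows a genuinely different route. For (ii), the paper also obtains $\varepsilon^{2}(v_{\varepsilon}')^{2}\leq W(v_{\varepsilon})+C\varepsilon$ from the Euler--Lagrange equation, \eqref{ineq:initial_value_veps} and $\omega'\geq0$, but it then restricts to $\{v_{\varepsilon}\geq a+c_{\eta}\varepsilon^{1/2}\}$ (introducing the auxiliary time $R_{\varepsilon,\eta}$) so as to absorb the $C\varepsilon$ into the quadratic term and integrate $\varepsilon v_{\varepsilon}'/(v_{\varepsilon}-a)\leq L_{\eta}$; you instead keep $C\varepsilon$ inside the square root and compare with $\int_{\alpha_{\varepsilon}}^{a+\delta_{\eta}}(c_{0}(s-a)^{2}+C\varepsilon)^{-1/2}ds$, evaluated as in Step 1 of Proposition \ref{proposition asymptotic behavior}, which avoids $R_{\varepsilon,\eta}$ and needs no monotonicity. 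For (i), the paper works with the four hitting times of $\alpha_{\varepsilon}+\varepsilon^{k}$, $\alpha_{-}$, $\beta_{-}$, $\beta_{\varepsilon}-\varepsilon^{k}$: it deduces $v_{\varepsilon}'>0$ between the first two from the sign of $W'$ on $(a,c)$ via the Euler--Lagrange equation, bounds two increments by Theorem \ref{theorem barrier 1d} and the middle one by $\min_{[\alpha_{-},\beta_{-}]}W>0$, rules out a return below $\beta_{-}$ by an energy contradiction, and bounds the initial waiting time by a contradiction argument with a suitably large constant $C_{0}$. You instead run a one-transition-layer count: each monotone crossing of the band $[a+\delta_{\varepsilon},b-\delta_{\varepsilon}]$ (monotone because $v_{\varepsilon}'\neq0$ there, by \eqref{1d v' near a}--\eqref{1d v' near b} once $\delta_{\varepsilon}\geq\tau_{0}\varepsilon^{1/2}$) costs at least $\varepsilon\omega(0)(C_{W}-C\delta_{\varepsilon}^{2})$, so the upper bound \eqref{2d limsup bound} leaves room for exactly one crossing; the layer time is then bounded by the pointwise lower bounds on $|v_{\varepsilon}'|$ together with Theorem \ref{theorem barrier 1d} above $b-\delta_{\varepsilon}$, and the waiting time by the direct comparison $G_{\varepsilon}(v_{\varepsilon})\geq\varepsilon C_{W}\omega(t_{1})-o(\varepsilon^{2}|\log\varepsilon|)$, which with $\omega$ increasing and $\omega'(0)>0$ yields $t_{1}\leq C\varepsilon|\log\varepsilon|$ without any contradiction bookkeeping. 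The external inputs are the same (Theorems \ref{theorem 1d limsup alpha=a}, \ref{theorem monotonicity}, \ref{theorem barrier 1d}); what your version buys is a cleaner global structure --- the one-layer dichotomy automatically gives that $v_{\varepsilon}$ stays above $b-\delta_{\varepsilon}\geq\beta_{-}$ after the layer, replacing the paper's Step 3 contradiction --- at the price of the explicit tuning $\tau_{0}\varepsilon^{1/2}\ll\delta_{\varepsilon}$ with $\delta_{\varepsilon}^{2}=o(\varepsilon|\log\varepsilon|)$, which you correctly identify as the delicate point.
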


\begin{proof}
In this proof, $\varepsilon_{0}$ and the constants $C$, $C_{0}$, and $C_{1}$
depend only on $\alpha_{-}$, $\beta_{-}$, $A_{0}$, $B_{0}$, $T$, $\omega$,
$W$. In what follows, we will take $\varepsilon_{0}$ smaller and $C$, $C_{0}$,
and $C_{1}$ larger, if necessary, preserving the same dependence on the parameters.

By Theorem \ref{theorem 1d limsup alpha=a},%
\begin{equation}
G_{\varepsilon}^{(1)}(v_{\varepsilon})\leq C_{W}\omega(0)+C_{1}\varepsilon
|\log\varepsilon| \label{2d limsup bound}%
\end{equation}
for all $0<\varepsilon<\varepsilon_{0}$.

Let
\[
t_{1}^{\varepsilon}<t_{2}^{\varepsilon}<t_{3}^{\varepsilon}<t_{4}%
^{\varepsilon}%
\]
be the first time such that $v_{\varepsilon}$ equals $\alpha_{\varepsilon
}+\varepsilon^{k}$, $\alpha_{-}$, $\beta_{-}$, and $\beta_{\varepsilon
}-\varepsilon^{k}$, respectively.

\textbf{Step 1: }We claim that there exist $0<\varepsilon_{0}<1$ and $C>0$
such that%
\begin{equation}
t_{2}^{\varepsilon}-t_{1}^{\varepsilon}\leq C\varepsilon|\log\varepsilon|
\label{2d t2-t1}%
\end{equation}
for all $0<\varepsilon<\varepsilon_{0}$. To see this, observe that since
$v_{\varepsilon}(0)=\alpha_{\varepsilon}<\alpha_{\varepsilon}+\varepsilon^{k}%
$, we have that $v_{\varepsilon}^{\prime}(t_{1}^{\varepsilon})\geq0$. Using
(\ref{W' three zeroes}) and (\ref{1d euler lagrange}),%
\[
2\varepsilon^{2}(v_{\varepsilon}^{\prime}(t)\omega(t))^{\prime}=W^{\prime
}(v_{\varepsilon}(t))\omega(t)>0
\]
for all $a<v_{\varepsilon}(t)<c$. In particular, since $\alpha_{-}<c$, we have
that $v_{\varepsilon}^{\prime}(t)>0$ for all $t_{1}^{\varepsilon}<t\leq
t_{2}^{\varepsilon}$. It follows that $[t_{1}^{\varepsilon},t_{2}%
^{\varepsilon}]$ is a maximal interval of the set $A_{\varepsilon}$ defined in
(\ref{1d set A epsilon}), and so by Theorem \ref{theorem barrier 1d}, the
claim (\ref{2d t2-t1}) follows.

\textbf{Step 2: }We claim that there exist $0<\varepsilon_{0}<1$ and $C>0$
such that%
\begin{equation}
t_{3}^{\varepsilon}-t_{2}^{\varepsilon}\leq C\varepsilon\label{2d t3-t2}%
\end{equation}
for all $0<\varepsilon<\varepsilon_{0}$. Indeed, since $v_{\varepsilon
}^{\prime}(t_{2}^{\varepsilon})>0$, by (\ref{1d v' near a}) and
(\ref{1d v' near b}), we have that $v_{\varepsilon}^{\prime}(t)>0$ for all
$t\geq t_{2}^{\varepsilon}$ such that $v_{\varepsilon}(t)\leq b-\tau
_{0}\varepsilon^{1/2}$. It follows that $\alpha_{-}\leq v_{\varepsilon}%
(t)\leq\beta_{-}$ for all $t\in\lbrack t_{2}^{\varepsilon},t_{3}^{3}]$. Since
$\omega$ is increasing, by (\ref{2d limsup bound}),%
\[
C\geq G_{\varepsilon}(v_{\varepsilon})\geq\frac{\omega(0)}{\varepsilon}%
\int_{t_{2}^{\varepsilon}}^{t_{3}^{\varepsilon}}W(v_{\varepsilon})\,dt\geq
\min_{\lbrack\alpha_{-},\beta_{-}]}W\frac{\omega(0)}{\varepsilon}%
(t_{3}^{\varepsilon}-t_{2}^{\varepsilon}),
\]
which proves (\ref{2d t3-t2}).

\textbf{Step 3: }We claim that there exist $0<\varepsilon_{0}<1$ and $C>0$
such that%
\begin{equation}
t_{4}^{\varepsilon}-t_{3}^{\varepsilon}\leq C\varepsilon|\log\varepsilon|
\label{2d t4-t3}%
\end{equation}
for all $0<\varepsilon<\varepsilon_{0}$. Since $v_{\varepsilon}^{\prime}(t)>0$
for all $t\geq t_{3}^{\varepsilon}$ such that $v_{\varepsilon}(t)\leq
b-\tau_{0}\varepsilon^{1/2}$, there are two possible scenarios. Either
$v_{\varepsilon}(t)\geq\beta_{-}$ for all $t\in\lbrack t_{3}^{\varepsilon
},t_{4}^{\varepsilon}]$, in which case (\ref{2d t4-t3}) follows from Theorem
\ref{theorem barrier 1d}, or there exists a last time $t_{3}^{\varepsilon
}<t_{\varepsilon}<t_{4}^{\varepsilon}$ such that $v_{\varepsilon}=\beta_{-}%
$and $\beta_{-}\leq v_{\varepsilon}(t)\leq t_{4}^{\varepsilon}$. We claim that
this latter case cannot happen.

Since $v_{\varepsilon}^{\prime}(t)>0$ for all $t\geq t_{3}^{\varepsilon}$ such
that $v_{\varepsilon}(t)\leq b-\tau_{0}\varepsilon^{1/2}$, there exists
$\tau_{\varepsilon}\in(t_{3}^{\varepsilon},t_{\varepsilon})$ such that
$v_{\varepsilon}(\tau_{\varepsilon})=b-\tau_{0}\varepsilon^{1/2}$. It follows
that $v_{\varepsilon}([t_{3}^{\varepsilon},\tau_{\varepsilon}])=[\beta
_{-},b-\tau_{0}\varepsilon^{1/2}]$, while $v_{\varepsilon}([t_{\varepsilon
},t_{4}^{\varepsilon}])=[\beta_{-},b-\tau_{0}\varepsilon^{1/2}]$. Then by
(\ref{2d limsup bound}), and (\ref{eta close to eta0}), and the fact that
$\omega$ is increasing%
\begin{align*}
C_{W}\omega(0)+C_{1}\varepsilon|\log\varepsilon|  &  \geq G_{\varepsilon
}^{(1)}(v_{\varepsilon})\geq G_{\varepsilon}^{(1)}(v;[0,\tau_{\varepsilon
}]\cup\lbrack t_{\varepsilon},t_{4}^{\varepsilon}])\\
&  \geq\omega(0)\int_{[0,\tau_{\varepsilon}]\cup\lbrack t_{\varepsilon}%
,t_{4}^{\varepsilon}]}2W^{1/2}(v_{\varepsilon})|v_{\varepsilon}^{\prime
}|\,dt\\
&  =\left(  \operatorname*{d}\nolimits_{W}(\alpha_{\varepsilon},b-\tau
_{0}\varepsilon^{1/2})+\operatorname*{d}\nolimits_{W}(\beta_{-},\beta
_{\varepsilon}-\varepsilon^{k})\right)  \omega(0).
\end{align*}
Using the fact that $\operatorname*{d}\nolimits_{W}(\cdot,r)$ and
$\operatorname*{d}\nolimits_{W}(s,\cdot)$ are Lipschitz continuous and
(\ref{alpha epsilon and beta epsilon}), it follows that%
\[
C_{W}\omega(0)+C_{1}\varepsilon|\log\varepsilon|\geq(C_{W}+\operatorname*{d}%
\nolimits_{W}(\beta_{-},b)-L(A_{0}\varepsilon^{\gamma}+2\tau_{0}%
\varepsilon^{1/2}))\omega(0),
\]
or, equivalently,
\[
C(\varepsilon|\log\varepsilon|+\varepsilon^{\gamma}+\varepsilon^{1/2}%
)\geq\operatorname*{d}\nolimits_{W}(\beta_{-},b)\omega(0),
\]
which is a contradiction provided we take $0<\varepsilon<\varepsilon_{0}$ with
$\varepsilon_{0}$ sufficiently small.

\textbf{Step 4: }We claim that there exist $0<\varepsilon_{0}<1$ and $C_{0}>0$
such that%
\begin{equation}
t_{1}^{\varepsilon}\leq C_{0}\varepsilon|\log\varepsilon|. \label{2d t1}%
\end{equation}
Fix $C_{0}>0$ such that
\begin{equation}
\frac{1}{2}\omega^{\prime}(0)C_{0}C_{W}>2C_{1}, \label{2d C0}%
\end{equation}
where $C_{1}$ is the constant in (\ref{2d limsup bound}) and let
$0<\varepsilon<\varepsilon_{0}$, where $\varepsilon_{0}$ was .... Assume by
contradiction that
\[
t_{1}^{\varepsilon}>C_{0}\varepsilon|\log\varepsilon|=:t_{0}^{\varepsilon}.
\]
Since $\omega$ is increasing, we have
\begin{align*}
G_{\varepsilon}(v_{\varepsilon})  &  \geq\int_{t_{1}^{\varepsilon}}^{T}\left(
\frac{1}{\varepsilon}W(v_{\varepsilon})+\varepsilon(v_{\varepsilon}^{\prime
})^{2}\right)  \omega\,dt\geq\omega(t_{0}^{\varepsilon})\int_{t_{1}%
^{\varepsilon}}^{T}\left(  \frac{1}{\varepsilon}W(v_{\varepsilon}%
)+\varepsilon(v_{\varepsilon}^{\prime})^{2}\right)  \,dt\\
&  \geq\omega(t_{0}^{\varepsilon})\int_{\alpha_{\varepsilon}+\varepsilon^{k}%
}^{\beta_{\varepsilon}}2W^{1/2}(s)\,ds\geq\omega(t_{0}^{\varepsilon}%
)(C_{W}-C\varepsilon^{2\gamma}).
\end{align*}
By Taylor's formula, for $0<t<t_{0}$ for some $t_{0}$ small.
\[
\omega(t_{0}^{\varepsilon})=\omega(0)+\omega^{\prime}(0)t_{0}^{\varepsilon
}+o(t_{0}^{\varepsilon})\geq\omega(0)+\frac{1}{2}\omega^{\prime}%
(0)t_{0}^{\varepsilon}%
\]
for all $0<\varepsilon<\varepsilon_{0}$ provided $\varepsilon_{0}$ is taken
even smaller (depending on $C_{0}$). Then by (\ref{2d limsup bound}),
\[
C_{W}\omega(0)+C_{1}\varepsilon|\log\varepsilon|\geq G_{\varepsilon}%
^{(1)}(v_{\varepsilon})\geq\left(  \omega(0)+\frac{1}{2}\omega^{\prime
}(0)C_{0}\varepsilon|\log\varepsilon|\right)  (C_{W}-C\varepsilon^{2\gamma}),
\]
and so
\[
\frac{1}{2}\omega^{\prime}(0)C_{0}C_{W}\varepsilon|\log\varepsilon|\leq
C_{1}\varepsilon|\log\varepsilon|+\left(  \omega(0)+\frac{1}{2}\omega^{\prime
}(0)C_{0}\varepsilon|\log\varepsilon|\right)  C\varepsilon^{2\gamma}\leq
2C_{1}\varepsilon|\log\varepsilon|
\]
provided $\varepsilon_{0}$ is taken even smaller (depending on $C_{0}$). This
contradicts (\ref{2d C0}).

Combining Steps 1-4 proves (\ref{1d T epsilon}).

\textbf{Step 5: }In this step, we prove item (ii). Rewrite
(\ref{1d euler lagrange}) as
\begin{equation}
2\varepsilon^{2}v_{\varepsilon}^{\prime\prime}(t)-W^{\prime}(v_{\varepsilon
}(t))+2\varepsilon^{2}\frac{\omega^{\prime}(t)}{\omega(t)}v_{\varepsilon
}^{\prime}(t)=0. \label{1d euler lagrange expanded}%
\end{equation}
Multiply (\ref{1d euler lagrange expanded}) by $\frac{1}{\varepsilon
}v_{\varepsilon}^{\prime}(t)$ to get%
\begin{equation}
\varepsilon((v_{\varepsilon}^{\prime}(t))^{2})^{\prime}-\frac{1}{\varepsilon
}(W(v_{\varepsilon}(t)))^{\prime}+2\varepsilon\frac{\omega^{\prime}(t)}%
{\omega(t)}(v_{\varepsilon}^{\prime}(t))^{2}=0. \label{1d 71}%
\end{equation}
Integrating between $0$ and $t$ and we have%
\begin{equation}
\varepsilon(v_{\varepsilon}^{\prime}(t))^{2}-\frac{1}{\varepsilon
}W(v_{\varepsilon}(t))+2\varepsilon\int_{0}^{t}\frac{\omega^{\prime}}{\omega
}(v_{\varepsilon}^{\prime})^{2}dt=\varepsilon(v_{\varepsilon}^{\prime}%
(0))^{2}-\frac{1}{\varepsilon}W(\alpha_{\varepsilon}). \label{2d 71}%
\end{equation}
By \eqref{ineq:initial_value_veps}  and the fact that $\omega^{\prime}\geq0$, we have%
\[
\varepsilon(v_{\varepsilon}^{\prime}(t))^{2}-\frac{1}{\varepsilon
}W(v_{\varepsilon}(t))\leq C.
\]
Hence,%
\begin{equation}
\varepsilon^{2}(v_{\varepsilon}^{\prime}(t))^{2}\leq W(v_{\varepsilon
}(t))+C\varepsilon\label{2d 71a}%
\end{equation}
for all $t\in I$. Let $\delta_{\eta}$ be as in (\ref{2d taylor}) and let
$S_{\varepsilon,\eta}$ be the first time such that $v_{\varepsilon}%
=a+\delta_{\eta}$. Then, by (\ref{2d taylor}),%
\begin{align*}
\varepsilon^{2}(v_{\varepsilon}^{\prime}(t))^{2}  &  \leq\frac{1}{2}%
W^{\prime\prime}(a)(1+\eta)(v_{\varepsilon}(t)-a)^{2}+C\varepsilon\\
&  \leq\frac{1}{2}W^{\prime\prime}(a)(1+2\eta)(v_{\varepsilon}(t)-a)^{2}%
\end{align*}
provided $a+c_{\eta}\varepsilon^{1/2}\leq v_{\varepsilon}(t)\leq
a+\delta_{\eta}$ and $t\leq$\thinspace$S_{\varepsilon,\eta}$, where%
\[
c_{\eta}:=\left(  \frac{2C}{W^{\prime\prime}(a)\eta}\right)^{1/2}.
\]
In turn,
\[
\frac{\varepsilon v_{\varepsilon}^{\prime}(t)}{v_{\varepsilon}(t)-a}%
\leq\left(  \frac{1}{2}W^{\prime\prime}(a)(1+2\eta)\right)  ^{1/2}:=L_{\eta}.
\]
Let $R_{\varepsilon,\eta}$ be the first time such that $v_{\varepsilon
}=a+c_{\eta}\varepsilon^{1/2}$. Note that by (\ref{1d v' near a}),
$v_{\varepsilon}^{\prime}(t)\neq0$ whenever $a+\tau_{0}\varepsilon^{1/2}\leq
v_{\varepsilon}(t)\leq\beta_{-}$. By taking $\eta$ smaller, if necessary, we
can assume that $c_{\eta}\geq\tau_{0}$. Hence, $v_{\varepsilon}(t)\in\lbrack
a+c_{\eta}\varepsilon^{1/2},a+\delta_{\eta}]$ for all $t\in\lbrack
R_{\varepsilon,\eta},S_{\varepsilon,\eta}]$. Integrating in $[R_{\varepsilon
,\eta},S_{\varepsilon,\eta}]$ and using the change of variables $\rho
=v_{\varepsilon}(t)$ gives%
\[
\varepsilon\log\delta_{\eta}-\varepsilon\log(c_{\eta}\varepsilon^{1/2}%
)=\int_{R_{\varepsilon,\eta}}^{S_{\varepsilon,\eta}}\frac{\varepsilon
v_{\varepsilon}^{\prime}(t)}{v_{\varepsilon}(t)-a}\,dt\leq L_{\eta
}(S_{\varepsilon,\eta}-R_{\varepsilon,\eta}).
\] Therefore,
\[
\frac{1}{2}\varepsilon|\log\varepsilon|+\varepsilon\log\delta_{\eta
}-\varepsilon\log c_{\eta}\leq L_{\eta}(S_{\varepsilon,\eta}-R_{\varepsilon
,\eta}).
\]
\hfill
\end{proof}

\begin{corollary}
\label{corollary bound from below v prime}Assume that $W$ satisfies
\eqref{W_Smooth}-\eqref{W' three zeroes}, that $\alpha_{-},\beta_{-}$ satisfy \eqref{alpha and beta minus}, and that $\omega$ satisfies \eqref{etaSmooth} and is strictly increasing with $\omega^{\prime
}(0)>0$. Let $a\leq\alpha_{\varepsilon},\,\beta_{\varepsilon}\leq b$ satisfy
\eqref{alpha epsilon and beta epsilon} and let $v_{\varepsilon}$ be the
minimizer of $G_{\varepsilon}$ obtained in Theorem \ref{theorem 1d EL}. There
exist $0<\varepsilon_{0}<1$, $C>0$ depending only on $\alpha_{-}$, $\beta_{-}%
$, $A_{0}$, $B_{0}$, $T$, $\omega$, $W$, such that%
\[
|v_{\varepsilon}^{\prime}(t)|\geq\frac{C}{\varepsilon^{1/2}}%
\]
for all $0<\varepsilon<\varepsilon_{0}$ and for all $t$ such that
$\alpha_{\varepsilon}\leq v_{\varepsilon}(t)\leq a+\tau_{0}\varepsilon^{1/2}$,
where $\tau_{0}$ is the constant given in Theorem \ref{theorem monotonicity}.
\end{corollary}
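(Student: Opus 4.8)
The plan is to use the first integral of the Euler--Lagrange equation. With $E_{\varepsilon}(t):=\varepsilon^{2}(v_{\varepsilon}'(t))^{2}-W(v_{\varepsilon}(t))$, identity \eqref{2d 71} (multiplied by $\varepsilon$) reads
\[
E_{\varepsilon}(t)+2\varepsilon^{2}\int_{0}^{t}\frac{\omega'}{\omega}(v_{\varepsilon}')^{2}\,ds=E_{\varepsilon}(0)=\varepsilon^{2}(v_{\varepsilon}'(0))^{2}-W(\alpha_{\varepsilon}),
\]
so $E_{\varepsilon}$ is non-increasing because $\omega'\ge0$. I would reduce the statement to two facts: (a) $E_{\varepsilon}(0)\ge c_{1}\varepsilon$ for some $c_{1}>0$ depending only on the data; and (b) along the range $\alpha_{\varepsilon}\le v_{\varepsilon}(t)\le a+\tau_{0}\varepsilon^{1/2}$ the accumulated term $2\varepsilon^{2}\int_{0}^{t}\frac{\omega'}{\omega}(v_{\varepsilon}')^{2}\,ds$ is $O(\varepsilon^{2}|\log\varepsilon|)$. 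Indeed, (a) and (b) give $\varepsilon^{2}(v_{\varepsilon}'(t))^{2}=W(v_{\varepsilon}(t))+E_{\varepsilon}(t)\ge E_{\varepsilon}(0)-O(\varepsilon^{2}|\log\varepsilon|)\ge\frac{1}{2}c_{1}\varepsilon$ on that range, which is the assertion with $C=(c_{1}/2)^{1/2}$. I expect (a) to be the main obstacle: it is false when $\omega$ is merely constant (then $E_{\varepsilon}\equiv E_{\varepsilon}(0)$ and $v_{\varepsilon}'(0)$ turns out to be exponentially small), so the strict monotonicity of $\omega$ must be exploited in an essential way, via the fact that the transition of $v_{\varepsilon}$ is concentrated near $t=0$ where $\omega'$ is bounded below by $\frac12\omega'(0)>0$.

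For (a), integrate the identity up to $t=T$: $E_{\varepsilon}(0)=E_{\varepsilon}(T)+2\varepsilon^{2}\int_{0}^{T}\frac{\omega'}{\omega}(v_{\varepsilon}')^{2}\,dt$, and bound $E_{\varepsilon}(T)\ge-W(\beta_{\varepsilon})\ge-C\varepsilon^{2\gamma}$ via \eqref{W near b} and \eqref{alpha epsilon and beta epsilon}. For the dissipation integral I would use the structural analysis from the proof of Theorem~\ref{theorem 1d properties minimizers}: if $t_{2}^{\varepsilon}<t_{3}^{\varepsilon}$ are the first times at which $v_{\varepsilon}$ equals $\alpha_{-}$, resp.\ $\beta_{-}$, then $\alpha_{-}\le v_{\varepsilon}\le\beta_{-}$ on $[t_{2}^{\varepsilon},t_{3}^{\varepsilon}]$ and $t_{3}^{\varepsilon}\le C\varepsilon|\log\varepsilon|$ (combine \eqref{2d t1}, \eqref{2d t2-t1}, \eqref{2d t3-t2}); by Corollary~\ref{corollary bounded derivative}, $t_{3}^{\varepsilon}-t_{2}^{\varepsilon}\ge(\beta_{-}-\alpha_{-})\varepsilon/C_{0}$; by Theorem~\ref{theorem monotonicity} and \eqref{1d v' near a}, $\varepsilon^{2}(v_{\varepsilon}')^{2}\ge\frac{1}{2}\sigma^{2}(\alpha_{-}-a)^{2}$ on $[t_{2}^{\varepsilon},t_{3}^{\varepsilon}]$ once $a+\tau_{0}\varepsilon^{1/2}<\alpha_{-}$; and, since $\omega'$ is continuous with $\omega'(0)>0$, there is a fixed $t_{0}>0$ with $\omega'\ge\frac{1}{2}\omega'(0)$ on $[0,t_{0}]$, and $[t_{2}^{\varepsilon},t_{3}^{\varepsilon}]\subset[0,t_{0}]$ for $\varepsilon$ small. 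Restricting the integral to $[t_{2}^{\varepsilon},t_{3}^{\varepsilon}]$ and combining these estimates yields $2\varepsilon^{2}\int_{0}^{T}\frac{\omega'}{\omega}(v_{\varepsilon}')^{2}\,dt\ge c_{2}\varepsilon$ for some $c_{2}>0$, hence $E_{\varepsilon}(0)\ge c_{2}\varepsilon-C\varepsilon^{2\gamma}\ge c_{1}\varepsilon$ for $\varepsilon$ small, since $\gamma>1$.

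For (b), let $\bar S_{\varepsilon}$ be the first time with $v_{\varepsilon}=a+\tau_{0}\varepsilon^{1/2}$; since $\alpha_{\varepsilon}<a+\tau_{0}\varepsilon^{1/2}<\alpha_{-}$ for $\varepsilon$ small, the level $a+\tau_{0}\varepsilon^{1/2}$ is reached before $\alpha_{-}$, so $0<\bar S_{\varepsilon}\le t_{2}^{\varepsilon}\le C\varepsilon|\log\varepsilon|$ (and $\bar S_{\varepsilon}<T$, as $v_{\varepsilon}(T)=\beta_{\varepsilon}>a+\tau_{0}\varepsilon^{1/2}$ and $v_{\varepsilon}$ is non-constant, so $a<v_{\varepsilon}<b$ on $(0,T)$ by Theorem~\ref{theorem 1d EL}). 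On $[0,\bar S_{\varepsilon}]$ one has $0<v_{\varepsilon}(t)-a\le\tau_{0}\varepsilon^{1/2}$, so $W(v_{\varepsilon}(t))\le\sigma^{-2}\tau_{0}^{2}\varepsilon$ by \eqref{W near a}, and then \eqref{2d 71a} forces $(v_{\varepsilon}'(t))^{2}\le C/\varepsilon$ there; hence for all $t\le\bar S_{\varepsilon}$, $2\varepsilon^{2}\int_{0}^{t}\frac{\omega'}{\omega}(v_{\varepsilon}')^{2}\,ds\le2\varepsilon^{2}(\max_{[0,T]}\frac{\omega'}{\omega})\frac{C}{\varepsilon}\bar S_{\varepsilon}\le C\varepsilon^{2}|\log\varepsilon|$, which is (b). It remains to identify the set in the statement: since $v_{\varepsilon}'$ does not vanish on $[0,\bar S_{\varepsilon}]$ (by what was just proved) and $v_{\varepsilon}(\bar S_{\varepsilon})>v_{\varepsilon}(0)=\alpha_{\varepsilon}$, $v_{\varepsilon}$ is strictly increasing on $[0,\bar S_{\varepsilon}]$ with $v_{\varepsilon}\ge\alpha_{\varepsilon}$; and $v_{\varepsilon}(t)>a+\tau_{0}\varepsilon^{1/2}$ for $t>\bar S_{\varepsilon}$, because $v_{\varepsilon}'(\bar S_{\varepsilon})>0$, Theorem~\ref{theorem monotonicity} (via \eqref{1d v' near a}, \eqref{1d v' near b}) prevents $v_{\varepsilon}'$ from vanishing while $a+\tau_{0}\varepsilon^{1/2}\le v_{\varepsilon}\le b-\tau_{0}\varepsilon^{1/2}$, and a return transition down to $a+\tau_{0}\varepsilon^{1/2}$ is excluded by the energy bound $G_{\varepsilon}^{(1)}(v_{\varepsilon})\le C_{W}\omega(0)+C\varepsilon|\log\varepsilon|$ exactly as in Step~3 of the proof of Theorem~\ref{theorem 1d properties minimizers}. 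Thus $\{t:\alpha_{\varepsilon}\le v_{\varepsilon}(t)\le a+\tau_{0}\varepsilon^{1/2}\}=[0,\bar S_{\varepsilon}]$, and the bound obtained from (a)--(b) holds on all of it.
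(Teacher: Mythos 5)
Your argument is correct, and at its core it is the same mechanism as the paper's proof: the first integral of the Euler--Lagrange equation with the weight-induced dissipation term, combined with the a priori information from Theorem \ref{theorem monotonicity}, Corollary \ref{corollary bounded derivative} and the time estimates of Theorem \ref{theorem 1d properties minimizers}, showing that the dissipation accumulated across the transition layer dominates the (negligible) potential terms at the endpoints. The differences are in implementation. The paper anchors the identity at a mean-value point $t_{\varepsilon}\in[T/2,T]$ where the energy density is $O(\varepsilon|\log\varepsilon|)$ (obtained from the limsup bound), and bounds the layer dissipation from below by a Modica--Mortola type estimate $\int_{J}2\varepsilon(v_{\varepsilon}')^{2}\,dr\geq C\int_{a+\tau_{0}\varepsilon^{1/2}}^{c}2W^{1/2}(s)\,ds$ on a maximal interval $J$ where $a+\tau_{0}\varepsilon^{1/2}\leq v_{\varepsilon}\leq c$; you instead anchor at $t=T$, where $W(\beta_{\varepsilon})\leq C\varepsilon^{2\gamma}$ suffices, and bound the dissipation from below on $[t_{2}^{\varepsilon},t_{3}^{\varepsilon}]$ by combining $\varepsilon^{2}(v_{\varepsilon}')^{2}\geq\tfrac12\sigma^{2}(\alpha_{-}-a)^{2}$ with the length estimate $t_{3}^{\varepsilon}-t_{2}^{\varepsilon}\geq(\beta_{-}-\alpha_{-})\varepsilon/C_{0}$; both routes give the same $c\varepsilon$ bound (in your $\varepsilon^{2}$-scaled identity) with the stated dependence of constants. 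You also make explicit the identification of $\{t:\alpha_{\varepsilon}\leq v_{\varepsilon}(t)\leq a+\tau_{0}\varepsilon^{1/2}\}$ with $[0,\bar S_{\varepsilon}]$ via a no-return energy argument in the style of Step 3 of Theorem \ref{theorem 1d properties minimizers}; the paper only proves the bound up to the first hitting time $P_{\varepsilon}$ and leaves this point implicit, so your treatment is, if anything, a bit more complete on that detail. (Minor remark: your use of \eqref{2d t1}, \eqref{2d t2-t1}, \eqref{2d t3-t2} and of \eqref{2d 71a} imports steps proved inside Theorem \ref{theorem 1d properties minimizers}, which is exactly what the paper's own proof does as well, so this is consistent with the logical order.)
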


\begin{proof}
In this proof, $\varepsilon_{0}$ and $C$ depend only on $\alpha_{-}$,
$\beta_{-}$, $A_{0}$, $B_{0}$, $T$, $\omega$, $W$. Since $\omega$ is
increasing%
\[
\int_{0}^{T_{\varepsilon}}\left(  \frac{1}{\varepsilon}W(v_{\varepsilon
})+\varepsilon(v_{\varepsilon}^{\prime})^{2}\right)  \omega\,dt\geq
2\omega(0)\int_{0}^{T_{\varepsilon}}W^{1/2}(v_{\varepsilon})v_{\varepsilon
}^{\prime}\,dt=2\omega(0)\int_{\alpha_{\varepsilon}}^{\beta_{\varepsilon
}-\varepsilon^{k}}W^{1/2}(\rho)\,d\rho
\]
and so%
\begin{align*}
\int_{0}^{T_{\varepsilon}}\left(  \frac{1}{\varepsilon}W(v_{\varepsilon
})+\varepsilon(v_{\varepsilon}^{\prime})^{2}\right)  \omega\,dt-C_{W}%
\omega(0)  &  \geq-2\omega(0)\int_{\beta_{\varepsilon}-\varepsilon^{k}}%
^{\beta_{k}}W^{1/2}(\rho)\,d\rho-2\omega(0)\int_{a}^{\alpha_{\varepsilon}%
}W^{1/2}(\rho)\,d\rho\\
&  \geq-C\varepsilon^{2\gamma}.
\end{align*}
Hence, also by (\ref{2d limsup bound}),%
\begin{align*}
\int_{T_{\varepsilon}}^{T}\left(  \frac{1}{\varepsilon}W(v_{\varepsilon
})+\varepsilon(v_{\varepsilon}^{\prime})^{2}\right)  \omega\,dt  &  =\int%
_{0}^{T}\left(  \frac{1}{\varepsilon}W(v_{\varepsilon})+\varepsilon
(v_{\varepsilon}^{\prime})^{2}\right)  \omega\,dt-\int_{0}^{\,T_{\varepsilon}%
}\left(  \frac{1}{\varepsilon}W(v_{\varepsilon})+\varepsilon(v_{\varepsilon
}^{\prime})^{2}\right)  \omega\,dt\\
&  \leq C_{W}\omega(0)+C_{1}\varepsilon|\log\varepsilon|-C_{W}\omega
(0)+C\varepsilon^{2\gamma}\leq C\varepsilon|\log\varepsilon|
\end{align*}
since $\gamma>1$. Therefore,
\[
\int_{T_{\varepsilon}}^{T}\left(  \frac{1}{\varepsilon}W(v_{\varepsilon
})+\varepsilon(v_{\varepsilon}^{\prime})^{2}\right)  \omega\,dt\leq
C\varepsilon|\log\varepsilon|.
\]
Since
\[
\frac{2}{T}\int_{T/2}^{T}\left(  \frac{1}{\varepsilon}W(v_{\varepsilon
})+\varepsilon(v_{\varepsilon}^{\prime})^{2}\right)  dt\leq\frac{2}%
{T\min\omega}\int_{\,T_{\varepsilon}}^{T}\left(  \frac{1}{\varepsilon
}W(v_{\varepsilon})+\varepsilon(v_{\varepsilon}^{\prime})^{2}\right)
\omega\,dt\leq C\varepsilon|\log\varepsilon|,
\]
by the mean value theorem, there exists $t_{\varepsilon}$ such that
\[
\frac{1}{\varepsilon}W(v_{\varepsilon}(t_{\varepsilon}))+\varepsilon
(v_{\varepsilon}^{\prime}(t_{\varepsilon}))^{2}=\frac{2}{T}\int_{T/2}%
^{T}\left(  \frac{1}{\varepsilon}W(v_{\varepsilon})+\varepsilon(v_{\varepsilon
}^{\prime})^{2}\right)  dt\leq C\varepsilon|\log\varepsilon|.
\]
Integrating (\ref{1d 71}) from $t$ to $t_{\varepsilon}$ we get%
\begin{align}
\varepsilon(v_{\varepsilon}^{\prime}(t))^{2}-\frac{1}{\varepsilon
}W(v_{\varepsilon}(t_{\varepsilon}))  &  =\varepsilon(v_{\varepsilon}^{\prime
}(t_{\varepsilon}))^{2}-\frac{1}{\varepsilon}W(v_{\varepsilon}(t_{\varepsilon
}))+\int_{t}^{t_{\varepsilon}}2\varepsilon(v_{\varepsilon}^{\prime})^{2}%
\frac{\omega^{\prime}}{\omega}\,dr\nonumber\\
&  \geq-C\varepsilon|\log\varepsilon|+\int_{t}^{t_{\varepsilon}}%
2\varepsilon(v_{\varepsilon}^{\prime})^{2}\frac{\omega^{\prime}}{\omega}\,dr.
\label{2d 42}%
\end{align}
Since $\omega^{\prime}(0)>0$ and $\omega^{\prime}$ is continuous, there exists
$\tau_{0}>0$ such that $\omega^{\prime}(t)\geq\frac{1}{2}\omega^{\prime}(0)$
for all $0<t\leq\tau_{0}$. By taking $\varepsilon_{0}$ even smaller, we can
assume that $T_{\varepsilon}<\tau_{0}$ for all $0<\varepsilon<\varepsilon_{0}%
$. It follows that%
\begin{equation}
\int_{t}^{t_{\varepsilon}}2\varepsilon(v_{\varepsilon}^{\prime})^{2}%
\frac{\omega^{\prime}}{\omega}\,dr\geq\varepsilon\frac{\omega^{\prime}%
(0)}{\min\omega}\int_{t}^{T_{\varepsilon}}2\varepsilon(v_{\varepsilon}%
^{\prime})^{2}\,dt. \label{2d 43}%
\end{equation}
Let $P_{\varepsilon}$ be the first time such that $v_{\varepsilon}=a+\tau
_{0}\varepsilon^{1/2}$, where $\tau_{0}$ is the constant in Theorem
\ref{theorem monotonicity}. Then by Theorem \ref{theorem monotonicity}, and
the properties of $W$,
\[
\varepsilon(v_{\varepsilon}^{\prime})^{2}\geq\frac{1}{2}\sigma^{2}%
(v_{\varepsilon}-a)^{2}\geq CW(v_{\varepsilon})
\]
for all $t$ such that $a+\tau_{0}\varepsilon^{1/2}\leq v_{\varepsilon}\leq c$.
Let $J\subseteq\lbrack P_{\varepsilon},T_{\varepsilon}]$ be a maximal interval
such that $a+\tau_{0}\varepsilon^{1/2}\leq v_{\varepsilon}\leq c$. It follows
that%
\begin{align*}
\int_{t}^{T_{\varepsilon}}2\varepsilon(v_{\varepsilon}^{\prime})^{2}%
\omega\,dr  &  \geq\int_{J}[\varepsilon(v_{\varepsilon}^{\prime}%
)^{2}+CW(v_{\varepsilon})]\,dr\geq C\int_{J}2W^{1/2}(v_{\varepsilon
})v_{\varepsilon}^{\prime}\,dr\\
&  =C\int_{a+\tau_{0}\varepsilon^{1/2}}^{c}2W^{1/2}(s)\,ds\geq C\int%
_{\frac{a+c}{2}}^{c}2W^{1/2}(s)\,ds=:C_{1}%
\end{align*}
for all $0\leq t\leq P_{\varepsilon}$. In turn, from (\ref{2d 42}) and
(\ref{2d 43}),%
\[
\varepsilon(v_{\varepsilon}^{\prime}(t))^{2}-\frac{1}{\varepsilon
}W(v_{\varepsilon}(t))\geq-C\varepsilon|\log\varepsilon|+\frac{\omega^{\prime
}(0)}{\min\omega}C_{1}.
\]
Hence,
\[
\varepsilon(v_{\varepsilon}^{\prime}(t))^{2}\geq C_{2}>0
\]
for all $0\leq t\leq P_{\varepsilon}$.\hfill
\end{proof}

\begin{remark}
\label{remark increasing}Note that this corollary, together with Theorem
\ref{theorem monotonicity}, implies that $v_{\varepsilon}^{\prime}$ does not
vanish as long as $v_{\varepsilon}(t)\leq b-\tau_{0}\varepsilon^{1/2}$. Hence
$v_{\varepsilon}^{\prime}(t)>0$ as long as $v_{\varepsilon}(t)\leq b-\tau
_{0}\varepsilon^{1/2}$.
\end{remark}

\subsection{Second-Order $\Gamma$-liminf} 

In this subsection, we present the $\Gamma$-liminf  counterparts of Theorems
\ref{theorem 1d limsup a<alpha} and \ref{theorem 1d limsup alpha=a}. We recall
that when $a<\alpha$, $G_{\varepsilon}^{(2)}$ is defined as in
(\ref{G 2 epsilon a<alpha}). For the proof of the following theorem, we refer
to \cite[Theorem 3.15]{fonseca-kreutz-leoni2025I}.

\begin{theorem}
[Second-Order Liminf, $a<\alpha$]\label{theorem liminf 1d a<alpha}Assume that
$W$ satisfies \eqref{W_Smooth}-\eqref{W' three zeroes}, that that
$\alpha_{-}$ satisfies \eqref{alpha and beta minus}, and that
$\omega$ satisfies \eqref{etaSmooth}, \eqref{eta close to eta0},
and \eqref{eta 0 alpha-}. Let $\alpha_{-}\leq\alpha_{\varepsilon}%
,\,\beta_{\varepsilon}\leq b$ satisfy \eqref{alpha epsilon and beta epsilon}
and let $v_{\varepsilon}$ be the minimizer of $G_{\varepsilon}$ obtained in
Theorem \ref{theorem 1d EL}. Then there exist $0<\varepsilon_{0}<1$, $C>0$,
and $l_{0}>1$, depending only on $\alpha_{-}$, $A_{0}$, $B_{0}$, $T$, $\omega
$, and $W$, such that%
\[
G_{\varepsilon}^{(2)}(v_{\varepsilon})\geq2\omega^{\prime}(0)\int_{0}%
^{l}W^{1/2}(w_{\varepsilon})w_{\varepsilon}^{\prime}s\,ds-Ce^{-l\mu}\left(
l\mu+1\right)  -C\varepsilon^{1/2}l-C\varepsilon^{\gamma_{1}}|\log
\varepsilon|^{2+\gamma_{0}}%
\]
for all $0<\varepsilon<\varepsilon_{0}$ and $l>l_{0}$, where $G_{\varepsilon
}^{(2)}$ is defined in \eqref{G 2 epsilon a<alpha}, $w_{\varepsilon
}(s):=v_{\varepsilon}(\varepsilon s)$ for $s\in\lbrack0,T\varepsilon^{-1}]$
satisfies%
\[
\lim_{\varepsilon\rightarrow0^{+}}\int_{0}^{l}W^{1/2}(w_{\varepsilon
})w_{\varepsilon}^{\prime}s\,ds=\int_{0}^{l}W^{1/2}(z_{\alpha})z_{\alpha
}^{\prime}s\,ds
\]
for every $l>0$, and where $z_{\alpha}$ solves the Cauchy problem
\eqref{cauchy problem z alpha}. In particular,%
\[
\liminf_{\varepsilon\rightarrow0^{+}}G_{\varepsilon}^{(2)}(v_{\varepsilon
})\geq2\omega^{\prime}(0)\int_{0}^{\infty}W^{1/2}(z_{\alpha})z_{\alpha
}^{\prime}s\,ds.
\]

\end{theorem}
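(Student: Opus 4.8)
The approach I would take follows Leoni--Murray and the companion paper \cite{fonseca-kreutz-leoni2025I}: combine the Modica--Mortola decomposition with a Taylor expansion of $\omega$ at $t=0$ and exploit the fine structure of the minimizer $v_{\varepsilon}$ near $t=0$ and near $b$. Writing $\omega(t)=\omega(0)+\omega^{\prime}(0)t+R_{1}(t)$ with $|R_{1}(t)|\le C|t|^{1+d}$ and using the identity $\tfrac1\varepsilon W(v)+\varepsilon(v^{\prime})^{2}=2W^{1/2}(v)|v^{\prime}|+(\varepsilon^{-1/2}W^{1/2}(v)-\varepsilon^{1/2}|v^{\prime}|)^{2}$, I would split $G_{\varepsilon}^{(2)}(v_{\varepsilon})=\mathcal A+\mathcal B+\mathcal C$, with $\mathcal A$ carrying the factor $\omega(0)/\varepsilon$, $\mathcal B$ the factor $\omega^{\prime}(0)/\varepsilon$ against $t$, and $\mathcal C$ the remainder $R_{1}/\varepsilon$. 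Since $v_{\varepsilon}$ is essentially monotone, $\int_{I}(\tfrac1\varepsilon W(v_{\varepsilon})+\varepsilon(v_{\varepsilon}^{\prime})^{2})\,dt\ge 2\int_{\alpha_{\varepsilon}}^{\beta_{\varepsilon}}W^{1/2}\ge\operatorname*{d}\nolimits_{W}(\alpha,b)-C\varepsilon^{\gamma}$, where the endpoint corrections are controlled by $|\alpha_{\varepsilon}-\alpha|+|\beta_{\varepsilon}-b|\le C\varepsilon^{\gamma}$ together with $W^{1/2}(s)\le C(b-s)$ near $b$ (by \eqref{W near b}); hence $\mathcal A\ge -C\varepsilon^{\gamma-1}$.

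The second ingredient is the qualitative description of $v_{\varepsilon}$. From Theorem~\ref{theorem 1d limsup a<alpha} one has the sharp bound $G_{\varepsilon}^{(1)}(v_{\varepsilon})\le\operatorname*{d}\nolimits_{W}(\alpha,b)\,\omega(0)+C\varepsilon$; together with Theorems~\ref{theorem 1d EL}, \ref{theorem monotonicity}, \ref{theorem barrier 1d}, Corollary~\ref{corollary bounded derivative}, and an energy-comparison argument as in \cite{fonseca-kreutz-leoni2025I}, this shows that when $a<\alpha$ the transition is completed in time $t_{1}^{\prime}\le C\varepsilon$ (so $v_{\varepsilon}$ increases from $\alpha_{\varepsilon}$ to some fixed level $b-\delta_{1}$ on $[0,t_{1}^{\prime}]$) and $v_{\varepsilon}\ge b-\tau_{0}\varepsilon^{1/2}$ thereafter. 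On $[t_{1}^{\prime},T]$ a barrier for the Euler--Lagrange equation near $b$ — comparing $b-v_{\varepsilon}$ with $\tau_{0}\varepsilon^{1/2}e^{-\nu(t-t_{1}^{\prime})/\varepsilon}+C\varepsilon^{\gamma}$, $\nu<\tfrac12(W^{\prime\prime}(b))^{1/2}$, by the maximum principle, using \eqref{1d v' near b} on the sub-interval where $v_{\varepsilon}\le b-\tau_{0}\varepsilon^{1/2}$ — yields $b-v_{\varepsilon}(t)\le Ce^{-\mu(t-t_{1}^{\prime})/\varepsilon}+C\varepsilon^{\gamma}$, and an interior gradient estimate for the rescaled equation gives the matching bound $\varepsilon^{2}(v_{\varepsilon}^{\prime}(t))^{2}\le C(e^{-2\mu(t-t_{1}^{\prime})/\varepsilon}+\varepsilon^{2\gamma})$ for $t\ge t_{1}^{\prime}+\varepsilon$. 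Thus $\tfrac1\varepsilon W(v_{\varepsilon})+\varepsilon(v_{\varepsilon}^{\prime})^{2}\le\tfrac C\varepsilon(e^{-2\mu(t-t_{1}^{\prime})/\varepsilon}+\varepsilon^{2\gamma})$ off the $O(\varepsilon)$ transition layer, and integrating against $|R_{1}(t)|\le Ct^{1+d}$ gives $|\mathcal C|\le C\varepsilon^{d}+C\varepsilon^{2\gamma-2}$.

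For $\mathcal B$ I would rescale: $s=t/\varepsilon$, $w_{\varepsilon}(s):=v_{\varepsilon}(\varepsilon s)$, so $\mathcal B=\omega^{\prime}(0)\int_{0}^{T/\varepsilon}(W(w_{\varepsilon})+(w_{\varepsilon}^{\prime})^{2})s\,ds$, and split the integral at $l$. On $[l,T/\varepsilon]$ the exponential bounds above give $|\omega^{\prime}(0)\int_{l}^{T/\varepsilon}(W(w_{\varepsilon})+(w_{\varepsilon}^{\prime})^{2})s\,ds|\le Ce^{-\mu l}(\mu l+1)+C\varepsilon^{2\gamma-2}$ for $l$ larger than an absolute $l_{0}$. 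On $[0,l]$ write $W(w_{\varepsilon})+(w_{\varepsilon}^{\prime})^{2}=2W^{1/2}(w_{\varepsilon})w_{\varepsilon}^{\prime}+(W^{1/2}(w_{\varepsilon})-w_{\varepsilon}^{\prime})^{2}$; the point — relevant only when $\omega^{\prime}(0)<0$ — is to bound the equipartition defect, and here the sharp energy bound is decisive: decomposing $G_{\varepsilon}^{(1)}(v_{\varepsilon})$ by Modica--Mortola and using that the transition concentrates where $\omega\ge\omega(0)-o(1)$ (rather than estimating by $\min\omega$) gives $\int_{0}^{T}(\varepsilon^{-1/2}W^{1/2}(v_{\varepsilon})-\varepsilon^{1/2}v_{\varepsilon}^{\prime})^{2}\,dt\le C\varepsilon$, i.e. $\int_{0}^{T/\varepsilon}(W^{1/2}(w_{\varepsilon})-w_{\varepsilon}^{\prime})^{2}\,ds\le C\varepsilon$, so $\int_{0}^{l}(W^{1/2}(w_{\varepsilon})-w_{\varepsilon}^{\prime})^{2}s\,ds\le l\,C\varepsilon=C\varepsilon l\le C\varepsilon^{1/2}l$. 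Consequently $\mathcal B\ge 2\omega^{\prime}(0)\int_{0}^{l}W^{1/2}(w_{\varepsilon})w_{\varepsilon}^{\prime}s\,ds-C\varepsilon^{1/2}l-Ce^{-\mu l}(\mu l+1)-C\varepsilon^{2\gamma-2}$, and adding the bounds for $\mathcal A$, $\mathcal B$, $\mathcal C$ (absorbing $\varepsilon^{\gamma-1}$, $\varepsilon^{d}$, $\varepsilon^{2\gamma-2}$ into $C\varepsilon^{\gamma_{1}}|\log\varepsilon|^{2+\gamma_{0}}$) produces the stated inequality.

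It remains to identify the limit. Since $|w_{\varepsilon}|\le b$ and $|w_{\varepsilon}^{\prime}|\le C_{0}$ by Corollary~\ref{corollary bounded derivative}, $\{w_{\varepsilon}\}$ is precompact in $C^{1}_{\mathrm{loc}}([0,\infty))$; passing to the limit in the rescaled Euler--Lagrange equation $2w_{\varepsilon}^{\prime\prime}-W^{\prime}(w_{\varepsilon})+2\varepsilon(\omega^{\prime}/\omega)(\varepsilon s)w_{\varepsilon}^{\prime}=0$, together with the vanishing of the equipartition defect, forces any limit to solve $z^{\prime}=W^{1/2}(z)$, $z(0)=\alpha$, hence to equal $z_{\alpha}$ — so no subsequence is needed and $w_{\varepsilon}^{\prime}\ge0$ — and therefore $\int_{0}^{l}W^{1/2}(w_{\varepsilon})w_{\varepsilon}^{\prime}s\,ds\to\int_{0}^{l}W^{1/2}(z_{\alpha})z_{\alpha}^{\prime}s\,ds$ for every $l>0$. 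Taking $\liminf_{\varepsilon\to0^{+}}$ in the inequality for fixed $l$ and then letting $l\to\infty$ (the tail term $Ce^{-\mu l}(\mu l+1)$ vanishes) gives the ``in particular'' assertion. I expect the main obstacle to be the barrier estimate for $b-v_{\varepsilon}$ near $b$ with its matching gradient bound — this is exactly what makes $\mathcal C$ and the tail of $\mathcal B$ negligible with the correct powers of $\varepsilon$ and of $l$ — together with the non-lossy energy bound driving the equipartition-defect estimate; both lean on the analysis of minimizers carried out in \cite{fonseca-kreutz-leoni2025I}.
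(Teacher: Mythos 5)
Note first that this paper does not contain its own proof of this statement: it is imported from the companion paper, and the text explicitly refers to \cite[Theorem 3.15]{fonseca-kreutz-leoni2025I}. The only internal benchmark is therefore the analogous argument given here for Theorem \ref{theorem liminf 1d alpha=a}. Measured against that, your outline has the same skeleton (Taylor expansion $\omega=\omega(0)+\omega'(0)t+R_1$, splitting into $\mathcal A+\mathcal B+\mathcal C$, Modica--Mortola for $\mathcal A$, blow-up $w_\varepsilon(s)=v_\varepsilon(\varepsilon s)$ for $\mathcal B$, exponential decay of $b-v_\varepsilon$ and a matching gradient bound for the tail and for $\mathcal C$), but one genuinely different ingredient: where the paper controls the non-product terms pointwise via the integrated Euler--Lagrange identity, i.e.\ $\varepsilon^{2}(v_\varepsilon')^{2}\le W(v_\varepsilon)+C\varepsilon$ as in \eqref{2d 71a}, you bound the $L^{2}$ equipartition defect $\int_0^T(\varepsilon^{-1/2}W^{1/2}(v_\varepsilon)-\varepsilon^{1/2}v_\varepsilon')^{2}\,dt$ using the sharp energy bound $G^{(1)}_\varepsilon(v_\varepsilon)\le \operatorname{d}_W(\alpha,b)\,\omega(0)+C\varepsilon$. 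That route is viable and arguably cleaner for the $[0,l]$ part of $\mathcal B$; note only that a direct computation gives the defect bound $C\varepsilon|\log\varepsilon|$ (the weight varies by $O(\varepsilon|\log\varepsilon|)$ across a transition layer of that width) unless you also invoke the exponential decay of $W^{1/2}(v_\varepsilon)$, but either version is absorbed by the allotted $-C\varepsilon^{1/2}l$ term, so nothing breaks.

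Two points do need repair. First, when $\omega'(0)<0$ your defect argument naturally produces $2\omega'(0)\int_0^l W^{1/2}(w_\varepsilon)|w_\varepsilon'|\,s\,ds$, and replacing $|w_\varepsilon'|$ by $w_\varepsilon'$ goes the wrong way unless $w_\varepsilon'\ge0$ on the relevant range; you assert this monotonicity only as a byproduct of the convergence $w_\varepsilon\to z_\alpha$, which is circular as written. It should be established beforehand, from Theorem \ref{theorem monotonicity} (which forbids $v_\varepsilon'$ from vanishing while $\alpha_-\le v_\varepsilon\le b-\tau_0\varepsilon^{1/2}$) together with the sharp energy bound (which excludes an excursion below $\alpha_-$), exactly the kind of argument used in Steps 1--4 of Theorem \ref{theorem 1d properties minimizers}. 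Second, the claim that $v_\varepsilon\ge b-\tau_0\varepsilon^{1/2}$ ``thereafter'', i.e.\ immediately after a time $t_1'\le C\varepsilon$, is too strong: climbing from a fixed level $\beta_-$ to within $\tau_0\varepsilon^{1/2}$ of $b$ costs an additional $O(\varepsilon|\log\varepsilon|)$ (cf.\ Theorem \ref{theorem barrier 1d}). This is harmless because the estimate you actually use is the barrier bound $b-v_\varepsilon(t)\le Ce^{-\mu(t-t_1')/\varepsilon}+C\varepsilon^{\gamma}$ with its matching gradient bound, which tolerates the extra $\varepsilon|\log\varepsilon|$ window; but state it that way. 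With these two repairs, and the compactness/identification step for $w_\varepsilon\to z_\alpha$ done as you describe, the outline delivers the stated inequality and the ``in particular'' conclusion.
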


When $\alpha=a$, $G_{\varepsilon}^{(2)}$ is defined as in
(\ref{G 2 epsilon a=alpha}).

\begin{theorem}
[Second-Order Liminf, $a=\alpha$]\label{theorem liminf 1d alpha=a}Assume that
$W$ satisfies \eqref{W_Smooth}-\eqref{W' three zeroes}, that
$\alpha_{-},\beta_{-}$ satisfy \eqref{alpha and beta minus}, and
that $\omega$ satisfies \eqref{etaSmooth} and is strictly
increasing with $\omega^{\prime}(0)>0$. Let $a\leq\alpha_{\varepsilon}%
,\,\beta_{\varepsilon}\leq b$ satisfy \eqref{alpha epsilon and beta epsilon}
and let $v_{\varepsilon}$ be the minimizer of $G_{\varepsilon}$ obtained in
Theorem \ref{theorem 1d EL}. Then for every $0<\eta<\frac{1}{4}$ there exist a
constant $C_{\eta}>0$, depending on $\eta$, $\alpha_{-}$, $\beta_{-}$, $A_{0}%
$, $B_{0}$, $T$, $\omega$, $W$, such that
\[
G_{\varepsilon}^{(2)}(v_{\varepsilon})\geq\frac{C_{W}\omega^{\prime}%
(0)}{2^{1/2}(W^{\prime\prime}(a))^{1/2}}\left(  1-\eta\right)  -\frac{C_{\eta
}}{|\log\varepsilon|}%
\]
for all $0<\varepsilon<\varepsilon_{\eta}$, where $\varepsilon_{\eta}>0$
depends on $\eta$, $\alpha_{-}$, $\beta_{-}$, $A_{0}$, $B_{0}$, $T$, $\omega$,
$W$, and where $G_{\varepsilon}^{(2)}$ is defined in \eqref{G 2 epsilon a=alpha}.
\end{theorem}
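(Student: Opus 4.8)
The plan is to exploit Theorem~\ref{theorem 1d properties minimizers}(ii): the minimizer $v_\varepsilon$ spends a time of order $\varepsilon|\log\varepsilon|$ inside the boundary layer $\{v_\varepsilon\le a+\delta_\eta\}$ near the well $a$, where the weight $\omega$ is still essentially $\omega(0)$. Consequently the bulk of the transition from $a$ to $b$, which carries energy $\approx C_W$, is forced into a region where $\omega$ already exceeds $\omega(0)$ by about $\omega'(0)\,\varepsilon|\log\varepsilon|/(2^{1/2}(W''(a))^{1/2})$, and this excess, multiplied by $C_W$, produces the second-order term. Fix $0<\eta<\tfrac14$. Shrinking $\delta_\eta$ if necessary---which is harmless, since the proof of Theorem~\ref{theorem 1d properties minimizers}(ii) uses only the Taylor bound \eqref{2d taylor} on $[a,a+\delta_\eta]$---we may assume both that \eqref{2d taylor} holds and that $m_\eta:=2\int_a^{a+\delta_\eta}W^{1/2}(\rho)\,d\rho\le\tfrac12\eta C_W$ (possible since $m_\eta\to0$ as $\delta_\eta\to0^+$). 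Let $S_{\varepsilon,\eta}$ be the first time at which $v_\varepsilon=a+\delta_\eta$.

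I would first record the two facts driving the argument. By Theorem~\ref{theorem 1d properties minimizers}(ii),
\[
S_{\varepsilon,\eta}\ \ge\ \frac{1-\eta}{2^{1/2}(W''(a))^{1/2}}\,\varepsilon|\log\varepsilon|-C_\eta\varepsilon;
\]
moreover, since $v_\varepsilon$ is increasing while $v_\varepsilon\le b-\tau_0\varepsilon^{1/2}$ (Remark~\ref{remark increasing}) and $a+\delta_\eta<b-\tau_0\varepsilon^{1/2}<\beta_\varepsilon-\varepsilon^k$ for $\varepsilon$ small, $v_\varepsilon$ reaches $a+\delta_\eta$ before it reaches $\beta_\varepsilon-\varepsilon^k$, so Theorem~\ref{theorem 1d properties minimizers}(i) gives $S_{\varepsilon,\eta}\le T_\varepsilon\le C\varepsilon|\log\varepsilon|$. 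Next I split $[0,T]=[0,S_{\varepsilon,\eta}]\cup[S_{\varepsilon,\eta},T]$. On each subinterval, using $\tfrac1\varepsilon W(v_\varepsilon)+\varepsilon(v_\varepsilon')^2\ge 2W^{1/2}(v_\varepsilon)|v_\varepsilon'|$, the fundamental theorem of calculus applied to $r\mapsto2\int_a^rW^{1/2}$, the bounds $|\alpha_\varepsilon-a|\le A_0\varepsilon^\gamma$, $|\beta_\varepsilon-b|\le B_0\varepsilon^\gamma$, and \eqref{W near b}, \eqref{W near a}, one gets
\[
\int_0^{S_{\varepsilon,\eta}}\!\Big(\tfrac1\varepsilon W(v_\varepsilon)+\varepsilon(v_\varepsilon')^2\Big)\,dt\ \ge\ m_\eta-C\varepsilon^{2\gamma},\qquad
\int_{S_{\varepsilon,\eta}}^{T}\!\Big(\tfrac1\varepsilon W(v_\varepsilon)+\varepsilon(v_\varepsilon')^2\Big)\,dt\ \ge\ C_W-m_\eta-C\varepsilon^{2\gamma}.
\]
Since $\omega$ is increasing, $\omega\ge\omega(0)$ on $[0,S_{\varepsilon,\eta}]$ and $\omega\ge\omega(S_{\varepsilon,\eta})$ on $[S_{\varepsilon,\eta},T]$; writing $\omega(S_{\varepsilon,\eta})=\omega(0)+(\omega(S_{\varepsilon,\eta})-\omega(0))$, the contributions $\omega(0)m_\eta$ and $\omega(0)(C_W-m_\eta)$ recombine into $\omega(0)C_W$, leaving
\[
G_\varepsilon^{(1)}(v_\varepsilon)-C_W\omega(0)\ \ge\ -C\varepsilon^{2\gamma}+\big(\omega(S_{\varepsilon,\eta})-\omega(0)\big)\big(C_W-m_\eta-C\varepsilon^{2\gamma}\big).
\]

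To conclude, I would estimate $\omega(S_{\varepsilon,\eta})-\omega(0)=\int_0^{S_{\varepsilon,\eta}}\omega'(s)\,ds\ge\omega'(0)S_{\varepsilon,\eta}-|\omega'|_{C^{0,d}}S_{\varepsilon,\eta}^{1+d}$; inserting the lower bound for $S_{\varepsilon,\eta}$ (legitimate since $\omega'(0)>0$) and $S_{\varepsilon,\eta}\le C\varepsilon|\log\varepsilon|$ gives
\[
\omega(S_{\varepsilon,\eta})-\omega(0)\ \ge\ \frac{\omega'(0)(1-\eta)}{2^{1/2}(W''(a))^{1/2}}\,\varepsilon|\log\varepsilon|-C_\eta\varepsilon-C(\varepsilon|\log\varepsilon|)^{1+d}.
\]
Dividing the preceding display by $\varepsilon|\log\varepsilon|$ (recall the definition \eqref{G 2 epsilon a=alpha} of $G_\varepsilon^{(2)}$), using $\varepsilon^{2\gamma-1}\le1$, $(\varepsilon|\log\varepsilon|)^{d}\le1/|\log\varepsilon|$ and $C\varepsilon^{2\gamma}\le m_\eta$ for $\varepsilon$ small, and then $m_\eta\le\tfrac12\eta C_W$, one arrives at
\[
G_\varepsilon^{(2)}(v_\varepsilon)\ \ge\ \frac{C_W\omega'(0)}{2^{1/2}(W''(a))^{1/2}}(1-2\eta)-\frac{C_\eta}{|\log\varepsilon|}
\]
for all $\varepsilon<\varepsilon_\eta$; replacing $\eta$ by $\eta/2$ yields the claim. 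I do not expect a serious obstacle: the substantive analysis---the Euler--Lagrange equation, the monotonicity and barrier estimates, and above all the logarithmic lower bound on $S_{\varepsilon,\eta}$ in Theorem~\ref{theorem 1d properties minimizers}(ii)---is already in hand, so the work reduces to bookkeeping of the split and the exact cancellation of the $\omega(0)$-terms, the only delicate point being that $\delta_\eta$, hence $m_\eta$, may be taken arbitrarily small without affecting that logarithmic lower bound. Combined with the matching upper bound of Theorem~\ref{theorem 1d limsup alpha=a}, this pins down $\lim_{\varepsilon\to0^+}G_\varepsilon^{(2)}(v_\varepsilon)=C_W\omega'(0)/(2^{1/2}(W''(a))^{1/2})$.
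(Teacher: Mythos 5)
Your proposal is correct, and it reaches the bound by a genuinely different decomposition than the paper. The paper Taylor-expands the weight, $\omega(t)=\omega(0)+\omega'(0)t+R_1(t)$, splits $G_\varepsilon^{(2)}(v_\varepsilon)$ into four pieces over $[0,T_\varepsilon]$ and $[T_\varepsilon,T]$, and extracts the leading term from the linear-in-$t$ piece by shifting time to $S_{\varepsilon,\eta}$; the price is that the negative contribution from $t<S_{\varepsilon,\eta}$ (its $\mathcal{B}_2$) must be shown to be $O(1/|\log\varepsilon|)$, which requires the backward exponential decay of $v_\varepsilon-a$ obtained from Corollary \ref{corollary bound from below v prime}, Remark \ref{remark increasing}, and \eqref{W near a}, plus the estimate \eqref{2d 71a}. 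You instead split the time interval at $S_{\varepsilon,\eta}$, use only the monotonicity of $\omega$ ($\omega\ge\omega(0)$ before, $\omega\ge\omega(S_{\varepsilon,\eta})$ after), the Modica--Mortola inequality on each piece, and $\omega(S_{\varepsilon,\eta})-\omega(0)\ge\omega'(0)S_{\varepsilon,\eta}-|\omega'|_{C^{0,d}}S_{\varepsilon,\eta}^{1+d}$; the only substantive inputs are Theorem \ref{theorem 1d properties minimizers}(i)--(ii), exactly as in the paper, and the small energy $m_\eta$ lost to $[0,S_{\varepsilon,\eta}]$ is absorbed by shrinking $\delta_\eta$ so that $m_\eta\le\tfrac12\eta C_W$, which is legitimate since \eqref{2d taylor} and hence Theorem \ref{theorem 1d properties minimizers}(ii) survive the shrinking (only $C_\eta$ changes). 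What your route buys is the complete avoidance of the $\mathcal{B}_2$ analysis and of the exponential tail estimates near $a$ in this proof (you cite Remark \ref{remark increasing} only to locate $S_{\varepsilon,\eta}\le T_\varepsilon$, where the intermediate value theorem already suffices); what the paper's route buys is that the coefficient $\omega'(0)$ appears directly without the $m_\eta$ bookkeeping and the $\eta\mapsto\eta/2$ relabeling, and its structure mirrors the limsup construction, which makes the matching of the two bounds transparent. The remaining bookkeeping in your argument (absorbing $\varepsilon^{2\gamma-1}$ and $\varepsilon^{d}|\log\varepsilon|^{d}$ into $C_\eta/|\log\varepsilon|$ for small $\varepsilon$) is the same as in the paper's Steps 1 and 3 and is sound.
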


\begin{proof}
In this proof, the constants $\varepsilon_{0}$, $C$, and $C_{1}$ depend only
on $\alpha_{-}$, $\beta_{-}$, $A_{0}$, $B_{0}$, $T$, $\omega$, $W$, while
$\varepsilon_{\eta}$ and $C_{\eta}$ depend on all these parameters but also on
$\eta$. Since $\omega\in C^{1,d}(I)$, by Taylor's formula, for $t\in
\lbrack0,T]$,%
\[
\omega(t)=\omega(0)+\omega^{\prime}(0)t+R_{1}(t),
\]
where%
\begin{equation}
|R_{1}(t)|=|\omega^{\prime}(\theta t)-\omega^{\prime}(0)|t\leq|\omega^{\prime
}|_{C^{0,d}}t^{1+d}. \label{2d R1}%
\end{equation}
Let $T_{\varepsilon}$ be the first time such that $v_{\varepsilon}%
=\beta_{\varepsilon}-\varepsilon^{k}$, and write%
\begin{align}
G_{\varepsilon}^{(2)}(v_{\varepsilon})  &  =\left[  \int_{0}^{T_{\varepsilon}%
}\left(  \frac{1}{\varepsilon}W(v_{\varepsilon})+\varepsilon(v_{\varepsilon
}^{\prime})^{2}\right)  \,dt-C_{W}\right]  \frac{\omega(0)}{\varepsilon
|\log\varepsilon|}\nonumber\\
&  \quad+\int_{0}^{T_{\varepsilon}}\left(  \frac{1}{\varepsilon}%
W(v_{\varepsilon})+\varepsilon(v_{\varepsilon}^{\prime})^{2}\right)
t\,dt\frac{\omega^{\prime}(0)}{\varepsilon|\log\varepsilon|}%
\label{2d liminf energy}\\
&  \quad+\int_{0}^{T_{\varepsilon}}\left(  \frac{1}{\varepsilon}%
W(v_{\varepsilon})+\varepsilon(v_{\varepsilon}^{\prime})^{2}\right)
R_{1}\,dt\frac{1}{\varepsilon|\log\varepsilon|}\nonumber\\
&  \quad+\int_{T_{\varepsilon}}^{T}\left(  \frac{1}{\varepsilon}%
W(v_{\varepsilon})+\varepsilon(v_{\varepsilon}^{\prime})^{2}\right)
\omega\,dt\frac{1}{\varepsilon|\log\varepsilon|}=:\mathcal{A}+\mathcal{B}%
+\mathcal{C}+\mathcal{D}.\nonumber
\end{align}
\textbf{Step 1: }We estimate $\mathcal{A}$. By the change of variables $\rho
=$\thinspace$v_{\varepsilon}(t)$, (\ref{W near a}), (\ref{W near b}), and
(\ref{alpha epsilon and beta epsilon}), we have
\begin{align}
\int_{0}^{T_{\varepsilon}}\left(  \frac{1}{\varepsilon}W(v_{\varepsilon
})+\varepsilon(v_{\varepsilon}^{\prime})^{2}\right)  \,dt  &  \geq2\int%
_{0}^{T_{\varepsilon}}2W^{1/2}(v_{\varepsilon})v_{\varepsilon}^{\prime
}\,dt=2\int_{\alpha_{\varepsilon}}^{\beta_{\varepsilon}-\varepsilon^{k}%
}W^{1/2}(\rho)\,d\rho\nonumber\\
&  =C_{W}-2\int_{a}^{\alpha_{\varepsilon}}W^{1/2}(\rho)\,d\rho-2\int%
_{\beta_{\varepsilon}-\varepsilon^{k}}^{b}W^{1/2}(\rho)\,d\rho\label{2d 41}\\
&  =C_{W}-C\varepsilon^{2\gamma}\nonumber
\end{align}
for all $0<\varepsilon<\varepsilon_{0}$. Hence,
\[
\mathcal{A}\geq-C\frac{\varepsilon^{2\gamma-1}}{|\log\varepsilon|}%
\]
for all $0<\varepsilon<\varepsilon_{0}$.

\textbf{Step 2: }We estimate $\mathcal{B}$ in (\ref{2d liminf energy}). Let
$0<\eta<\frac{1}{4}$, let $\delta_{\eta}$ be as in \eqref{2d taylor}, and let
$S_{\varepsilon,\eta}$ be the first time such that $v_{\varepsilon}%
=a+\delta_{\eta}$. By the change of variables $t=r+S_{\varepsilon,\eta}$,%
\begin{align*}
\mathcal{B}  & =\int_{-S_{\varepsilon,\eta}}^{T_{\varepsilon}-S_{\varepsilon
,\eta}}\left(  \frac{1}{\varepsilon}W(\bar{v}_{\varepsilon})+\varepsilon
(\bar{v}_{\varepsilon}^{\prime})^{2}\right)  \,dr\frac{\omega^{\prime
}(0)S_{\varepsilon,\eta}}{\varepsilon|\log\varepsilon|}\\&\quad+\int_{-S_{\varepsilon
,\eta}}^{T_{\varepsilon}-S_{\varepsilon,\eta}}\left(  \frac{1}{\varepsilon
}W(\bar{v}_{\varepsilon})+\varepsilon(\bar{v}_{\varepsilon}^{\prime}%
)^{2}\right)  r\,dr\frac{\omega^{\prime}(0)}{\varepsilon|\log\varepsilon|} \\
&  =:\mathcal{B}_{1}+\mathcal{B}_{2},
\end{align*}
where $\bar{v}_{\varepsilon}(r):=v_{\varepsilon}(r+S_{\varepsilon,\eta})$. By
the change of variables $r:=t-S_{\varepsilon,\eta}$, (\ref{2d 41}), and
(\ref{1d S epsilon}),%
\begin{align*}
\mathcal{B}_{1}  &  \geq(C_{W}-C\varepsilon^{2\gamma})\frac{\omega^{\prime
}(0)S_{\varepsilon,\eta}}{\varepsilon|\log\varepsilon|}\\
&  \geq\frac{C_{W}\omega^{\prime}(0)}{2^{1/2}(W^{\prime\prime}(a))^{1/2}%
}\left(  1-\eta\right)  -C\varepsilon^{2\gamma}-\frac{C_{\eta}}{|\log
\varepsilon|}%
\end{align*}
for all $0<\varepsilon<\varepsilon_{\eta}$.

Define%
\[
p_{\varepsilon}(s):=v_{\varepsilon}(\varepsilon s+S_{\varepsilon,\eta}).
\]
Then
\begin{align*}
\mathcal{B}_{2}  &  =\frac{\omega^{\prime}(0)}{|\log\varepsilon|}%
\int_{-\,S_{\varepsilon,\eta}\varepsilon^{-1}}^{(T_{\varepsilon}%
-S_{\varepsilon,\eta})\varepsilon^{-1}}\left(  W(p_{\varepsilon}%
(s))+(p_{\varepsilon}^{\prime}(s))^{2}\right)  s\,ds\\
&  \geq-\frac{\omega^{\prime}(0)}{|\log\varepsilon|}\int_{-\,S_{\varepsilon
,\eta}\varepsilon^{-1}}^{0}\left(  W(p_{\varepsilon}(s))+(p_{\varepsilon
}^{\prime}(s))^{2}\right)  |s|\,ds
\end{align*}
By (\ref{2d 71a}), we have that%
\[
(p_{\varepsilon}^{\prime}(s))^{2}\leq W(p_{\varepsilon}(s))+C\varepsilon.
\]
Hence, by (\ref{1d T epsilon}),%
\begin{align}
\mathcal{B}_{2}  &  \geq-\frac{\omega^{\prime}(0)}{|\log\varepsilon|}%
\int_{-\,S_{\varepsilon,\eta}\varepsilon^{-1}}^{0}2\left(  W(p_{\varepsilon
}(s))+C\varepsilon\right)  |s|\,ds\label{2d 73}\\
&  \geq-\frac{\omega^{\prime}(0)}{|\log\varepsilon|}\int_{-\,S_{\varepsilon
,\eta}\varepsilon^{-1}}^{0}2W(p_{\varepsilon}(s))|s|\,ds-C\varepsilon
|\log\varepsilon|\nonumber
\end{align}
for all $0<\varepsilon<\varepsilon_{\eta}$. By Corollary
\ref{corollary bound from below v prime} and Remark \ref{remark increasing},
\[
v_{\varepsilon}^{\prime}(t)\geq\frac{C}{\varepsilon^{1/2}}%
\]
for all $t$ such that $v_{\varepsilon}(t)\leq a+\tau_{0}\varepsilon^{1/2}$. Therefore,
\[
\varepsilon^{2}(v_{\varepsilon}^{\prime}(t))^{2}\geq C\varepsilon\geq\frac
{C}{\tau_{0}^{2}}(v_{\varepsilon}(t)-a)^{2}.
\]
Together with Theorem \ref{theorem monotonicity}, this implies that%
\[
\varepsilon v_{\varepsilon}^{\prime}(t)\geq\sigma_{0}(v_{\varepsilon}(t)-a)
\]
for all $t\geq0$ such that $v_{\varepsilon}(t)\leq c$, where $\sigma_{0}>0$.
In turn,
\[
p_{\varepsilon}^{\prime}(s)\geq\sigma_{0}(p_{\varepsilon}(s)-a)
\]
for all $-\,S_{\varepsilon,\eta}\varepsilon^{-1}\leq s\leq0$. Hence,
\[
(\log(p_{\varepsilon}(s)-a))^{\prime}=\frac{(p_{\varepsilon}(s)-a)^{\prime}%
}{p_{\varepsilon}(s)-a}\geq\sigma_{0}.
\]
Upon integration, we get%
\[
\log\frac{\delta_{\eta}}{p_{\varepsilon}(s)-a}\geq\sigma_{0}(0-s)
\]
and so%
\[
c-a\geq\delta_{\eta}\geq(p_{\varepsilon}(s)-a)e^{-s\sigma_{0}},
\]
which gives%
\[
0\leq p_{\varepsilon}(s)-a\leq(c-a)e^{\sigma_{0}s}.
\]
In turn, again by (\ref{W near a}), for $s\in\lbrack-L_{\varepsilon
}\varepsilon^{-1},0]$,%
\[
W(p_{\varepsilon}(s))\leq C(p_{\varepsilon}(s)-a)^{2}\leq Ce^{2\sigma_{0}s}.
\]
Hence,%
\[
\int_{-\,S_{\varepsilon,\eta}\varepsilon^{-1}}^{0}2W(p_{\varepsilon
}(s))|s|\,ds\leq C\int_{-\,S_{\varepsilon,\eta}\varepsilon^{-1}}^{0}%
e^{2\sigma_{0}s}|s|\,ds\leq C\int_{-\infty}^{0}e^{2\sigma_{0}s}|s|\,ds.
\]
By (\ref{2d 73}),
\[
\mathcal{B}_{2}\geq-\frac{C}{|\log\varepsilon|}-C\varepsilon|\log\varepsilon|
\]
for all $0<\varepsilon<\varepsilon_{\eta}$. .

\textbf{Step 3: }We estimate $\mathcal{C}$ in (\ref{2d liminf energy}). By
Theorem \ref{theorem 1d limsup alpha=a},%
\[
G_{\varepsilon}^{(1)}(v_{\varepsilon})\leq C_{W}\omega(0)+C_{1}\varepsilon
|\log\varepsilon|
\]
for all $0<\varepsilon<\varepsilon_{0}$. We have,%
\begin{align*}
|\mathcal{C}|  &  \leq C\varepsilon^{d}|\log\varepsilon|^{d}\int%
_{0}^{T_{\varepsilon}}\left(  \frac{1}{\varepsilon}W(v_{\varepsilon
})+\varepsilon(v_{\varepsilon}^{\prime})^{2}\right)  \,dt\\
&  \leq C\varepsilon^{d}|\log\varepsilon|^{d}\frac{1}{\min\omega}\int%
_{0}^{T_{\varepsilon}}\left(  \frac{1}{\varepsilon}W(v_{\varepsilon
})+\varepsilon(v_{\varepsilon}^{\prime})^{2}\right)  \omega\,dt\\
&  \leq C\varepsilon^{d}|\log\varepsilon|^{d}\frac{1}{\min\omega}(C_{W}%
\omega(0)+C_{1}\varepsilon|\log\varepsilon|)\leq C\varepsilon^{d}%
|\log\varepsilon|^{d}%
\end{align*}
for all $0<\varepsilon<\varepsilon_{\eta}$, where we used (\ref{1d T epsilon})
and (\ref{2d R1}).

\textbf{Step 4: }To estimate $\mathcal{D}$ in (\ref{2d liminf energy}),
observe that $\mathcal{D}\geq0$.

Combining the estimates in Steps 1-4 and using (\ref{2d liminf energy}) gives%
\[
G_{\varepsilon}^{(2)}(v_{\varepsilon})\geq\frac{C_{W}\omega^{\prime}%
(0)}{2^{1/2}(W^{\prime\prime}(a))^{1/2}}\left(  1-\eta\right)  -\frac{C_{\eta
}}{|\log\varepsilon|}%
\]
for all $0<\varepsilon<\varepsilon_{\eta}$.\hfill
\end{proof}

\section{Properties of Minimizers of $F_{\varepsilon}$}

\label{section minimizers}In this section,  we study qualitative properties of
critical points and minimizers of the functional $F_{\varepsilon}$ given in
(\ref{functional cahn-hilliard}) and subject to the Dirichlet boundary
conditions (\ref{dirichlet boundary conditions}).

\begin{theorem}
\label{theorem curvature}Assume that $\partial\Omega$ is of class $C^{2}$ and
that $g:\mathbb{R}^{N}\rightarrow\mathbb{R}$ is a function of class $C^{1}$
such that%
\[
a\leq g(x)\leq b\quad\text{for all }x\in\partial\Omega,
\]
and that there exists $x_{0}\in\partial\Omega$ such that%
\[
\kappa(x_{0})>0\quad\text{and}\quad g(x_{0})=a
\]
Then the constant function $b$ is not a minimizer of the functional
$\mathcal{F}^{(1)}$ given in \eqref{firstOrderFormalDefinition}.
\end{theorem}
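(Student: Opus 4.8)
The plan is to disprove minimality of $b$ by producing a competitor $u\in BV(\Omega;\{a,b\})$ with $\mathcal{F}^{(1)}(u)<\mathcal{F}^{(1)}(b)$: set $u=a$ on a thin region near $x_{0}$ trapped between $\partial\Omega$ and a surface lying strictly inside $\Omega$ of slightly smaller area, and $u=b$ elsewhere. After a rigid motion I would write, for small $\rho_{0}>0$, $\partial\Omega\cap B(x_{0},\rho_{0})=\{x_{N}=\psi(x')\}$ and $\Omega\cap B(x_{0},\rho_{0})=\{x_{N}>\psi(x')\}$ with $\psi\in C^{2}$, $\psi(0)=0$, $\nabla'\psi(0)=0$; by Lemma~\ref{lemma diffeomorphism} (equivalently, by computing $\det J_{\Phi}$ directly) the hypothesis $\kappa(x_{0})>0$ translates into $\Delta'\psi(0)>0$, so $\psi$ is strictly subharmonic on $B_{r}':=\{|x'|<r\}$ for every sufficiently small $r>0$.

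Fix such an $r$ and let $h$ be the harmonic function on $B_{r}'$ with $h=\psi$ on $\partial B_{r}'$. Applying the strong maximum principle to $\psi-h$ (strictly subharmonic, vanishing on $\partial B_{r}'$) gives $h>\psi$ in $B_{r}'$, so the graph $H:=\{(x',h(x')):x'\in\overline{B_{r}'}\}$ lies in $\overline{\Omega}$ and touches $\partial\Omega$ only along $\{|x'|=r\}$. Let $\Gamma:=\{(x',\psi(x')):x'\in B_{r}'\}\subset\partial\Omega$ and let $E:=\{(x',x_{N}):x'\in B_{r}',\ \psi(x')<x_{N}<h(x')\}\subset\Omega$; since the top and bottom faces of $E$ pinch together at $\{|x'|=r\}$, there is no lateral boundary of $E$ inside $\Omega$, so $\partial^{*}E\cap\Omega=H$ up to an $\mathcal{H}^{N-1}$‑null set, and $E$ has finite perimeter so $u:=b+(a-b)\chi_{E}\in BV(\Omega;\{a,b\})$, with $\operatorname{tr}u=a$ on $\Gamma$ and $\operatorname{tr}u=b$ on $\partial\Omega\setminus\overline{\Gamma}$. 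Using $\operatorname{d}_{W}(a,s)=2\int_{a}^{s}W^{1/2}$ and $\operatorname{d}_{W}(a,s)-\operatorname{d}_{W}(b,s)=4\int_{a}^{s}W^{1/2}-C_{W}$ (from \eqref{distance definition}, \eqref{cW definition}) one gets the exact identity
\[
\mathcal{F}^{(1)}(u)-\mathcal{F}^{(1)}(b)=C_{W}\bigl(\mathcal{H}^{N-1}(H)-\mathcal{H}^{N-1}(\Gamma)\bigr)+4\int_{\Gamma}\int_{a}^{g(y)}W^{1/2}(\rho)\,d\rho\,d\mathcal{H}^{N-1}(y).
\]

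For the first term, since $|\nabla'\psi|,|\nabla'h|=O(r)$ on $B_{r}'$ and $h$ minimizes the Dirichlet energy with its boundary data, expanding $\sqrt{1+t}$ yields $\mathcal{H}^{N-1}(\Gamma)-\mathcal{H}^{N-1}(H)=\tfrac12\int_{B_{r}'}|\nabla'(\psi-h)|^{2}\,dx'+O(r^{N+3})$; and because $\psi-h=\tfrac{\Delta'\psi(0)}{2(N-1)}(|x'|^{2}-r^{2})+o(r^{2})$ this equals $c_{N}\kappa(x_{0})^{2}r^{N+1}+o(r^{N+1})$ with $c_{N}>0$, in particular strictly positive. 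For the second term, $x_{0}$ is a minimum point of $g|_{\partial\Omega}$, so $\nabla_{\tau}g(x_{0})=0$ and, $g$ being $C^{1}$, $g(y)-a=o(|y-x_{0}|)$; together with $W(s)\le C(s-a)^{2}$ near $a$ (this uses $W''(a)>0$), $\int_{a}^{g(y)}W^{1/2}\le C(g(y)-a)^{2}=o(|y-x_{0}|^{2})=o(r^{2})$ uniformly on $\Gamma$, so the second term is $o(r^{2})\,\mathcal{H}^{N-1}(\Gamma)=o(r^{N+1})$. Hence $\mathcal{F}^{(1)}(u)-\mathcal{F}^{(1)}(b)=-C_{W}c_{N}\kappa(x_{0})^{2}r^{N+1}+o(r^{N+1})<0$ for $r$ small, so $b$ is not a minimizer of $\mathcal{F}^{(1)}$.

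The delicate point is the strict area gain $\mathcal{H}^{N-1}(H)<\mathcal{H}^{N-1}(\Gamma)$ with a usable lower bound: this is exactly where the sign $\kappa(x_{0})>0$ is essential, since it is what makes $\psi$ strictly subharmonic and thus places the "filling" graph $H$ inside $\Omega$ (with the reversed sign the comparison would force $H$ below $\partial\Omega$, i.e.\ outside $\Omega$). One must also check carefully that the trapped region $E$ carries no hidden lateral perimeter inside $\Omega$ (so that $P(\{u=b\};\Omega)=\mathcal{H}^{N-1}(H)$ exactly) and that the boundary‑energy error is genuinely of lower order than the curvature gain — which, beyond the quadratic behaviour of $W$ at $a$, relies crucially on $g(x_{0})=a$ being the minimum value of $g$ on $\partial\Omega$.
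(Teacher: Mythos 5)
Your proof is correct, but it takes a genuinely different route from the paper's. The paper argues by contradiction: it pushes the boundary graph inward by a bump $r^{3}\varphi_{r}$, tests the minimality inequality for $b$ against the resulting competitor, integrates by parts, and after dividing by $r^{N+2}$ recovers the first variation of area, i.e. $C_{W}\kappa(x_{0})\leq 0$, contradicting $\kappa(x_{0})>0$; the gain there is linear in $\kappa$ and of order $r^{N+2}$, so the boundary-data error must be $o(r^{N+2})$, which the paper extracts from the Taylor bound $\operatorname{d}_{W}(b,\bar g)-\operatorname{d}_{W}(a,\bar g)=C_{W}+O(|x'|^{4})$. You instead exhibit an explicit strictly better competitor by cutting the corner with the harmonic extension $h$ of $\psi|_{\partial B_{r}'}$, and you quantify the area gain as half the Dirichlet gap, $\tfrac12\int_{B_{r}'}|\nabla'(\psi-h)|^{2}\,dx'\sim c_{N}\kappa(x_{0})^{2}r^{N+1}$: quadratic in $\kappa$ but of the larger order $r^{N+1}$, so it dominates the boundary error $o(r^{N+1})$ that mere $C^{1}$ regularity of $g$ (with minimum value $a$ at $x_{0}$) provides. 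That is a genuine advantage of your scaling: with a bump of height $r^{3}$ the curvature gain is only $O(r^{N+2})$, and the $C^{1}$ hypothesis by itself gives a boundary error $o(|x'|^{2})$, i.e. $o(r^{N+1})$, not $o(r^{N+2})$; your construction sidesteps this. The exact energy identity, the absence of lateral perimeter of $E$ inside $\Omega$, and the maximum-principle argument placing the graph of $h$ in $\overline\Omega$ are all sound.

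Two points to tighten. First, to pass from the pointwise expansion $\psi-h=\tfrac{\Delta'\psi(0)}{2(N-1)}(|x'|^{2}-r^{2})+o(r^{2})$ to the asymptotics of the Dirichlet gap you need either a gradient estimate for the remainder or, more simply, the identity $\int_{B_{r}'}|\nabla'(\psi-h)|^{2}\,dx'=\int_{B_{r}'}(h-\psi)\,\Delta'\psi\,dx'$ (integrate by parts, using $\psi-h=0$ on $\partial B_{r}'$ and $\Delta'h=0$); combined with $\Delta'\psi=\Delta'\psi(0)+o(1)$ this yields $c_{N}\kappa(x_{0})^{2}r^{N+1}+o(r^{N+1})$ directly from the sup-norm expansion. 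Second, your dictionary $\kappa(x_{0})>0\iff\Delta'\psi(0)>0$ is the convention the paper's own proof uses (mean curvature in divergence form, $\kappa=\operatorname{div}_{x'}\bigl(\nabla'\psi/(1+|\nabla'\psi|^{2})^{1/2}\bigr)$, with $\Omega=\{x_{N}>\psi\}$ locally), and it is the convention under which the theorem is actually true (for a ball, corner-cutting works and the hypothesis should be satisfied); but the formula of Lemma \ref{lemma diffeomorphism}, taken literally with the inward normal, gives the opposite sign, so if you invoke that lemma you should state explicitly which sign convention you are adopting rather than calling the two computations equivalent.
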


\begin{proof}
Since the boundary of $\Omega$ is of class $C^{2}$,  without loss of generality by a translation and a
rotation we can assume that $x_{0}=0$ and that
there exist $r_{0}>0$ and a function $f:\mathbb{R}^{N-1}\rightarrow\mathbb{R}$
of class $C^{3}$ such that $f(0)=0$, $\nabla^{\prime}f(0)=0$ and%
\begin{equation}
Q(0,r_{0})\cap\Omega=\{x\in Q(0,r_{0}):\,x_{N}>f(x^{\prime})\}, \label{332}%
\end{equation}
where, with a slight abuse of notation, we are writing $x:=(x^{\prime}%
,x_{N})\in\mathbb{R}^{N-1}\times\mathbb{R}$, $Q^{\prime}(0,r):=(-r,r)^{N-1}$,
and $Q(0,r):=(-r,r)^{N}$. Let $\varphi\in C_{c}^{\infty}(Q^{\prime
}(0,1))\rightarrow\lbrack0,1]$ be such that $\int_{Q^{\prime}(0,1)}%
\varphi(y^{\prime})\,dy^{\prime}=1$. For $0<r\leq r_{0}$, define%
\[
\varphi_{r}(x^{\prime}):=\varphi(x^{\prime}/r).
\]
Consider the function $u_{0}:\Omega\rightarrow\mathbb{R}$ given by%
\[
u_{0}(x):=\left\{
\begin{array}
[c]{ll}%
a & \text{if }x\in Q(0,r)\cap\Omega\text{ and }x_{N}\leq f(x^{\prime}%
)+r^{3}\varphi_{r}(x^{\prime}),\\
b & \text{elsewhere in }\Omega\text{.}%
\end{array}
\right.
\]
Define%
\begin{align*}
\Gamma_{f}  &  :=\{(x^{\prime},f(x^{\prime})):\,x^{\prime}\in Q^{\prime
}(0,r)\},\\
\Gamma_{f+r^{3}\varphi_{r}}  &  :=\{(x^{\prime},f(x^{\prime})+r^{3}\varphi
_{r}(x^{\prime})):\,x^{\prime}\in Q^{\prime}(0,r)\},
\end{align*}
 By contradiction, assume  that $b$ is a minimizer of $\mathcal{F}^{(1)}$. Then
\begin{align*}
\mathcal{F}^{(1)}(b)  &  =\int_{\partial\Omega}\operatorname*{d}%
\nolimits_{W}(b,g)\,d\mathcal{H}^{N-1}\leq\mathcal{F}^{(1)}(u_{0}%
)=C_{W}\mathcal{H}^{N-1}(\Omega\cap\Gamma_{f+r^{3}\varphi_{r}})\\[1pt]
&  \quad+\int_{(\partial\Omega\setminus Q(0,r))\cup(\partial\Omega\cap
\Gamma_{f+r^{3}\varphi_{r}})}\operatorname*{d}\nolimits_{W}(b,g)\,d\mathcal{H}%
^{N-1}\\
&  \quad+\int_{(\partial\Omega\cap Q(0,r))\setminus\Gamma_{f+r^{3}\varphi_{r}%
}}\operatorname*{d}\nolimits_{W}(a,g)\,d\mathcal{H}^{N-1},
\end{align*}
which is equivalent to writing %
\begin{equation}
\int_{(\partial\Omega\cap Q(0,r))\setminus\Gamma_{f+r^{3}\varphi_{r}}%
}(\operatorname*{d}\nolimits_{W}(b,g)-\operatorname*{d}\nolimits_{W}%
(a,g))\,d\mathcal{H}^{N-1}\leq C_{W}\mathcal{H}^{N-1}(\Omega\cap
\Gamma_{f+r^{3}\varphi_{r}}). \label{333}%
\end{equation}
Define $\bar{g}(x^{\prime}):=g(x^{\prime},f(x^{\prime}))$, $x^{\prime}%
\in\mathbb{R}^{N-1}$. Since $\bar{g}(0)=a$ and $a\leq\bar{g}(x^{\prime})$ for
all $x^{\prime}$ small, $0$ is a point of local minimum, and so $\nabla
^{\prime}\bar{g}(0)=0$. Since $W^{1/2}(\rho)\sim(\rho-a)$ as $\rho\rightarrow
a$, by Taylor's formula applied to the function $x^{\prime}\mapsto\int%
_{a}^{\bar{g}(x^{\prime})}W^{1/2}(\rho)\,d\rho$, we can write
\begin{align*}
\operatorname*{d}\nolimits_{W}(b,\bar{g}(x^{\prime}))-\operatorname*{d}%
\nolimits_{W}(a,\bar{g}(x^{\prime}))  &  =2\int_{\bar{g}(x^{\prime})}%
^{b}W^{1/2}(\rho)\,d\rho-2\int_{a}^{\bar{g}(x^{\prime})}W^{1/2}(\rho)\,d\rho\\
&  =C_{W}-4\int_{a}^{\bar{g}(x^{\prime})}W^{1/2}(\rho)\,d\rho\\
&  =C_{W}+O(|x^{\prime}|^{4}).
\end{align*}
Then (\ref{332}) and (\ref{333}) imply
\begin{align*}
&  \int_{Q^{\prime}(0,r)\cap\{\varphi_{r}>0\}}(C_{W}+O(|x^{\prime}%
|^{4}))(1+|\nabla^{\prime}f(x^{\prime})|^{2})^{1/2}dx^{\prime}\\
&  \leq C_{W}\int_{Q^{\prime}(0,r)\cap\{\varphi_{r}>0\}}(1+|\nabla^{\prime
}f(x^{\prime})+r^{3}\nabla^{\prime}\varphi_{r}(x^{\prime})|^{2})^{1/2}%
dx^{\prime},
\end{align*}
or, equivalently,
\begin{align}
&  C_{W}\int_{Q^{\prime}(0,r)\cap\{\varphi_{r}>0\}}\left(  (1+|\nabla^{\prime
}f|^{2})^{1/2}-(1+|\nabla^{\prime}f+r^{3}\nabla^{\prime}\varphi_{r}%
|^{2})^{1/2}\right)  \,dx^{\prime}\label{334}\\
&  \leq Cr^{4}\int_{Q^{\prime}(0,r)\cap\{\varphi_{r}>0\}}(1+|\nabla^{\prime
}f|^{2})^{1/2}dx^{\prime}.\nonumber
\end{align}
Using the fact that $(1+t)^{1/2}\leq1+\frac{1}{2}t$ for $t\geq-1$,we have%
\begin{align*}
(1+|\nabla^{\prime}f+r^{3}\nabla^{\prime}\varphi_{r}|^{2})^{1/2}  &
=(1+|\nabla^{\prime}f|^{2})^{1/2}\left(  1+\frac{2r^{3}\nabla^{\prime}%
f\cdot\nabla^{\prime}\varphi_{r}}{1+|\nabla^{\prime}f|^{2}}+r^{6}\frac
{|\nabla^{\prime}\varphi_{r}|^{2}}{1+|\nabla^{\prime}f|^{2}}\right)  ^{1/2}\\
&  \leq(1+|\nabla^{\prime}f|^{2})^{1/2}+\frac{r^{3}\nabla^{\prime}f\cdot
\nabla^{\prime}\varphi_{r}}{(1+|\nabla^{\prime}f|^{2})^{1/2}}+r^{6}%
\frac{|\nabla^{\prime}\varphi_{r}|^{2}}{(1+|\nabla^{\prime}f|^{2})^{1/2}}.
\end{align*}
Hence,%
\begin{align*}
-C_{W}r^{3}  &  \int_{Q^{\prime}(0,r)\cap\{\varphi_{r}>0\}}\frac
{\nabla^{\prime}f\cdot\nabla^{\prime}\varphi_{r}}{(1+|\nabla^{\prime}%
f|^{2})^{1/2}}dx^{\prime}\\
&  \leq r^{6}C_{W}\int_{Q^{\prime}(0,r)\cap\{\varphi_{r}>0\}}\frac
{|\nabla^{\prime}\varphi_{r}|^{2}}{(1+|\nabla^{\prime}f|^{2})^{1/2}}%
dx^{\prime}+Cr^{4}\int_{Q^{\prime}(0,r)\cap\{\varphi_{r}>0\}}(1+|\nabla
^{\prime}f|^{2})^{1/2}dx^{\prime}.
\end{align*}
Integrating by parts the first integral and using the fact that $\Vert
\nabla^{\prime}\varphi_{r}\Vert_{\infty}\leq\frac{C}{r}$ gives%
\[
C_{W}r^{3}\int_{Q^{\prime}(0,r)}\operatorname{div}_{x^{\prime}}\left(
\frac{\nabla^{\prime}f}{(1+|\nabla^{\prime}f|^{2})^{1/2}}\right)  \varphi
_{r}\,dx^{\prime}\leq Cr^{N+3}%
\]
Dividing this inequality by $r^{N+2}$, and considering the change of variables
$y^{\prime}:=r^{-1}x^{\prime}$, gives%
\[
C_{W}\int_{Q^{\prime}(0,1)}\kappa(ry^{\prime})\varphi(y^{\prime})\,dy^{\prime
}\leq Cr.
\]
Letting $r\rightarrow0^{+}$ and recalling that $\int_{Q^{\prime}(0,1)}%
\varphi(y^{\prime})\,dy^{\prime}=1$, we have%
\[
C_{W}\kappa(0)\leq0,
\]
which is a contradiction.\hfill
\end{proof}

For the proof of the following theorem, we refer to
\cite[Theorem 4.9]{fonseca-kreutz-leoni2025I}.

\begin{theorem}
\label{theorem boundary estimates}Let $\Omega\subset\mathbb{R}^{N}$ be an
open, bounded, connected set with boundary of class $C^{2,d}\ $, $0<d\leq1$.
Assume that $W$ satisfies hypotheses
\eqref{W_Smooth}-\eqref{W' three zeroes} and that $g_{\varepsilon}$ satisfy
hypotheses \eqref{bounds g a=alpha},
\eqref{g epsilon smooth a}-\eqref{g epsilon -g bound a}. Suppose also that
\eqref{u0=b} holds. Let $0<\delta<<1$, then there exist $\mu>0$ and $C>0$,
independent of $\varepsilon$ and $\delta$, such that for all $\varepsilon$
sufficiently small the following estimate holds%
\begin{equation}
0\leq b-u_{\varepsilon}(x)\leq Ce^{-\mu\delta/\varepsilon}\quad\text{for }%
x\in\Omega\setminus\Omega_{2\delta}. \label{decay b}%
\end{equation}

\end{theorem}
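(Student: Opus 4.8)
The plan is to combine a maximum‑principle truncation, a Caffarelli--Cordoba--type interior clean‑up, and a barrier argument for the equation linearized at $b$; this is the analogue, in the regime where $g_\varepsilon$ may attain $a$, of \cite[Theorem 4.9]{fonseca-kreutz-leoni2025I}. First I would record two soft facts. By \eqref{WGurtin_Assumption}--\eqref{W' three zeroes}, $W$ is strictly decreasing on $(-\infty,a)$ and strictly increasing on $(b,\infty)$, so for any minimizer $u_\varepsilon$ the truncation $(u_\varepsilon\vee a)\wedge b$ is admissible (here $a\le g_\varepsilon\le b$ is used) and strictly lowers $F_\varepsilon$ unless $a\le u_\varepsilon\le b$ a.e.; by minimality and continuity, $a\le u_\varepsilon\le b$ on $\overline\Omega$, which already gives $0\le b-u_\varepsilon$. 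Next, comparing $u_\varepsilon$ with a first‑order recovery sequence for $\mathcal F^{(1)}$ at the constant $b$ and invoking the first‑order $\Gamma$‑convergence result of Owen--Rubinstein--Sternberg \cite{owen-rubinstein-sternberg1990} together with Modica--Mortola compactness \cite{modica-mortola1977}, one obtains $F_\varepsilon(u_\varepsilon)\le C\varepsilon$ and, since \eqref{u0=b} forces the $L^1$‑limit to be $b$, $u_\varepsilon\to b$ in $L^1(\Omega)$; in particular $|\{u_\varepsilon\le b-\rho\}|\to0$ for every $\rho>0$.

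Second, I would fix $\rho_0>0$ so small that $b-\rho_0>c$ and $W'(s)\le-\mu_0^{2}(b-s)$ for all $s\in[b-\rho_0,b]$ (possible because $W''(b)>0$), and upgrade the $L^1$‑smallness above to the pointwise bound $u_\varepsilon>b-\rho_0$ on $\Omega\setminus\Omega_\delta$ for all $\varepsilon$ small, the smallness threshold being allowed to depend on $\delta$. The tool here is the density estimate for minimizers of $F_\varepsilon$ in the interior of $\Omega$ in the spirit of Caffarelli--Cordoba \cite{caffarelli-cordoba1995} and Sternberg--Zumbrun \cite{sternberg-zumbrun1998}: if $u_\varepsilon(x_0)\le b-\rho_0$ at some $x_0\in\Omega\setminus\Omega_\delta$, then on a ball $B_{c_0\delta}(x_0)\subset\Omega$ with $c_0\delta\ge\varepsilon/\varepsilon_0$ (true once $\varepsilon$ is small) one would have $|\{u_\varepsilon\le b-\rho_0/2\}\cap B_{c_0\delta}(x_0)|\ge c_1\delta^{N}$, contradicting $|\{u_\varepsilon\le b-\rho_0/2\}|\to0$.

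Third, since $u_\varepsilon\in C^{2}(\Omega)$ by interior elliptic regularity and solves $2\varepsilon^{2}\Delta u_\varepsilon=W'(u_\varepsilon)$, the function $w:=b-u_\varepsilon$ satisfies on $\Omega\setminus\Omega_\delta$, by the choice of $\rho_0$,
\[
\varepsilon^{2}\Delta w=-\tfrac12 W'(b-w)\ \ge\ \tfrac{\mu_0^{2}}{2}\,w=:\mu_1^{2}w,\qquad 0<w\le\rho_0 .
\]
For $x_0$ with $\operatorname*{dist}(x_0,\partial\Omega)\ge2\delta$ the ball $B_\delta(x_0)$ is compactly contained in $\Omega\setminus\Omega_\delta$, and I would compare $w$ on $B_\delta(x_0)$ with the radial supersolution
\[
\psi(x):=\rho_0\,\frac{\Phi(|x-x_0|)}{\Phi(\delta)},\qquad \Phi(r):=r^{-(N-2)/2}\,I_{(N-2)/2}\!\bigl(\mu_1 r/\varepsilon\bigr),
\]
which solves $\varepsilon^{2}\Delta\psi=\mu_1^{2}\psi$, is radially increasing, and is finite and positive at $r=0$. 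Since $\varepsilon^{2}\Delta(w-\psi)\ge\mu_1^{2}(w-\psi)$ in $B_\delta(x_0)$ and $w-\psi\le0$ on $\partial B_\delta(x_0)$, the maximum principle for the operator $\varepsilon^{2}\Delta-\mu_1^{2}$ gives $w\le\psi$ in $B_\delta(x_0)$. The classical asymptotics of the modified Bessel function $I_{(N-2)/2}$ yield $\Phi(0)/\Phi(\delta)\le C(\delta/\varepsilon)^{(N-1)/2}e^{-\mu_1\delta/\varepsilon}$, and absorbing the polynomial factor (uniformly in $\delta,\varepsilon$) gives $0\le b-u_\varepsilon(x_0)\le C e^{-\mu\delta/\varepsilon}$ for any fixed $\mu<\mu_1$, with $C$ and $\mu$ depending only on $W$ (through $\mu_0$ and $\rho_0$) and on $N$. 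This is the asserted bound \eqref{decay b}.

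I expect the clean‑up of the second step to be the main obstacle: there one must convert the soft convergence $u_\varepsilon\to b$ in $L^1$ into a uniform pointwise estimate on the fixed set $\Omega\setminus\Omega_\delta$, and it is precisely there that the minimality of $u_\varepsilon$ (rather than mere criticality) is essential, via the interior density/monotonicity estimates of Caffarelli--Cordoba and Sternberg--Zumbrun; the truncation and the Bessel barrier are then routine elliptic facts.
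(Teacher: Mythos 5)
Your argument is correct, but there is a quirk in the comparison: this paper does not actually prove Theorem \ref{theorem boundary estimates} at all — it defers entirely to \cite[Theorem 4.9]{fonseca-kreutz-leoni2025I} — so there is no in-text proof to match against. Your self-contained route (truncation by the maximum principle to get $a\leq u_{\varepsilon}\leq b$; the energy bound $F_{\varepsilon}(u_{\varepsilon})\leq C\varepsilon$ and first-order $\Gamma$-convergence plus \eqref{u0=b} to get $u_{\varepsilon}\rightarrow b$ in $L^{1}$; the interior density estimates of \cite{caffarelli-cordoba1995}, \cite{sternberg-zumbrun1998} to upgrade this to $u_{\varepsilon}>b-\rho_{0}$ on $\Omega\setminus\Omega_{\delta}$ for $\varepsilon$ small depending on $\delta$; and the comparison of $w=b-u_{\varepsilon}$ with the exact radial solution of $\varepsilon^{2}\Delta\psi=\mu_{1}^{2}\psi$ on $B_{\delta}(x_{0})$) is exactly the technique the authors say they adopt, and each step checks out: the density estimate only uses local minimality in interior balls, so the fact that $g_{\varepsilon}$ may touch $a$ (the new feature relative to Part I) is irrelevant; the Bessel barrier genuinely solves the linear equation, so $(\varepsilon^{2}\Delta-\mu_{1}^{2})(w-\psi)\geq0$ with $w-\psi\leq0$ on $\partial B_{\delta}(x_{0})$ and the weak maximum principle (zeroth-order coefficient $-\mu_{1}^{2}\leq0$) applies; and the Bessel asymptotics give $\Phi(0)/\Phi(\delta)\leq C(\delta/\varepsilon)^{(N-1)/2}e^{-\mu_{1}\delta/\varepsilon}$, with the polynomial factor absorbed uniformly in $\delta$ and $\varepsilon$, so $C$ and $\mu$ depend only on $W$ and $N$ while only the $\varepsilon$-threshold depends on $\delta$, as the statement requires. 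Two cosmetic points: $\overline{B_{\delta}(x_{0})}$ is contained in, not compactly contained in, $\Omega\setminus\Omega_{\delta}$ for $x_{0}\in\Omega\setminus\Omega_{2\delta}$ (containment of the closed ball is all you use), and in the truncation step you should note explicitly that the gradient term does not increase under $(u_{\varepsilon}\vee a)\wedge b$ while the potential term strictly decreases on any set where $u_{\varepsilon}\notin[a,b]$.
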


\section{Second-Order $\Gamma$-Limit}

\label{section main theorems}In this section, we finally prove Theorem
\ref{theorem main}.

\begin{theorem}
[Second-Order $\Gamma$-Limsup]\label{theorem limsup}Let $\Omega\subset\mathbb{R}^{N}$
be an open, bounded, connected set with boundary of class $C^{2,d}\ $,
$0<d\leq1$. Assume that $W$ satisfies
\eqref{W_Smooth}-\eqref{W' three zeroes} and that $g_{\varepsilon}$ satisfy \eqref{bounds g a=alpha},
\eqref{g epsilon smooth a}-\eqref{g epsilon -g bound a}. Suppose also that
\eqref{u0=b} holds. Then there exists $\{u_{\varepsilon}\}_{\varepsilon}$ in
$H^{1}(\Omega)$ such that $\operatorname*{tr}u_{\varepsilon}=g_{\varepsilon}$
on $\partial\Omega$, $u_{\varepsilon}\rightarrow b$ in $L^{1}(\Omega)$, and%
\[
\limsup_{\varepsilon\rightarrow0^{+}}\mathcal{F}_{\varepsilon}^{(2)}%
(u_{\varepsilon})\leq\frac{C_{W}}{2^{1/2}(W^{\prime\prime}(a))^{1/2}}%
\int_{\partial\Omega\cap\{g=a\}}\kappa(y)\,d\mathcal{H}^{N-1}(y).
\]
Here, $\mathcal{F}^{(2)}$ is defined in \eqref{F 2 epsilon a=aplha}, $\kappa$
is the mean curvature of $\partial\Omega$, and $C_{W}$ is the constant defined
in \eqref{cW definition}.
\end{theorem}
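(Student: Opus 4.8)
The plan is to build the recovery sequence by a boundary-layer construction in normal coordinates. Fix $\delta>0$ small enough that Lemma \ref{lemma diffeomorphism} applies, and set $u_\varepsilon\equiv b$ on $\Omega\setminus\Omega_{2\delta}$ (which carries no energy). On $\Omega_{2\delta}$ use the coordinates $x=\Phi(y,t)$, $y\in\partial\Omega$, $t=\operatorname*{dist}(x,\partial\Omega)\in[0,2\delta]$: here $|\nabla t|=1$, the normal and tangential directions are orthogonal, and by \eqref{det=1}, \eqref{curvature} and the $C^{2,d}$ regularity of $\partial\Omega$,
\[
\det J_\Phi(y,t)=1+\kappa(y)\,t+R_1(y,t),\qquad |R_1(y,t)|\le C\,t^{1+d}.
\]
For $y\in\partial\Omega$ let $w_\varepsilon^y$ be the inverse of $r\mapsto\int_{g(y)}^r\varepsilon(\varepsilon+W(s))^{-1/2}\,ds$, extended by the constant $b$ for $t\ge T_\varepsilon^y:=\int_{g(y)}^b\varepsilon(\varepsilon+W(s))^{-1/2}\,ds$, which by \eqref{near b log} satisfies $T_\varepsilon^y\le C\varepsilon|\log\varepsilon|\ll\delta$; note $(w_\varepsilon^y)'=\varepsilon^{-1}(\varepsilon+W(w_\varepsilon^y))^{1/2}$ wherever $w_\varepsilon^y<b$. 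Set $\bar u_\varepsilon(\Phi(y,t)):=w_\varepsilon^y(t)$: it has trace $g$ on $\partial\Omega$, equals $b$ on the interior boundary of $\Omega_{2\delta}$, and its gradient splits into a normal part $(w_\varepsilon^y)'(t)\nabla t$ and a tangential part of modulus $\le C\,|\partial_\alpha w_\varepsilon^y(t)|\,|\nabla_\tau g(y)|$.

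The tangential energy is the crux and the main obstacle. Since \eqref{g epsilon bound derivatives a} only gives $\varepsilon\int_{\partial\Omega}|\nabla_\tau g_\varepsilon|^2=o(1)$, whereas the present normalization divides by $\varepsilon|\log\varepsilon|$, anchoring the profile at $g_\varepsilon$ itself would leave $\int_{\Omega_{2\delta}}\varepsilon|\nabla_\tau u_\varepsilon|^2$ too large to be killed. Anchoring at the (regular) limit $g$, as above, repairs this: by the change of variables $s=w_\varepsilon^y(t)$,
\[
\int_0^{T_\varepsilon^y}\big(\partial_\alpha w_\varepsilon^y(t)\big)^2\,dt=\frac{\varepsilon}{\varepsilon+W(g(y))}\int_{g(y)}^b(\varepsilon+W(s))^{1/2}\,ds\le C
\]
uniformly in $y,\varepsilon$, so $\int_{\Omega_{2\delta}}\varepsilon|\nabla_\tau\bar u_\varepsilon|^2\le C\varepsilon\int_{\partial\Omega}|\nabla_\tau g|^2\le C\varepsilon$. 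The exact boundary datum is then restored by $u_\varepsilon:=\bar u_\varepsilon+\psi_\varepsilon$, with $\psi_\varepsilon(\Phi(y,t)):=(g_\varepsilon(y)-g(y))\,\chi(t/\varepsilon^2)$ for a fixed cutoff $\chi$, $\chi(0)=1$, $\operatorname{supp}\chi\subset[0,1)$; because $|g_\varepsilon-g|\le C\varepsilon^\gamma$ with $\gamma>1$, $\psi_\varepsilon$ lives in a layer of thickness $\varepsilon^2$ where, by the two uniform bounds $\int_0^{\varepsilon^2}((w_\varepsilon^y)')^2\,dt\le C$ and $\int_0^{\varepsilon^2}(\partial_\alpha w_\varepsilon^y)^2\,dt\le C$, the smoothness of $W$ near the wells, and \eqref{g epsilon bound derivatives a}, the change it produces in $\int_\Omega(\frac1\varepsilon W(u_\varepsilon)+\varepsilon|\nabla u_\varepsilon|^2)$ — including all cross terms — is $o(\varepsilon)$. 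Hence $u_\varepsilon\in H^1(\Omega)$, $\operatorname{tr}u_\varepsilon=g_\varepsilon$, $u_\varepsilon=b$ outside a layer of thickness $\le C\varepsilon|\log\varepsilon|$ (so $u_\varepsilon\to b$ in $L^1(\Omega)$), and the tangential and $\psi_\varepsilon$ contributions to the energy are $o(\varepsilon)=o(\varepsilon|\log\varepsilon|)$.

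It remains to estimate the ``one-dimensional'' contribution
\[
\frac1{\varepsilon|\log\varepsilon|}\int_{\partial\Omega}\Big[\int_0^{2\delta}\Big(\tfrac1\varepsilon W(w_\varepsilon^y)+\varepsilon((w_\varepsilon^y)')^2\Big)\det J_\Phi(y,t)\,dt-\operatorname*{d}\nolimits_{W}(b,g(y))\Big]\,d\mathcal{H}^{N-1}(y).
\]
For fixed $y$ the bracket is exactly the quantity bounded above in the proof of Theorem \ref{theorem 1d limsup alpha=a} when $g(y)=a$, and in that of Theorem \ref{theorem 1d limsup a<alpha} when $g(y)>a$, with the weight $\omega=\det J_\Phi(y,\cdot)$ (so $\omega(0)=1$, $\omega'(0)=\kappa(y)$); those upper estimates carry over verbatim — with the obvious sign adjustment, since $\kappa(y)$ need not be positive — and are in fact cleaner here, because $w_\varepsilon^y$ reaches $b$ in finite time, so the ``outer'' term $\mathcal{D}$ vanishes and no auxiliary parameter at the well $a$ is needed. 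Decomposing $\det J_\Phi=1+\kappa(y)\,t+R_1$, using $\int_0^{T_\varepsilon^y}(\tfrac1\varepsilon W(w_\varepsilon^y)+\varepsilon((w_\varepsilon^y)')^2)\,dt=\operatorname*{d}\nolimits_{W}(g(y),b)+O(\varepsilon)$ (Proposition \ref{proposition difference}), and rescaling $t=\varepsilon s$ in the $\kappa(y)t$-term together with Proposition \ref{proposition asymptotic behavior} (resp. the exponential decay of $z_{g(y)}$), one gets, for a.e.\ $y$, that the $\varepsilon\to0^+$ limsup of the bracket divided by $\varepsilon|\log\varepsilon|$ is $\le\frac{C_W}{2^{1/2}(W^{\prime\prime}(a))^{1/2}}\kappa(y)$ if $g(y)=a$ and $\le0$ if $g(y)>a$; moreover, since $\int_0^{T_\varepsilon^y}(\tfrac1\varepsilon W(w_\varepsilon^y)+\varepsilon((w_\varepsilon^y)')^2)\,dt\le C$, $T_\varepsilon^y\le C\varepsilon|\log\varepsilon|$, $|R_1|\le Ct^{1+d}$ and $\kappa$ is bounded, the whole $\varepsilon$-family is $\le C$, uniformly in $y$. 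Reverse Fatou, together with the vanishing of the terms of the preceding paragraph, then gives
\[
\limsup_{\varepsilon\to0^+}\mathcal{F}_\varepsilon^{(2)}(u_\varepsilon)\le\int_{\partial\Omega\cap\{g=a\}}\frac{C_W}{2^{1/2}(W^{\prime\prime}(a))^{1/2}}\,\kappa(y)\,d\mathcal{H}^{N-1}(y),
\]
which is the claim. I expect the hardest step to be the tangential-energy/boundary-matching argument in the second paragraph — recognizing that the recovery profile must be anchored at the limit datum $g$, whose tangential gradient is $O(1)$ in $L^2(\partial\Omega)$, rather than at $g_\varepsilon$, and then quantifying the boundary-layer repair; it is precisely there that both $\varepsilon\int_{\partial\Omega}|\nabla_\tau g_\varepsilon|^2=o(1)$ and $|g_\varepsilon-g|\le C\varepsilon^\gamma$ with $\gamma>1$ are genuinely needed. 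The remaining work is a careful but essentially mechanical transcription of the one-dimensional limsup estimates to the weight $\det J_\Phi$.
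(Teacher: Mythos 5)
Your construction follows the paper's architecture quite closely (normal coordinates via Lemma \ref{lemma diffeomorphism}, the profile obtained as the inverse of $r\mapsto\int\varepsilon(\varepsilon+W(s))^{-1/2}ds$, the split of $\partial\Omega$ into $\{g=a\}$ and $\{g>a\}$, and the sign-adjusted use of the one-dimensional limsup estimates with weight $\det J_\Phi(y,\cdot)$, $\omega(y,0)=1$, $\partial_t\omega(y,0)=\kappa(y)$). The genuine problem is in the step you yourself single out as the crux: you anchor the profile at the limit datum $g$ and control the tangential energy by $\int_{\partial\Omega}|\nabla_\tau g|^2\le C$. The theorem does not assume $g\in H^1(\partial\Omega)$: the hypotheses \eqref{g epsilon smooth a}--\eqref{g epsilon -g bound a} only give $g_\varepsilon\in H^1(\partial\Omega)$ with $\varepsilon\int_{\partial\Omega}|\nabla_\tau g_\varepsilon|^2\,d\mathcal{H}^{N-1}=o(1)$ and $\|g_\varepsilon-g\|_\infty\le C\varepsilon^\gamma$, and these do not imply that the uniform limit $g$ has square-integrable tangential gradient (take $g$ H\"older but outside $H^1(\partial\Omega)$ and $g_\varepsilon$ a mollification at scale $\varepsilon^{\gamma/s}$: all hypotheses hold while $\|\nabla_\tau g_\varepsilon\|_{L^2}\to\infty$). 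Without that bound your $\bar u_\varepsilon$ need not even lie in $H^1(\Omega_{2\delta})$, and the tangential estimate $\int_{\Omega_{2\delta}}\varepsilon|\nabla_\tau\bar u_\varepsilon|^2\le C\varepsilon$ collapses; the subsequent trace-repair layer $\psi_\varepsilon$ is fine as a computation, but it cannot rescue the anchoring step. So, as written, the proof has a gap unless you add the hypothesis $g\in H^1(\partial\Omega)$ (with a uniform bound), which is not part of the statement.

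For comparison, the paper avoids this by anchoring the profile at $g_\varepsilon(y)$ itself (so the trace condition is automatic and no repair layer is needed) and by estimating the tangential derivative \emph{pointwise}: it asserts $|\partial v_\varepsilon/\partial\tau(y,t)|\le C|\partial g_\varepsilon/\partial\tau(y)|$ on the layer $t\in[0,T_\varepsilon(y))$ of thickness $O(\varepsilon|\log\varepsilon|)$ (see \eqref{partial v epsilon y} and \eqref{T epsilon y}), which together with \eqref{g epsilon bound derivatives a} yields the bound \eqref{299c}, $\mathcal{B}\le C\varepsilon\int_{\partial\Omega}|\nabla_\tau g_\varepsilon|^2=o(1)$. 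Your claim that the $g_\varepsilon$-anchored profile ``would leave the tangential term too large to be killed'' rests on the integrated-in-$t$ estimate only and therefore does not refute this pointwise route; you are right, however, that the ratio $(\varepsilon+W(v_\varepsilon))^{1/2}/(\varepsilon+W(g_\varepsilon(y)))^{1/2}$ is delicate precisely where $g_\varepsilon(y)$ approaches $a$ (the paper justifies its lower bound on the denominator by citing \eqref{bounds g}), so your instinct that this is the sensitive point is sound — but the fix must not smuggle in regularity of $g$ that the theorem does not grant. The remaining parts of your outline (the $O(\varepsilon)$ identification of the unweighted one-dimensional energy via Proposition \ref{proposition difference}, the $\kappa(y)t$-term treated by rescaling and Proposition \ref{proposition asymptotic behavior} giving the $\frac{C_W}{2^{1/2}(W''(a))^{1/2}}\kappa(y)$ contribution on $\{g=a\}$ and a vanishing contribution on $\{g>a\}$, the uniform bound of the bracket by $C\varepsilon|\log\varepsilon|$ and reverse Fatou, and the $L^1$ convergence to $b$) are consistent with the paper's proof and correct in substance.
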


\begin{proof}
By Lemma \ref{lemma diffeomorphism}, for $\delta>0$ sufficiently small, the
function $\Phi:\partial\Omega\times\lbrack0,\delta]\rightarrow\overline
{\Omega}_{\delta}$ is of class $C^{1\,d}$. In turn, the function%
\[
\omega(y,t):=\det J_{\Phi}(y,t)
\]
is of class $C^{1,d}$,
\begin{equation}
\omega_{1}:=\min_{y\in\partial\Omega}\omega(y,0)>0,\quad\omega(y,0)=1\quad
\text{for all }y\in\partial\Omega, \label{omega prioperties}%
\end{equation}
and%
\begin{equation}
\frac{\partial\omega}{\partial t}(y,0)=\kappa(y)\quad\text{for all }%
y\in\partial\Omega, \label{curvature 1}%
\end{equation}
where $\kappa(y)$ is the mean curvature of $\partial\Omega$ at $y$.

In view of (\ref{set g=a}), $\operatorname*{dist}(\{g=a\},\partial
\{\kappa<0\})=:\rho_{0}>0$. Let
\begin{align*}
K_{1}  &  :=\{x\in\partial\Omega:\,\operatorname*{dist}(x,\{g=a\})\geq\rho
_{0}/2\},\\
K_{2}  &  :=\{x\in\partial\Omega:\,\operatorname*{dist}(x,\{g=a\})\leq\rho
_{0}/2\}.
\end{align*}
Then
\[
\min_{K_{1}}g=:g_{-}>a.
\]
Fix
\begin{equation}
0<\omega_{0}<\frac{1}{4}\frac{C_{W}-\operatorname*{d}\nolimits_{W}(a,g_{-}%
)}{C_{W}}\omega_{1}. \label{omega0}%
\end{equation}
By taking $\delta>0$ sufficiently small, we can assume that%
\begin{equation}
|\omega(y,t_{1})-\omega(y,t_{2})|\leq\omega_{0} \label{omega uc}%
\end{equation}
for all $y\in\partial\Omega$ and all $t_{1},t_{2}\in\lbrack0,\delta]$. Since
$\omega$ is of class $C^{1,d}$ and $\kappa<0$ in $K_{2}$, by
(\ref{curvature 1}) and taking $\delta$ even smaller, we can assume that%
\begin{equation}
\frac{\partial\omega}{\partial t}(y,t)<0 \label{omega decreasing}%
\end{equation}
for all $y\in K_{2}$ and $t\in\lbrack0,\delta]$.

For each $y\in\overline{\Omega}$, define
\begin{equation}
\Psi_{\varepsilon}(y,r):=\int_{g_{\varepsilon}(y)}^{r}\frac{\varepsilon
}{(\varepsilon+W(s))^{1/2}}\,ds, \label{Psi epsilon}%
\end{equation}
and
\begin{equation}
0<T_{\varepsilon}(y):=\Psi_{\varepsilon}(y,b). \label{T epsilon y def}%
\end{equation}
Note that $T_{\varepsilon}\in C^{1}(\overline{\Omega})$,  with
\begin{equation}
T_{\varepsilon}(y)\leq\int_{a}^{b}\frac{\varepsilon}{(\varepsilon+W(s))^{1/2}%
}\,ds\leq C_{0}\varepsilon|\log\varepsilon| \label{T epsilon y}%
\end{equation}
for all $0<\varepsilon<\varepsilon_{0}$ and all $y\in\partial\Omega$ by
(\ref{near b log}), where $C_{0}>0$ and $\varepsilon_{0}>0$ depend only on $W$

For each fixed $y\in\partial\Omega$, let $v_{\varepsilon}(y,\cdot
):[0,T_{\varepsilon}(y)]\rightarrow\lbrack g_{\varepsilon}(y),b]$ be the
inverse of $\Psi_{\varepsilon}(y,\cdot)$. Then $v_{\varepsilon}\left(
y,0\right)  =g_{\varepsilon}(y)$, $v_{\varepsilon}(y,T_{\varepsilon}(y))=b$,
and
\begin{equation}
\frac{\partial v_{\varepsilon}}{\partial t}(y,t)=\frac{(\varepsilon
+W(v_{\varepsilon}\left(  y,t\right)  ))^{1/2}}{\varepsilon}
\label{partial v epsilon t}%
\end{equation}
for $t\in\lbrack0,T_{\varepsilon}(y)]$. Assume first that $g_{\varepsilon}\in
C^{1}(\partial\Omega)$. Then,  by standard results on the smooth dependence of
solutions on a parameter (see, e.g. \cite[Section 2.4]{gerald-book2012}), we
see that $v_{\varepsilon}$ is of class $C^{1}$ in the variables $(y,t)$.
Extend $v_{\varepsilon}(y,t)$ to be equal to $b$ for $t>T_{\varepsilon}(y)$.

We have
\[
v_{\varepsilon}(y,\Psi_{\varepsilon}(y,r))=r
\]
for all $g_{\varepsilon}(y)\leq r\leq b$. For every $y\in\partial\Omega$ and
every tangent vector $\tau$ to $\partial\Omega$ at $y$, differentiating in the
direction $\tau$ gives
\[
\frac{\partial v_{\varepsilon}}{\partial\tau}(y,\Psi_{\varepsilon}%
(y,r))+\frac{\partial v_{\varepsilon}}{\partial t}(y,\Psi_{\varepsilon
}(y,r))\frac{\partial\Psi_{\varepsilon}}{\partial\tau}(y,r)=0.
\]
Hence,
\[
\frac{\partial v_{\varepsilon}}{\partial\tau}(y,t)+\frac{\partial
v_{\varepsilon}}{\partial t}(y,t)\frac{\partial\Psi_{\varepsilon}}%
{\partial\tau}(y,r)=0
\]
for all $y\in\partial\Omega$ and $t\in\lbrack0,T_{\varepsilon}(y))$.

By (\ref{Psi epsilon}),
\[
\frac{\partial\Psi_{\varepsilon}}{\partial\tau}(y,r)=-\frac{\varepsilon
}{(\varepsilon+W(g_{\varepsilon}(y)))^{1/2}}\frac{\partial g_{\varepsilon}%
}{\partial\tau}(y),
\]
and so by (\ref{partial v epsilon t}), we have%
\begin{align*}
\frac{\partial v_{\varepsilon}}{\partial\tau}(y,t)  &  =-\frac{\partial
v_{\varepsilon}}{\partial t}(y,t)\frac{\partial\Psi_{\varepsilon}}%
{\partial\tau}(y,r)\\
&  =\frac{(\varepsilon+W(v_{\varepsilon}\left(  y,t\right)  ))^{1/2}%
}{(\varepsilon+W(g_{\varepsilon}(y)))^{1/2}}\frac{\partial g_{\varepsilon}%
}{\partial\tau}(y)
\end{align*}
for $t\in\lbrack0,T_{\varepsilon}(y))$, while $\frac{\partial v_{\varepsilon}%
}{\partial\tau}(y,t)=0$ for $t>T_{\varepsilon}(y)$. Observe that if
$g_{\varepsilon}(y)\geq c$, then since $W$ is decreasing for $c\leq s\leq s$
and $v_{\varepsilon}(y,\cdot)$ is increasing, we have $W(v_{\varepsilon
}\left(  y,t\right)  )\leq W(g_{\varepsilon}\left(  y\right)  )$.  Thus,
$\left\vert \frac{\partial v_{\varepsilon}}{\partial\tau}(y,t)\right\vert
\leq\left\vert \frac{\partial g_{\varepsilon}}{\partial\tau}(y)\right\vert $.
On the other hand, if $g_{\varepsilon}(y)\leq c$, then by (\ref{bounds g}),
\[
(\varepsilon+W(g_{\varepsilon}(y)))^{1/2}\geq\min_{\lbrack g_{-},c]}%
W^{1/2}=:W_{0}>0.
\]
Since $a\leq v_{\varepsilon}(y,t)\leq b$, in both cases, we have%
\begin{equation}
\left\vert \frac{\partial v_{\varepsilon}}{\partial\tau}(y,t)\right\vert
\leq\left\{
\begin{array}
[c]{ll}%
C\left\vert \frac{\partial g_{\varepsilon}}{\partial\tau}(y)\right\vert  &
\text{if }y\in\partial\Omega\text{ and }t\in\lbrack0,T_{\varepsilon}(y)),\\
0 & \text{if }y\in\partial\Omega\text{ and }t\in(T_{\varepsilon}(y),\delta].
\end{array}
\right.  \label{partial v epsilon y}%
\end{equation}
If $g_{\varepsilon}\in H^{1}(\partial\Omega)$, a density argument shows that
$v_{\varepsilon}\in H^{1}(\partial\Omega\times(0,\delta))$ and that
(\ref{partial v epsilon t}) and (\ref{partial v epsilon y}) continues to hold a.e.

Set%
\begin{equation}
u_{\varepsilon}(x):=\left\{
\begin{array}
[c]{ll}%
v_{\varepsilon}(\Phi^{-1}(x)) & \text{if }x\in\Omega_{\delta},\\
b & \text{if }x\in\Omega\setminus\Omega_{\delta},
\end{array}
\right.  . \label{u epsilon bl}%
\end{equation}
Then $u_{\varepsilon}\in H^{1}(\Omega)$, with
\begin{equation}
|\nabla u_{\varepsilon}(x)|^{2}\leq\left\vert \frac{\partial v_{\varepsilon}%
}{\partial t}(\Phi^{-1}(x))\right\vert ^{2}+C\Vert\nabla y\Vert_{L^{\infty
}(\Omega_{\delta})}^{2}\left\vert \nabla_{\tau}v_{\varepsilon}(\Phi
^{-1}(x))\right\vert ^{2}, \label{299b}%
\end{equation}
where we used the facts that $\Phi^{-1}(x)=(y(x),\operatorname*{dist}%
(x,\partial\Omega))$, $\left\vert \nabla\operatorname*{dist}(x,\partial
\Omega)\right\vert =1$, and $\tau\cdot\nabla\operatorname*{dist}%
(x,\partial\Omega)=0$ for every vector $\tau$ such that $\tau\cdot\nu(y)=0$.

In view of Lemma \ref{lemma diffeomorphism}, we can use the change of
variables $x:=\Phi(y,t)$ and Tonelli's theorem to write%
\begin{align}
\mathcal{F}_{\varepsilon}^{(2)}(u_{\varepsilon})  &  =\frac{1}{\varepsilon
|\log\varepsilon|}\int_{\partial\Omega}\int_{0}^{\delta}\left(  \frac
{1}{\varepsilon}W(u_{\varepsilon}(\Phi(y,t)))+\varepsilon|\nabla
u_{\varepsilon}(\Phi(y,t))|^{2}\right)  \omega(y,t)\,dtd\mathcal{H}%
^{N-1}(y)\nonumber\\
&  \quad-\frac{1}{\varepsilon|\log\varepsilon|}\int_{\partial\Omega
}\operatorname*{d}\nolimits_{W}(g(y),b)\,d\mathcal{H}^{N-1}%
(y)\label{F 2 epsilon}\\
&  \leq\frac{1}{\varepsilon|\log\varepsilon|}\left(  \int_{\partial\Omega}%
\int_{0}^{\delta}\left(  \frac{1}{\varepsilon}W(v_{\varepsilon}%
(y,t))+\varepsilon\left\vert \frac{\partial v_{\varepsilon}}{\partial
t}(y,t)\right\vert ^{2}\right)  \omega(y,t)\,dtd\mathcal{H}^{N-1}(y)\right.
\nonumber\\
&  \quad\left.  -\frac{1}{\varepsilon|\log\varepsilon|}\int_{\partial\Omega
}\operatorname*{d}\nolimits_{W}(g(y),b)\,d\mathcal{H}^{N-1}(y)\right)
\nonumber\\
&  \quad+\frac{C}{|\log\varepsilon|}\Vert\nabla y\Vert_{L^{\infty}%
(\Omega_{\delta})}^{2}\int_{\partial\Omega}\int_{0}^{\delta}\left\vert
\nabla_{\tau}v_{\varepsilon}(y,t)\right\vert ^{2}\omega(y,t)\,dtd\mathcal{H}%
^{N-1}(y)=:\mathcal{A}+\mathcal{B}.\nonumber
\end{align}
To estimate $\mathcal{A}$, we consider two cases.

\noindent \textbf{Case 1: $g(y)=a$.}  Fix $0<\eta<\frac{1}{4}$,  and let
$y\in\partial\Omega$ be such $g(y)=a$, then by (\ref{omega decreasing}),
$\frac{\partial\omega}{\partial t}(y,t)<0$ for all $t\in\lbrack0,\delta]$.
Thus, also by (\ref{omega prioperties}), we can apply Theorem
\ref{theorem 1d limsup alpha=a} to obtain%
\begin{align*}
&  \frac{1}{\varepsilon|\log\varepsilon|}\int_{0}^{\delta}\left(  \frac
{1}{\varepsilon}W(v_{\varepsilon}(y,t))+\varepsilon\left\vert \frac{\partial
v_{\varepsilon}}{\partial t}(y,t)\right\vert ^{2}\right)  \omega
(y,t)\,dt-\frac{C_{W}}{\varepsilon|\log\varepsilon|}\\
&  \leq(1+\eta)\frac{C_{W}}{2^{1/2}(W^{\prime\prime}(a))^{1/2}}\frac
{\partial\omega}{\partial t}(y,0)+\frac{C}{|\log\varepsilon|}%
\end{align*}
for all $0<\varepsilon<\varepsilon_{\eta}$, for some $0<\varepsilon_{\eta}<1$
depending on $\eta$, $A_{0}$, $B_{0}$, $\delta$, $\omega$, and $W$, and for
some constant $C>0$, depending on $A_{0}$, $B_{0}$, $T$, $\delta$, and $W$.
Integrating over the set $\{g=a\}$ gives%
\begin{align*}
\mathcal{A}_{1}:=\frac{1}{\varepsilon|\log\varepsilon|}  &  \int%
_{\partial\Omega\cap\{g=a\}}\int_{0}^{\delta}\left(  \frac{1}{\varepsilon
}W(v_{\varepsilon}(y,t))+\varepsilon\left\vert \frac{\partial v_{\varepsilon}%
}{\partial t}(y,t)\right\vert ^{2}\right)  \omega(y,t)\,dtd\mathcal{H}%
^{N-1}(y)\\
&  \quad-\frac{1}{\varepsilon|\log\varepsilon|}\mathcal{H}^{N-1}%
(\partial\Omega\cap\{g=a\})\\
&  \leq(1+\eta)\frac{C_{W}}{2^{1/2}(W^{\prime\prime}(a))^{1/2}}\int%
_{\partial\Omega\cap\{g=a\}}\kappa(y)\,d\mathcal{H}^{N-1}(y)\\
&  \quad+\frac{C}{|\log\varepsilon|}\mathcal{H}^{N-1}(\partial\Omega
\cap\{g=a\}).
\end{align*}
Letting $\varepsilon\rightarrow0^{+}$ gives%
\[
\limsup_{\varepsilon\rightarrow0^{+}}\mathcal{A}_{1}\leq(1+\eta)\frac{C_{W}%
}{2^{1/2}(W^{\prime\prime}(a))^{1/2}}\int_{\partial\Omega\cap\{g=a\}}%
\kappa(y)\,d\mathcal{H}^{N-1}(y).
\]

\noindent \textbf{Case 2: $g(y)>a$.} Now let $y\in\partial\Omega$ be such that $g(y)>a$. If $y\in K_{1}$, then $\omega(y,\cdot)$ satisfies
(\ref{eta 0 alpha-}) by (\ref{omega0}) and (\ref{omega uc}), while if $y\in
K_{2}$, then $\omega(y,\cdot)$ is strictly increasing in $[0,\delta]$ by
(\ref{omega decreasing}), and so it satisfies (\ref{eta 0 alpha-}) with
$\omega_{0}=0$. Thus, in both cases, also by (\ref{omega prioperties}),
given $l>0$, we can apply Remark \ref{remark limsup a<alpha} to get%
\begin{align*}
&  \frac{1}{\varepsilon|\log\varepsilon|}\int_{0}^{\delta}\left(  \frac
{1}{\varepsilon}W(v_{\varepsilon}(y,t))+\varepsilon\left\vert \frac{\partial
v_{\varepsilon}}{\partial t}(y,t)\right\vert ^{2}\right)  \omega
(y,t)\,dt-\frac{\operatorname*{d}\nolimits_{W}(g(y),b)}{\varepsilon
|\log\varepsilon|}\\
&  \leq\frac{C}{|\log\varepsilon|}+\frac{1}{|\log\varepsilon|}\int_{0}%
^{l}2W(p_{\varepsilon}(y,s))s\,ds\frac{\partial\omega}{\partial t}%
(y,0)+Ce^{-2\sigma l}\left(  2\sigma l+1\right)  \frac{1}{|\log\varepsilon|}\\
&  \quad+\frac{C\varepsilon^{2\gamma}l}{|\log\varepsilon|}+C\varepsilon
|\log\varepsilon|+C\varepsilon^{d}|\log\varepsilon|^{d}+C\frac{\varepsilon
^{2\gamma-2}}{|\log\varepsilon|}%
\end{align*}
for all $0<\varepsilon<\varepsilon_{0}$, where $p_{\varepsilon}%
(y,s):=v_{\varepsilon}(y,\varepsilon s)$ and the constants $C$ and
$\varepsilon_{0}>0$ depend on $A_{0}$, $B_{0}$, $\delta$, $\omega$, and $W$.
Since $a\leq p_{\varepsilon}\leq b$,
\[
\int_{0}^{l}2W(p_{\varepsilon}(y,s))s\,ds\leq l^{2}\max_{[a,b]}W,
\]
by integrating over $\partial\Omega\setminus\{g=a\}$ and taking $\varepsilon
_{0}$ smaller if necessary (depending on $l$), we obtain%
\begin{align*}
\mathcal{A}_{2}:=\frac{1}{\varepsilon|\log\varepsilon|}  &  \int%
_{\partial\Omega\setminus\{g=a\}}\int_{0}^{\delta}\left(  \frac{1}%
{\varepsilon}W(v_{\varepsilon}(y,t))+\varepsilon\left\vert \frac{\partial
v_{\varepsilon}}{\partial t}(y,t)\right\vert ^{2}\right)  \omega
(y,t)\,dtd\mathcal{H}^{N-1}(y)\\
&  \quad-\frac{1}{\varepsilon|\log\varepsilon|}\int_{\partial\Omega
\setminus\{g=a\}}\operatorname*{d}\nolimits_{W}(g(y),b)\,d\mathcal{H}%
^{N-1}(y)\\
&  \leq\frac{C}{|\log\varepsilon|}\mathcal{H}^{N-1}(\partial\Omega
\setminus\{g=a\})
\end{align*}
for all $0<\varepsilon_{0}<1$. Letting $\varepsilon\rightarrow0^{+}$ gives%
\[
\limsup_{\varepsilon\rightarrow0^{+}}\mathcal{A}_{2}\leq0.
\]
In conclusion, we have shown that
\begin{equation}
\limsup_{\varepsilon\rightarrow0^{+}}\mathcal{A}\leq(1+\eta)\frac{C_{W}%
}{2^{1/2}(W^{\prime\prime}(a))^{1/2}}\int_{\partial\Omega\cap\{g=a\}}%
\kappa(y)\,d\mathcal{H}^{N-1}(y) \label{299bb}%
\end{equation}
for every $0<\eta<1$. We now let $\eta\rightarrow0^{+}$.

On the other hand, by \eqref{g epsilon bound derivatives a}, (\ref{T epsilon y}), and (\ref{partial v epsilon y}),
\begin{align}
\mathcal{B}  &  \leq\frac{C}{|\log\varepsilon|}\Vert\nabla y\Vert_{L^{\infty
}(\Omega_{\delta})}^{2}\int_{\partial\Omega}\left\vert \nabla_{\tau
}g_{\varepsilon}(y)\right\vert ^{2}\int_{0}^{T_{\varepsilon}(y)}%
\omega(y,t)\,dtd\mathcal{H}^{N-1}(y)\nonumber\\
&  \leq C\varepsilon\Vert\omega\Vert_{L^{\infty}(\partial\Omega\times
\lbrack0,\delta])}\int_{\partial\Omega}\left\vert \partial_{\tau
}g_{\varepsilon}(y)\right\vert ^{2}d\mathcal{H}^{N-1}(y)=o(1).\label{299c}%
\end{align}

By (\ref{F 2 epsilon}), (\ref{299bb}), (\ref{299c}), we have%
\begin{equation}
\limsup_{\varepsilon\rightarrow0^{+}}\mathcal{F}_{\varepsilon}^{(2)}%
(u_{\varepsilon})\leq\frac{C_{W}}{2^{1/2}(W^{\prime\prime}(a))^{1/2}}%
\int_{\partial\Omega\cap\{g=a\}}\kappa(y)\,d\mathcal{H}^{N-1}(y).\nonumber
\end{equation}
\textbf{Step 2: }We claim that
\[
u_{\varepsilon}\rightarrow b\quad\text{in }L^{1}(\Omega).
\]
In view of Lemma \ref{lemma diffeomorphism}, we can use the change of
variables $x:=\Phi(y,t)$ and Tonelli's theorem to write%
\begin{align*}
\int_{\Omega}|u_{\varepsilon}-b|\,dx  &  =\int_{\partial\Omega}\int%
_{0}^{\delta}|u_{\varepsilon}(\Phi(y,t)))-b|\omega(y,t)\,dtd\mathcal{H}%
^{N-1}(y)\\
&  =\int_{\partial\Omega}\int_{0}^{T_{\varepsilon}(y)}|v_{\varepsilon
}(y,t)-b|\omega(y,t)\,dtd\mathcal{H}^{N-1}(y)\\
&  \leq C\varepsilon|\log\varepsilon|,
\end{align*}
where we used the fact that $v_{\varepsilon}(y,t)=b$ for $t\geq T_{\varepsilon
}(y)$ and (\ref{T epsilon y}).\hfill
\end{proof}

In the next proof, we use the localized energy
\[
E_{\varepsilon}(u,E):=\frac{1}{\varepsilon|\log\varepsilon|}\int_{E}\left(
\frac{1}{\varepsilon}W(u)+\varepsilon|\nabla u|^{2}\right)  \,dx,\quad u\in
H^{1}(\Omega),
\]
defined for measurable sets $E\subseteq\Omega$.

\begin{theorem}
[Second-Order $\Gamma$-Liminf]\label{theorem liminf}Let $\Omega\subset\mathbb{R}^{N}$
be an open, bounded, connected set with a boundary of class $C^{2,d}\ $,
$0<d\leq1$. Assume that $W$ satisfies
\eqref{W_Smooth}-\eqref{W' three zeroes} and that $g_{\varepsilon}$ satisfy \eqref{bounds g a=alpha},
\eqref{g epsilon smooth a}-\eqref{g epsilon -g bound a}. Suppose also that
\eqref{u0=b} holds. Then%
\[
\liminf_{\varepsilon\rightarrow0^{+}}\mathcal{F}_{\varepsilon}^{(2)}%
(u_{\varepsilon})\geq\frac{C_{W}}{2^{1/2}(W^{\prime\prime}(a))^{1/2}}%
\int_{\partial\Omega\cap\{g=a\}}\kappa(y)\,d\mathcal{H}^{N-1}(y).
\]

\end{theorem}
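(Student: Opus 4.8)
The plan is to reduce the lower bound, via a slicing argument along the inward normal lines to $\partial\Omega$, to the one-dimensional second-order $\Gamma$-$\liminf$ inequalities of Theorems \ref{theorem liminf 1d a<alpha} and \ref{theorem liminf 1d alpha=a}. First I would reduce to the case where $u_\varepsilon$ is the minimizer of $F_\varepsilon$ with trace $g_\varepsilon$: replacing any admissible sequence by such a minimizer only decreases $F_\varepsilon$, hence $\mathcal F^{(2)}_\varepsilon$, and (if $\liminf_\varepsilon\mathcal F^{(2)}_\varepsilon(u_\varepsilon)<\infty$, the only nontrivial case) compactness together with \eqref{u0=b} and Theorem \ref{theorem limsup} force convergence to $b$ in $L^1(\Omega)$. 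Fix $\delta>0$ small enough that $\Phi(y,t):=y+t\nu(y)$ is the $C^{1,d}$ diffeomorphism of $\partial\Omega\times[0,2\delta]$ onto $\overline{\Omega}_{2\delta}$ given by Lemma \ref{lemma diffeomorphism}, set $\omega(y,t):=\det J_\Phi(y,t)$ (so $\omega(y,0)=1$ and $\partial_t\omega(y,0)=\kappa(y)$ by \eqref{det=1}--\eqref{curvature}), and shrink $\delta$ further so that, exactly as in the proof of Theorem \ref{theorem limsup}, $\omega(y,\cdot)$ is strictly monotone on $[0,2\delta]$ for $y$ near $\{g=a\}$ and satisfies the weight hypotheses needed for the one-dimensional theory for $y$ with $g(y)>a$.

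By Theorem \ref{theorem boundary estimates}, $0\le b-u_\varepsilon\le Ce^{-\mu\delta/\varepsilon}$ on $\Omega\setminus\Omega_{2\delta}$, so, discarding the nonnegative bulk energy there,
\[
\mathcal F^{(2)}_\varepsilon(u_\varepsilon)\ \ge\ \frac1{\varepsilon|\log\varepsilon|}\Big(\int_{\Omega_{2\delta}}\big(\tfrac1\varepsilon W(u_\varepsilon)+\varepsilon|\nabla u_\varepsilon|^2\big)\,dx-\int_{\partial\Omega}\operatorname*{d}\nolimits_W(g(y),b)\,d\mathcal H^{N-1}(y)\Big).
\]
Changing variables by $x=\Phi(y,t)$ and invoking Tonelli's theorem, and noting that $\partial_t\Phi(y,t)=\nu(y)$ is a unit vector so the slices $w_\varepsilon(y,t):=u_\varepsilon(\Phi(y,t))$ obey $|\partial_t w_\varepsilon(y,t)|\le|\nabla u_\varepsilon(\Phi(y,t))|$, the first integral is $\ge\int_{\partial\Omega}\big[\int_0^{2\delta}(\tfrac1\varepsilon W(w_\varepsilon(y,t))+\varepsilon|\partial_t w_\varepsilon(y,t)|^2)\omega(y,t)\,dt\big]\,d\mathcal H^{N-1}(y)$. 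For a.e.\ $y$, $w_\varepsilon(y,\cdot)\in H^1(0,2\delta)$ with $w_\varepsilon(y,0)=g_\varepsilon(y)$ and $w_\varepsilon(y,2\delta)=:\beta_\varepsilon(y)$, where $|\beta_\varepsilon(y)-b|\le Ce^{-\mu\delta/\varepsilon}\le C\varepsilon^\gamma$; the inner integral is the first-order rescaled weighted one-dimensional energy $G^{(1),y}_\varepsilon$ of \eqref{G1 epsilon} (on $(0,2\delta)$, weight $\omega(y,\cdot)$, endpoints $g_\varepsilon(y),\beta_\varepsilon(y)$) evaluated at $w_\varepsilon(y,\cdot)$, which is admissible and hence dominates the value $G^{(1),y}_\varepsilon(v^{\,y}_\varepsilon)$ at the one-dimensional minimizer $v^{\,y}_\varepsilon$ of Theorem \ref{theorem 1d EL}. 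Thus
\[
\mathcal F^{(2)}_\varepsilon(u_\varepsilon)\ \ge\ \frac1{\varepsilon|\log\varepsilon|}\int_{\partial\Omega}\big(G^{(1),y}_\varepsilon(v^{\,y}_\varepsilon)-\operatorname*{d}\nolimits_W(g(y),b)\big)\,d\mathcal H^{N-1}(y),
\]
the integrand being measurable because the one-dimensional minimum depends continuously on $(y,\text{endpoints},\text{weight})$.

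I would then split $\partial\Omega$ according to $g(y)=a$ or $g(y)>a$. For $g(y)>a$ the relevant regime is the $a<\alpha$ one: by Corollary \ref{corollary 1d minimizer} the first-order minimum is $\operatorname*{d}\nolimits_W(g(y),b)\,\omega(y,0)=\operatorname*{d}\nolimits_W(g(y),b)$, and Theorem \ref{theorem liminf 1d a<alpha} (whose second-order scaling is $\varepsilon$, with constants uniform in $y$) gives $G^{(1),y}_\varepsilon(v^{\,y}_\varepsilon)-\operatorname*{d}\nolimits_W(g(y),b)\ge -C\varepsilon$, so this part contributes at least $-C\,\mathcal H^{N-1}(\partial\Omega)/|\log\varepsilon|\to0$. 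For $g(y)=a$, $\omega(y,\cdot)$ is strictly monotone with $\partial_t\omega(y,0)=\kappa(y)$, $|g_\varepsilon(y)-a|\le C\varepsilon^\gamma$, $|\beta_\varepsilon(y)-b|\le C\varepsilon^\gamma$, and the first-order minimum is $\operatorname*{d}\nolimits_W(a,b)\,\omega(y,0)=C_W$, so $G^{(1),y}_\varepsilon(v^{\,y}_\varepsilon)-C_W=\varepsilon|\log\varepsilon|\,G^{(2),y}_\varepsilon(v^{\,y}_\varepsilon)$ with $G^{(2),y}_\varepsilon$ as in \eqref{G 2 epsilon a=alpha}, and Theorem \ref{theorem liminf 1d alpha=a} yields, for every $0<\eta<\tfrac14$,
\[
G^{(2),y}_\varepsilon(v^{\,y}_\varepsilon)\ \ge\ (1-\eta)\,\frac{C_W\,\kappa(y)}{2^{1/2}(W''(a))^{1/2}}-\frac{C_\eta}{|\log\varepsilon|}\qquad(0<\varepsilon<\varepsilon_\eta),
\]
with $\varepsilon_\eta,C_\eta$ independent of $y$ since $\kappa$, the endpoint data, and $\omega(y,\cdot)$ vary in bounded families over $\partial\Omega$. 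Integrating this bound over $\partial\Omega\cap\{g=a\}$, dropping the $\{g>a\}$ part, and letting $\varepsilon\to0^+$ and then $\eta\to0^+$ gives exactly $\liminf_\varepsilon\mathcal F^{(2)}_\varepsilon(u_\varepsilon)\ge\frac{C_W}{2^{1/2}(W''(a))^{1/2}}\int_{\partial\Omega\cap\{g=a\}}\kappa\,d\mathcal H^{N-1}$.

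The step I expect to be most delicate is making the slicing reduction rigorous with constants uniform in $y$: one must check that after discarding the tangential and cross terms the normal slices are genuinely copies of the one-dimensional functional $G_\varepsilon$ with precisely the Jacobian weight $\omega(y,\cdot)$ (so that Theorems \ref{theorem liminf 1d a<alpha} and \ref{theorem liminf 1d alpha=a}, stated only for one-dimensional minimizers, apply after replacing $w_\varepsilon(y,\cdot)$ by $v^{\,y}_\varepsilon$), that the decay constants $\mu,C$, the one-dimensional constants $\varepsilon_\eta,C_\eta$, and the smallness of $\delta$ and of the weight-oscillation parameter $\omega_0$ inherited from the limsup construction are all uniform over $y\in\partial\Omega$, and in particular that for $y$ with $a<g(y)<\alpha_-$ the hypotheses of the $a<\alpha$ one-dimensional theory still hold --- which is precisely where the choices of $\delta$ and $\omega_0$ made for Theorem \ref{theorem limsup} are needed. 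The measurable dependence of $v^{\,y}_\varepsilon$ on $y$ is a minor technical point, handled by continuity of the one-dimensional minimum problem in its data.
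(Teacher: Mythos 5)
Your proposal follows essentially the same route as the paper's proof: use the exponential boundary-layer estimate of Theorem \ref{theorem boundary estimates} to reduce to a tubular neighborhood with slice endpoints exponentially close to $b$, change variables via $\Phi$ and Tonelli, discard tangential gradients, compare each normal slice with the one-dimensional minimizer $v_\varepsilon^{y}$ of the weighted functional with weight $\omega(y,\cdot)=\det J_\Phi(y,\cdot)$, and then apply Theorem \ref{theorem liminf 1d alpha=a} on $\{g=a\}$ and Theorem \ref{theorem liminf 1d a<alpha} on $\{g>a\}$ (split according to $K_1$, $K_2$) before integrating and letting $\eta\rightarrow0^{+}$. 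The uniformity and weight-hypothesis issues you flag are handled in the paper exactly by the choices of $\delta$, $\omega_{0}$, $K_1$, $K_2$ made in the proof of Theorem \ref{theorem limsup}, so your sketch is consistent with the paper's argument.
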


\begin{proof}
We define $\omega$ and $\delta>0$ as in the first part of the proof of Theorem
\ref{theorem limsup}.

By Theorem \ref{theorem boundary estimates} (with $\Omega_{\delta}$ and
$\Omega_{2\delta}$ replaced by $\Omega_{\delta/2}$ and $\Omega_{\delta}$,
respectively), we can assume that
\begin{equation}
0\leq b-u_{\varepsilon}(x)\leq Ce^{-\mu\delta/\varepsilon}\quad\text{for }%
x\in\Omega\setminus\Omega_{\delta} \label{902}%
\end{equation}
for all $0<\varepsilon<\varepsilon_{\delta}$.

Write
\begin{align*}
\mathcal{F}_{\varepsilon}^{(2)}(u_{\varepsilon})  &  =E_{\varepsilon
}(u_{\varepsilon},\Omega\setminus\Omega_{\delta})\\
&  \quad+\left(  E_{\varepsilon}(u_{\varepsilon},\Omega_{\delta})-\frac
{1}{\varepsilon|\log\varepsilon|}\int_{\partial\Omega}\operatorname*{d}%
\nolimits_{W}(g,b)\,d\mathcal{H}^{N-1}\right) \\
&  \quad=:\mathcal{A}+\mathcal{B}.
\end{align*}
Since $\mathcal{A}\geq0$, it remains to evaluate $\mathcal{B}$. In view
of Lemma \ref{lemma diffeomorphism}, we can use the change of variables
$x:=\Phi(y,t)$ and Tonelli's theorem to write%
\[
E_{\varepsilon}(u_{\varepsilon},\Omega_{\delta})=\frac{1}{\varepsilon
|\log\varepsilon|}\int_{\partial\Omega}\int_{0}^{\delta}\left(  \frac
{1}{\varepsilon}W(u_{\varepsilon}(\Phi(y,t)))+\varepsilon|\nabla
u_{\varepsilon}(\Phi(y,t))|^{2}\right)  \omega(y,t)\,dtd\mathcal{H}^{N-1}(y).
\]
Since $u_{\varepsilon}\in C^{1}(\overline{\Omega})$, if we define%
\[
\tilde{u}_{\varepsilon}(y,t):=u_{\varepsilon}(y+t\nu(y)),
\]
we have that%
\[
\frac{\partial\tilde{u}_{\varepsilon}}{\partial t}(y,t)=\frac{\partial
u_{\varepsilon}}{\partial\nu(y)}(y+t\nu(y)),
\]
and so,%
\begin{align}
\mathcal{B}  &  \geq\int_{\partial\Omega}\left[  \frac{1}{\varepsilon
|\log\varepsilon|}\int_{0}^{\delta}\left(  \frac{1}{\varepsilon}W(\tilde
{u}_{\varepsilon}(y,t))+\varepsilon\left\vert \frac{\partial\tilde
{u}_{\varepsilon}}{\partial t}(y,t)\right\vert ^{2}\right)  \omega
(y,t)\,dt\right. \label{900}\\
&  -\left.  \frac{1}{\varepsilon|\log\varepsilon|}\operatorname*{d}%
\nolimits_{W}(g(y),b)\right]  d\mathcal{H}^{N-1}(y)\nonumber
\end{align}
For $y\in\partial\Omega$, in view of (\ref{902}) we have that%
\begin{equation}
b-C_{\rho}e^{-\mu_{\rho}\delta/(2\varepsilon)}\leq\tilde{u}_{\varepsilon
}(y,\delta)\leq b. \label{903}%
\end{equation}
Let $v_{\varepsilon}^{y}\in H^{1}([0,\delta])$ be the minimizer of the
functional
\[
v\mapsto\int_{0}^{\delta}\left(  \frac{1}{\varepsilon}W(v(t))+\varepsilon
|v^{\prime}(t)|^{2}\right)  \omega(y,t)\,dt
\]
defined for all $v\in H^{1}([0,\delta])$ such that $v(0)=g_{\varepsilon}(y)$
and $v(\delta)=\tilde{u}_{\varepsilon}(y,\delta)$.

There are now two cases. If $y\in\partial\Omega$ is such $g(y)=a$, then by
(\ref{omega decreasing}), $\frac{\partial\omega}{\partial t}(y,t)<0$ for all
$t\in\lbrack0,\delta]$. Thus, also by (\ref{omega prioperties}), given
$0<\eta<\frac{1}{4}$, we can apply Theorem \ref{theorem liminf 1d alpha=a} to
obtain%
\begin{align*}
&  \frac{1}{\varepsilon|\log\varepsilon|}\int_{0}^{\delta}\left(  \frac
{1}{\varepsilon}W(\tilde{u}_{\varepsilon}(y,t))+\varepsilon\left\vert
\frac{\partial\tilde{u}_{\varepsilon}}{\partial t}(y,t)\right\vert
^{2}\right)  \omega(y,t)\,dt-\frac{C_{W}}{\varepsilon|\log\varepsilon|}\\
&  \geq\frac{1}{\varepsilon|\log\varepsilon|}\int_{0}^{\delta}\left(  \frac
{1}{\varepsilon}W(v_{\varepsilon}^{y}(t))+\varepsilon\left\vert
(v_{\varepsilon}^{y})^{\prime}(t)\right\vert ^{2}\right)  \omega
(y,t)\,dt-\frac{C_{W}}{\varepsilon|\log\varepsilon|}\\
&  \geq\left(  1-\eta\right)  \frac{C_{W}}{2^{1/2}(W^{\prime\prime}(a))^{1/2}%
}\frac{\partial\omega}{\partial t}(y,0)-\frac{C_{\eta}}{|\log\varepsilon|}%
\end{align*}
for all $0<\varepsilon<\varepsilon_{\eta}$, for some $0<\varepsilon_{\eta}<1$
and $C_{\eta}>0$ depending on $\eta$, $A_{0}$, $B_{0}$, $\delta$, $\omega$,
and $W$. Integrating over the set $\{g=a\}$ gives%
\begin{align*}
\mathcal{B}_{1}  &  :=\frac{1}{\varepsilon|\log\varepsilon|}\int%
_{\partial\Omega\cap\{g=a\}}\int_{0}^{\delta}\left(  \frac{1}{\varepsilon
}W(\tilde{u}_{\varepsilon}(y,t))+\varepsilon\left\vert \frac{\partial\tilde
{u}_{\varepsilon}}{\partial t}(y,t)\right\vert ^{2}\right)  \omega(y,t)\,dt\\
&  \quad-\frac{1}{\varepsilon|\log\varepsilon|}\mathcal{H}^{N-1}%
(\partial\Omega\cap\{g=a\})\\
&  \geq(1-\eta)\frac{C_{W}}{2^{1/2}(W^{\prime\prime}(a))^{1/2}}\int%
_{\partial\Omega\cap\{g=a\}}\kappa(y)\,d\mathcal{H}^{N-1}(y)\\
&  \quad-\frac{C_{\eta}}{|\log\varepsilon|}\mathcal{H}^{N-1}(\partial
\Omega\cap\{g=a\}).
\end{align*}
On the other hand, if $y\in\partial\Omega$ is such $g(y)>a$, then there are
two cases. If $y\in K_{1}$, then $\omega(y,\cdot)$ satisfies
(\ref{eta 0 alpha-}) by (\ref{omega0}) and (\ref{omega uc}), while if $y\in
K_{2}$, then $\omega(y,\cdot)$ is strictly increasing in $[0,\delta]$ by
(\ref{omega decreasing}), and so it satisfies (\ref{eta 0 alpha-}) with
$\omega_{0}=0$. Thus, in both cases, , also by (\ref{omega prioperties}),
given $l>0$, we can apply Theorem \ref{theorem liminf 1d a<alpha} to find
$0<\varepsilon_{0}<1$, $C>0$, and $l_{0}>1$, depending only on $g_{\pm}$, $k$,
$a$, $b$, $\delta$, $\omega$, and $W$ such that%
\begin{align*}
&  \frac{1}{\varepsilon|\log\varepsilon|}\int_{0}^{\delta}\left(  \frac
{1}{\varepsilon}W(\tilde{u}_{\varepsilon}(y,t))+\varepsilon\left\vert
\frac{\partial\tilde{u}_{\varepsilon}}{\partial t}(y,t)\right\vert
^{2}\right)  \omega(y,t)\,dt-\frac{1}{\varepsilon|\log\varepsilon
|}\operatorname*{d}\nolimits_{W}(b,g(y))\\
&  \geq\frac{1}{\varepsilon|\log\varepsilon|}\int_{0}^{\delta}\left(  \frac
{1}{\varepsilon}W(v_{\varepsilon}^{y}(t))+\varepsilon|(v_{\varepsilon}%
^{y})^{\prime}(t)|^{2}\right)  \omega(y,t)\,dt-\frac{1}{\varepsilon
|\log\varepsilon|}\operatorname*{d}\nolimits_{W}(b,g(y))\\
&  \geq\frac{2}{|\log\varepsilon|}\frac{\partial\omega}{\partial t}%
(y,0)\int_{0}^{l}W^{1/2}(w_{\varepsilon})w_{\varepsilon}^{\prime}%
s\,ds-\frac{Ce^{-l\mu}\left(  l\mu+1\right)  }{|\log\varepsilon|}%
-\frac{Cl\varepsilon^{1/2}}{|\log\varepsilon|}-C\varepsilon^{\gamma_{1}}%
|\log\varepsilon|^{1+\gamma_{0}}%
\end{align*}
for all $0<\varepsilon<\varepsilon_{0}$ and $l>l_{0}$, where $w_{\varepsilon
}(s):=v_{\varepsilon}^{y}(\varepsilon s)$ for $s\in\lbrack0,\delta
\varepsilon^{-1}]$. Since $a\leq w_{\varepsilon}\leq b$, and $\Vert
w_{\varepsilon}^{\prime}\Vert_{\infty}\leq C_{0}$, by Corollary
\ref{corollary bounded derivative},
\[
\int_{0}^{l}W^{1/2}(w_{\varepsilon})|w_{\varepsilon}^{\prime}|s\,ds\leq
l^{2}C_{0}\max_{[a,b]}W^{1/2}.
\]
Hence, by integrating over $\partial\Omega\setminus\{g=a\}$, we obtain%
\begin{align*}
\mathcal{B}_{2}:=\frac{1}{\varepsilon|\log\varepsilon|}  &  \int%
_{\partial\Omega\setminus\{g=a\}}\int_{0}^{\delta}\left(  \frac{1}%
{\varepsilon}W(\tilde{u}_{\varepsilon}(y,t))+\varepsilon\left\vert
\frac{\partial\tilde{u}_{\varepsilon}}{\partial t}(y,t)\right\vert
^{2}\right)  \omega(y,t)\,dtd\mathcal{H}^{N-1}(y)\\
&  \quad-\frac{1}{\varepsilon|\log\varepsilon|}\int_{\partial\Omega
\setminus\{g=a\}}\operatorname*{d}\nolimits_{W}(g(y),b)\,d\mathcal{H}%
^{N-1}(y)\\
&  \geq-\frac{C_{l}}{|\log\varepsilon|}\mathcal{H}^{N-1}(\partial
\Omega\setminus\{g=a\}).
\end{align*}
By combining the estimates for $\mathcal{B}_{1}$ and $\mathcal{B}_{2}$, we
have%
\begin{align*}
\mathcal{F}_{\varepsilon}^{(2)}(u_{\varepsilon})  &  \geq\mathcal{B}%
\geq(1-\eta)\frac{C_{W}}{2^{1/2}(W^{\prime\prime}(a))^{1/2}}\int%
_{\partial\Omega\cap\{g=a\}}\kappa(y)\,d\mathcal{H}^{N-1}(y)\\
&  \quad-\frac{C_{\eta}}{|\log\varepsilon|}\mathcal{H}^{N-1}(\partial
\Omega\cap\{g=a\})-\frac{C_{l}}{|\log\varepsilon|}\mathcal{H}^{N-1}%
(\partial\Omega\setminus\{g=a\}).
\end{align*}
In turn,%
\[
\liminf_{\varepsilon\rightarrow0^{+}}\mathcal{F}_{\varepsilon}^{(2)}%
(u_{\varepsilon})\geq(1-\eta)\frac{C_{W}}{2^{1/2}(W^{\prime\prime}(a))^{1/2}%
}\int_{\partial\Omega\cap\{g=a\}}\kappa(y)\,d\mathcal{H}^{N-1}(y).
\]
We conclude by letting $\eta\rightarrow0^{+}$. \hfill
\end{proof}

\section*{Acknowledgements}

The research of I. Fonseca was partially supported by the National Science
Foundation under grants Nos. DMS-2205627 and DMS-2108784 and DMS-23423490 and G. Leoni
under grant No. DMS-2108784.

G. Leoni thanks R. Murray and I. Tice for useful conversations on
the subject of this paper.

\bibliographystyle{abbrv}
\bibliography{modica-mortola-refs}

\begin{thebibliography}{10}

\bibitem{anzellotti-baldo1993}
G.~Anzellotti and S.~Baldo.
\newblock Asymptotic development by {$\Gamma$}-convergence.
\newblock {\em Appl. Math. Optim.}, 27(2):105--123, 1993.

\bibitem{anzellotti-baldo-orlandi1996}
G.~Anzellotti, S.~Baldo, and G.~Orlandi.
\newblock {$\Gamma$}-asymptotic developments, the {C}ahn-{H}illiard functional,
  and curvatures.
\newblock {\em J. Math. Anal. Appl.}, 197(3):908--924, 1996.

\bibitem{baldi2001}
A.~Baldi et~al.
\newblock Weighted bv functions.
\newblock {\em Houston J. Math}, 27(3):683--705, 2001.

\bibitem{baldo1990}
S.~Baldo.
\newblock Minimal interface criterion for phase transitions in mixtures of
  {C}ahn-{H}illiard fluids.
\newblock {\em Ann. Inst. H. Poincar\'e{} C Anal. Non Lin\'eaire}, 7(2):67--90,
  1990.

\bibitem{bellettini-nayam-novaga2015}
G.~Bellettini, A.-H. Nayam, and M.~Novaga.
\newblock {$\Gamma$}-type estimates for the one-dimensional {A}llen-{C}ahn's
  action.
\newblock {\em Asymptot. Anal.}, 94(1-2):161--185, 2015.

\bibitem{caffarelli-cordoba1995}
L.~A. Caffarelli and A.~C\'ordoba.
\newblock Uniform convergence of a singular perturbation problem.
\newblock {\em Comm. Pure Appl. Math.}, 48(1):1--12, 1995.

\bibitem{cristoferi-gravina2021}
R.~Cristoferi and G.~Gravina.
\newblock Sharp interface limit of a multi-phase transitions model under
  nonisothermal conditions.
\newblock {\em Calc. Var. Partial Differential Equations}, 60(4):Paper No. 142,
  62, 2021.

\bibitem{fonseca-kreutz-leoni2025I}
I.~Fonseca, L.~Kreutz, and G.~Leoni.
\newblock Second-order {$\Gamma$}-limit for the {C}ahn--{H}illiard functional
  with {D}irichlet boundary conditions, {I}, 2025.

\bibitem{fonseca-liu2017}
I.~Fonseca and P.~Liu.
\newblock The weighted ambrosio--tortorelli approximation scheme.
\newblock {\em SIAM Journal on Mathematical Analysis}, 49(6):4491--4520, 2017.

\bibitem{fonseca-tartar1989}
I.~Fonseca and L.~Tartar.
\newblock The gradient theory of phase transitions for systems with two
  potential wells.
\newblock {\em Proceedings of the Royal Society of Edinburgh: Section A
  Mathematics}, 111(1-2):89--102, 1989.

\bibitem{gazoulis2024}
D.~Gazoulis.
\newblock On the {$ \Gamma $}-convergence of the {A}llen-{C}ahn functional with
  boundary conditions, 2024.

\bibitem{leoni-murray2016}
G.~Leoni and R.~Murray.
\newblock Second-{O}rder {$\Gamma$}-limit for the {C}ahn--{H}illiard
  {F}unctional.
\newblock {\em Arch. Ration. Mech. Anal.}, 219(3):1383--1451, 2016.

\bibitem{leoni-murray2019}
G.~Leoni and R.~Murray.
\newblock Local minimizers and slow motion for the mass preserving
  {A}llen-{C}ahn equation in higher dimensions.
\newblock {\em Proc. Amer. Math. Soc.}, 147(12):5167--5182, 2019.

\bibitem{modica1987}
L.~Modica.
\newblock The gradient theory of phase transitions and the minimal interface
  criterion.
\newblock {\em Arch. Rational Mech. Anal.}, 98(2):123--142, 1987.

\bibitem{modica-mortola1977}
L.~Modica and S.~Mortola.
\newblock Un esempio di {$\Gamma$}-convergenza.
\newblock {\em Boll. Un. Mat. Ital. B (5)}, 14(1):285--299, 1977.

\bibitem{owen-rubinstein-sternberg1990}
N.~C. Owen, J.~Rubinstein, and P.~Sternberg.
\newblock Minimizers and gradient flows for singularly perturbed bi-stable
  potentials with a {D}irichlet condition.
\newblock {\em Proc. Roy. Soc. London Ser. A}, 429(1877):505--532, 1990.

\bibitem{sternberg1988}
P.~Sternberg.
\newblock The effect of a singular perturbation on nonconvex variational
  problems.
\newblock {\em Arch. Rational Mech. Anal.}, 101(3):209--260, 1988.

\bibitem{sternberg-zumbrun1998}
P.~Sternberg and K.~Zumbrun.
\newblock Connectivity of phase boundaries in strictly convex domains.
\newblock {\em Archive for Rational Mechanics and Analysis}, 141(4):375--400,
  1998.

\bibitem{gerald-book2012}
G.~Teschl.
\newblock {\em Ordinary differential equations and dynamical systems}, volume
  140 of {\em Graduate Studies in Mathematics}.
\newblock American Mathematical Society, Providence, RI, 2012.

\end{thebibliography}

\end{document}